\def\car#1,#2,#3,#4{
$$
   \CD
   #1           @>{}>>          #2        \\
   @V{{}}VV                  @VV{{}}V  \\
   #3        @>{{}}>>   #4
   \endCD
   $$}
\newtheorem{theorem}{Theorem}[section]
\newtheorem{lemma}[theorem]{Lemma}
\newtheorem{proposition}[theorem]{Proposition}
\newtheorem{corollary}[theorem]{Corollary}
\theoremstyle{definition}
\newtheorem{definition}[theorem]{Definition}
\newtheorem{example}[theorem]{Example}
\newtheorem{remark}[theorem]{Remark}
\begin{document}

\title[\'etale FIP extensions]{\'etale   extensions with finitely many subextensions}

\author[G. Picavet and M. Picavet]{Gabriel Picavet and Martine Picavet-L'Hermitte}
\address{Universit\'e Blaise Pascal \\
Laboratoire de Math\'ematiques\\ UMR6620 CNRS \\ Les C\'ezeaux \\ 24, avenue des Landais\\
BP 80026 \\ 63177 Aubi\`ere CEDEX \\ France}

\email{Gabriel.Picavet@math.univ-bpclermont.fr} 
\email{picavet.gm(at)wanadoo.fr}
\email{Martine.Picavet@math.univ-bpclermont.fr}

\begin{abstract}  The aim of this paper is the characterization of finite \'etale (unramified) ring extensions $R\subseteq S$ that have finitely many $R$-subalgebras.  It generalizes our earlier results on diagonal extensions of the type $R\subseteq R^n$, where $R$ is a ring. Results vary along the subextensions appearing into the canonical decomposition of an integral extension defined by the seminormalization and the $t$-closure of the extension. A special treatment is given for a subintegral  extension   $R\subseteq S$ , which is never unramified if $R\neq S$.
\end{abstract}

\subjclass[2010]{Primary:13B02, 13B21, 13B22; Secondary: 13E10, 13B30, 13B40, 05A18, 13F10}

\keywords  {SPIR,   \'etale extension, unramified extension, decomposed, inert, ramified minimal extension, flat epimorphism, FIP, FCP extension, Gilmer extension, integral, infra-integral extension, integrally closed, Artinian ring, seminormal,  subintegral, support of a module}

\maketitle

\section{A summary of the paper and some  Notation}

 All rings  considered  are commutative, nonzero and unital; all morphisms of rings are unital.  A ring is  called {\it local} (respectively, {\it  semi-local})  if it has only one maximal ideal (respectively, finitely many maximal ideals).   The  conductor of a (ring) extension $R\subseteq S$  is denoted by $(R:S)$ and the set of all $R$-subalgebras of $S$   by $[R,S]$. The extension $R\subseteq S$ is said to have FIP (for the ``finitely many intermediate algebras property") if $[R,S]$ is finite. A {\it chain} of $R$-subalgebras of $S$ is a set of elements of $[R,S]$ that are pairwise comparable with respect to inclusion. We say that the extension $R\subseteq S$ has FCP (for the``finite chain property") if each chain of $[R,S]$ is finite.  Moreover, an extension $R\subseteq S$ is dubbed {\it chained} if $[R,S]$ is a chain. 
 
 If $\mathcal M$ is a property of modules, we say that a ring extersion  $R\subseteq S$ is a $\mathcal M$ extension if  $\mathcal M$  holds for the $R$-module $S$ (for example, $\mathcal M$ may be the finite, free, projective, finite presentation properties). An extension  $R \subseteq S$ is said of finite type (finite presentation) if the $R$-algebra $S$ is of finite type (finite presentation). Epimorphisms are here   epimorphisms of the category of commutative rings. For example, $R\to R_\Sigma$ is a flat epimorphism for any multiplicatively closed subset $\Sigma$ of $R$.
 
  We say that a property $\mathcal P$ of ring morphisms $f: R\to S$ is {\it local } if $ R_P\to S_Q$ verifies $\mathcal P$  for each $Q\in \mathrm{Spec}(S)$ and $P:= f^{-1}(Q)$. This definition should not be confused with a ring morphism verifying locally a property, that is $R_P\subseteq S_P$ verifies $\mathcal P$ for each $P\in \mathrm{Spec}(R)$. If $R$  is a ring and $P\in \mathrm{Spec}(R)$, we denote by $\kappa (P)$ the residual field  $R_P/PR_P$ (of $R$ at $P$).  For a ring extension $R\subseteq S$, $Q\in \mathrm{Spec}(S)$ and $P:= Q\cap R$, there is a residual field extension $\kappa(P) \subseteq \kappa(Q)$.

  Clearly, an extension $R\subseteq S$ has FIP if and only if $R/(R:S) \subseteq S/(R:S)$ has FIP. So  we can assume in many case that the conductor is $0$. Moreover, recall that an integral extension $R\subseteq S$ has FCP if and only if $R/(R:S)$ is an Artinian ring and $R \subseteq  S$ is  finite \cite[Proposition 4.2]{DPP2}. Therefore, we can often assume that  $R$ is Artinian. The characterization of  integral extensions  that have FCP, first  appeared in the seminal R. Gilmer's paper \cite{G} on integral domains. As this condition is ubiquitous in this paper, we dub these extensions $R\subseteq S$ as follows, where $R/(R:S)$ is called the {\it nucleus} of $R \subseteq S$.
  
  \begin{definition}\label{defG} An extension $R\subseteq S$ is called a {\it Gilmer extension} if its nucleus is an Artinian ring.  Therefore, an integral  extension, having FIP, is a finite Gilmer  extension.
 \end{definition}
  It is easy to show that if $R\subseteq S$ is a finite Gilmer extension and $T\in [R,S]$, then $R\subseteq T$ and $T\subseteq S$ are finite Gilmer extensions.

 We will show that an unramified  Gilmer extension is finite, so that we are able to replace  integrality hypotheses with   finiteness assumptions.
  
 We now sum up our main results, using standard notation and definitions. Nevertheless, the reader is advised that we will precise them  later in this section. In an earlier paper, we gave a characterization  of {\it diagonal ring extensions }$R \subset R^n$ having FIP, $n> 0$ an integer,  involving  
 rings with finitely many ideals (see Theorem~\ref{PROD}). Such ring extensions are \'etale and finite and  actually  are trivial \'etale covering. Separable finite field extensions are also \'etale. They are known to have FIP because they are monogenic.  Monogenic properties are beyond the scope of this paper. Actually,  we characterized arbitrary FCP and FIP extensions  in \cite{DPP2}, a joint paper by D. E. Dobbs and ourselves, whose proofs do not need some monogenic conditions.  
  Another  point of view is given by  the so-called {\it separable} algebras explored in \cite{DMI}. We will not use the terminology of \cite{DMI}, because it may cause some confusion. Moreover in the context  of the paper, separable ring extensions identify to {\it \'etale} algebras. These observations motivated us to examine  integral  \'etale extensions that have FIP. Actually, we will  also look at integral unramified extensions, because on one hand the unramified property is sufficient to get some results, and on the other hand,  the flatness condition acts on the nullity of the conductor. Note also that we have a surprising converse:   if $R\subset S$ is a seminormal and infra-integral finite Gilmer extension, then $R\subset S$ is unramified and has FIP (see Proposition~\ref{5.5}).

 As in some  of our earlier papers, our  strategy  will consist to use the canonical decomposition of  an integral ring extension. At this stage some recalls  about seminormality and $t$-closedness are needed. Following \cite{Pic 2} and \cite{S}, an  extension  $ R\subseteq S$  is termed:
\begin{enumerate}
\item  {\it infra-integral}  if $R\subseteq S$ is integral and  all its residual extensions  are isomorphisms.  

\item  {\it subintegral} if $R\subseteq S$ is infra-integral and spectrally bijective. 

\item  {\it seminormal} if the relations $b\in S,\ b^2,b^3\in R \Rightarrow b\in R$. 
 \item {\it $t$-closed} if for all  $b\in S, r\in R$, $(b^2-rb,b^3-rb^2\in R )\Rightarrow b\in R$. 
 \end{enumerate}
  The {\it seminormalization}  $\substack{+\\ S}R$    of $R$ in $S$  is the greatest subalgebra $B\in[R,S]$ such that $R\subseteq B$ is  subintegral and the smallest subalgebra $C\in [R,S]$ such that $C\subseteq S$ is seminormal. 
 
   The {\it $t$-closure}  $\substack{t\\ S}R$    of $R$ in $S$  is the greatest subalgebra $B\in[R,S]$ such that $R\subseteq B$ is  infra-integral and the smallest subalgebra $C\in [R,S]$ such that $C\subseteq S$ is $t$-closed.

For an integral extension $R\subseteq S$, the tower $R\subseteq \substack{+\\ S}R\subseteq \substack{t\\ S}R \subseteq S$ is called the {\it canonical decomposition} of $R\subseteq S$.

Seminormalizations  and $t$-closures  commute with the formation of  localizations at arbitrary multiplicatively closed subsets   \cite[Proposition 2.9]{S} and \cite[Proposition 3.6]{Pic 1}. If $R\subset S$ is seminormal, $(R:S)$ is a radical ideal of $S$. 

It  is now worth noticing that there are integral extensions having FIP, which  are not unramified.  We studied  in \cite{Pic 5}  idealizations having FIP. Let $R$ be a ring and $M$ an $R$-module, then the {\it idealization} of $M$ is the extension $R\to R\times M$ defined by  $r\mapsto (r,0)$, while the multiplication on $R\times M$ is defined by $(r,m)(s,n)= (rs,rn+sm)$. We proved that an idealization of an $R$-module $M$ has FIP if $M$ has finitely many submodules.  Such an extension is subintegral but is never unramified (see Remark~\ref{NOUN}(2)).

In Section 2  we give  recalls and technical results needed for our theory. In particular, we introduce minimal extensions  and their relations with the FIP and FCP properties.  A crucial result is that a quasi-finite (for example unramified) Gilmer extension is  finite. We also introduce some special rings like SPIRs and rings with finitely many ideals. For a ring extension $R \subseteq S$ with finite maximal support, we define the localization  $R_{(R:S)}\subseteq S_{(R:S)}$ with respect to $(R:S)$, where $R_{(R:S)}$ is a semi-local ring. This localization encodes many properties  of $R\subseteq S$, while its conductor is zero when $R\subseteq S$ is finite and flat, for example  finite \'etale.

Section 3 develops global results, apart some localization results about \'etale Gilmer extensions. See Theorem~\ref{2.4} and Corollary~\ref{2.5}, where we prove in particular that a Gilmer \'etale extension is locally a trivial \'etale cover. Our first result shows that a $t$-closed unramified finite extension $R\subset S$  has FIP if and only if $R\subset S$ is a Gilmer extension. Many results ensue from the fact that some \'etale covers  become (locally) trivial after a base change.  It is then enough to use our earlier results on trivial \'etale covers in \cite{Pic 4}. For example, an \'etale extension has FIP when its domain is a FMIR and the extension verifies some rank properties (see Theorem~\ref{2.9}). Also an unramified Gilmer extension $R\subset S$ has FIP, when its nucleus is a  reduced ring. We will see that the subintegral part $R\subseteq  \substack{+\\ S}R$ is never unramified, even if $R\subset S$ is unramified. This fact introduces some real difficulties that we already met  in \cite{DPP3} and \cite{Pic 6}.  One main result is  that if $R\subseteq S$ is an \'etale extension, whose domain has only finitely many ideals, then  $R\subset S$ has FIP, in case  $\mathrm{rk}_P(S_P) \leq 2$ for each  $P\in \mathrm{Supp}(S/R)$, such that $R_P$ is a SPIR.  Naturally, the converse  does not hold, as the following caricatured example shows. Whatever may be  a ring $R$, the extension $R\subseteq R$ is \'etale and has FIP!
The above result generalizes Theorem~\ref{PROD} gotten for diagonal extensions.

In Section 4, we suppose that $R$ is a local ring in order to study the subintegral part
of a finite extension $R \subset S$. Indeed this part is never unramified even if $R\subseteq S$ is \'etale finite. So this section may seem to be disconnected from the subject of the paper but is here to give a description as complete as possible.  Actually we show that a subintegral FIP extension $R\subset T$  over a local ring  can  be immersed in  an \'etale extension, necessarily of the type $R\subset R^n $ with $R$ non-reduced if and only if $T$ verifies some conditions (see Proposition~\ref{4.42}).  It remains that we are not able to give a characterization of  the subintegral part of a finite unramified (\'etale) extension. As a by-product we get that the FIP property can be reduced to local subextensions when $R$ is  Henselian (Artinian). Under additional (and necessary) hypotheses we get that a subintegral ring extension has FIP if and only if it is chained. A significative consequence is as follows. Let $R\subset S$ be a finite \'etale  local  Gilmer extension which is not seminormal  and whose nucleus is an infinite Artinian ring.  Then $R\subset S$ has FIP if and only if $R\subset \substack{+\\ S}R$ is chained (see Proposition~\ref{4.10}).  

Section 5 is concerned with seminormal extensions. We show that a finite seminormal and infra-integral Gilmer extension $R\subset S$  is unramified and has FIP. This  case corresponds to the part $\substack{+\\ S}R \subseteq \substack{t\\ S}R$ of the canonical decomposition. The behavior of   $\substack{t\\ S}R \subset S$ is elucidated  in Section 3. 
The above considerations show that the seminormal case is much more agreeable and gives rise to  results different from the previous sections. We also  get that $R\subset S$ has FIP and is integral seminormal if and only if its nucleus is Artinian and reduced, among other results characterizing seminormal finite FIP extensions.

\section{Recalls and results needed in the sequel}

\subsection{Unramified and \'etale algebras}
 We recall some material about \'etale algebras.
 A ring morphism $R \to S$ is called  {\it unramified, (net in French, with the Raynaud's definition \cite{R})}  if $R\to S$ is of finite type and its $S$-module of {\it K\"ahler}  differentials $\Omega (S|R) = 0$. An $R$-algebra $S$ of finite type is unramified if and only if the Property UN holds (see  \cite[Exercices p. 38]{R}(1)).
 
  {\bf UN}:   $PS_Q = QS_Q$ and $\kappa (P) \to \kappa (Q)$ is a finite   and separable field extension for all $Q\in \mathrm{Spec}(S)$ and  $P:= Q\cap R$.

 We will use the following definition. A ring  morphism $R\to S$ is  called {\it \'etale}  if and only if it  is flat of finite presentation and unramified \cite[Corollaire 1 p.55]{R}. The  \'etale and unramified properties are universal, that is, stable under any base change.
 
 A ring morphism $R\to S$ is called an {\it \'etale cover} if $R\to S$ is \'etale and finite. An \'etale cover is called {\it trivial} if it is (isomorphic to) a diagonal extension  of the form $R\subset R^n$ for some integer $n>0$  \cite[Remarque 18.2.7]{EGAIV}.
 
 \begin{proposition}\label{DUN} \cite[Proposition 17.3.3(v)]{EGAIV} Let $R\subset S$ be an unramified  extension and $T\in [R,S]$, then $T\subseteq S$ is unramified.
 \end{proposition}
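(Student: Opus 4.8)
The plan is to verify directly the two defining conditions of unramifiedness for the map $T \to S$: that it is of finite type and that $\Omega(S|T) = 0$. The finite type part is immediate. Since $R \subseteq S$ is unramified it is in particular of finite type, so $S$ is generated as an $R$-algebra by finitely many elements $s_1,\dots,s_m$; as $R \subseteq T$, these same elements generate $S$ as a $T$-algebra, whence $T \subseteq S$ is of finite type.

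For the vanishing of the differentials I would invoke the first fundamental (conormal) exact sequence attached to the tower $R \to T \to S$, namely the exact sequence of $S$-modules
$$\Omega(T|R) \otimes_T S \longrightarrow \Omega(S|R) \longrightarrow \Omega(S|T) \longrightarrow 0.$$
Since $R \subseteq S$ is unramified we have $\Omega(S|R) = 0$, and surjectivity of the right-hand map then forces $\Omega(S|T) = 0$. Combined with finite type, this yields that $T \subseteq S$ is unramified, which is exactly the claim.

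An alternative, more hands-on route is to check Property UN directly. Fix $Q \in \mathrm{Spec}(S)$ and set $P' := Q \cap T$ and $P := Q \cap R = P' \cap R$. Because $P \subseteq P' \subseteq Q$, passing to $S_Q$ gives $QS_Q = PS_Q \subseteq P'S_Q \subseteq QS_Q$, where the first equality is the UN condition for $R \subseteq S$; hence $P'S_Q = QS_Q$. For the residual extensions one uses the tower $\kappa(P) \subseteq \kappa(P') \subseteq \kappa(Q)$: since $\kappa(P) \subseteq \kappa(Q)$ is finite and separable by UN for $R \subseteq S$, so is the intermediate extension $\kappa(P') \subseteq \kappa(Q)$ (finiteness is clear, and any element of $\kappa(Q)$ separable over $\kappa(P)$ is a fortiori separable over the larger field $\kappa(P')$). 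This establishes UN for $T \subseteq S$.

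There is no serious obstacle here; the only points requiring care are the bookkeeping of the three primes $P, P', Q$ and their residue fields in the UN approach, and the (standard) exactness of the fundamental sequence in the differentials approach. I would favour the differentials argument for its brevity, since the paper's definition of unramified is phrased directly in terms of $\Omega(S|R)$ together with finite type.
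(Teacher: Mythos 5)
Your proof is correct on both routes. Note that the paper itself offers no argument for this statement at all: it is quoted directly from EGA IV, Proposition 17.3.3(v), so there is no internal proof to compare against. Your differentials argument (finite type passes from $R\subseteq S$ to $T\subseteq S$ because the same algebra generators work, and $\Omega(S|T)$ is a quotient of $\Omega(S|R)=0$ via the first fundamental exact sequence for $R\to T\to S$) is essentially the standard proof underlying the cited EGA result, and it matches the paper's chosen definition of unramified (finite type plus vanishing K\"ahler differentials), so it splices cleanly into this paper's framework. Your second, UN-based argument is also sound — the prime bookkeeping $P\subseteq P'\subseteq Q$ gives $QS_Q=PS_Q\subseteq P'S_Q\subseteq QS_Q$, and separability and finiteness of $\kappa(P')\subseteq\kappa(Q)$ follow from those of $\kappa(P)\subseteq\kappa(Q)$ — but it silently relies on the equivalence between unramifiedness and Property UN, which the paper states only for finite type algebras; you do establish finite type first, so this is fine, but the differentials route is the cleaner of the two and the one to keep.
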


 We note here for later use the following result.
 \begin{lemma}\label{Omega} Let $R\subset S$ be an  extension, such that $R$ and $S$ share an ideal  $I$. Then  $\Omega (S|R)$ is isomorphic to the $S$-module $\Omega ((S/I)|(R/I))$, gotten  via $S\to S/I$.  
\end{lemma}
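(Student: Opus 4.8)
The plan is to exploit the two standard functorial properties of Kähler differentials: behaviour under base change and the description via the universal derivation. First I would record the hypothesis correctly. Since $I$ is a common ideal, $I$ is an ideal of $S$ contained in $R$, so that $IS = I$; consequently $S/I = S/IS = S \otimes_R (R/I)$, and $R/I \subseteq S/I$ is again a ring extension.

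The crucial step is to show that the $S$-module $\Omega(S|R)$ is annihilated by $I$, hence is already an $S/I$-module. For this I would use that $\Omega(S|R)$ is generated over $S$ by the symbols $ds$, $s \in S$, and compute, for $a \in I$ and $s \in S$, that $a\,ds = d(as) - s\,da$. Here $da = 0$ because $a \in I \subseteq R$, and $d(as) = 0$ because $as \in I$ (as $I$ is an ideal of $S$) and $I \subseteq R$, whence $as \in R$. Thus $a\,ds = 0$, so $I\,\Omega(S|R) = 0$.

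With this in hand, I would finish by base change. From the standard isomorphism $\Omega(S \otimes_R (R/I) \mid R/I) \cong \Omega(S|R) \otimes_S \bigl(S \otimes_R (R/I)\bigr)$, that is $\Omega((S/I)|(R/I)) \cong \Omega(S|R) \otimes_S (S/I)$ as $S$-modules, and since $I$ kills $\Omega(S|R)$, the canonical surjection $\Omega(S|R) \to \Omega(S|R) \otimes_S (S/I)$ is an isomorphism; composing yields $\Omega(S|R) \cong \Omega((S/I)|(R/I))$. A more self-contained route gives the same conclusion: because $d \colon S \to \Omega(S|R)$ kills $I$, it factors as an $(R/I)$-derivation $\bar d \colon S/I \to \Omega(S|R)$, which induces a map $\Omega((S/I)|(R/I)) \to \Omega(S|R)$; conversely the composite $S \to S/I \xrightarrow{\,d'\,} \Omega((S/I)|(R/I))$ is an $R$-derivation and induces the inverse, so the two modules are isomorphic.

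I expect the only genuine subtlety to be the annihilation step, namely checking that $as \in R$, which is precisely where the hypothesis that $I$ is an ideal of $S$ (and not merely of $R$) is used; everything else is formal.
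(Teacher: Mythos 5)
Your proof is correct and takes essentially the same route as the paper's: both arguments show that $I$ annihilates $\Omega(S|R)$, so that it is already an $(S/I)$-module, and then conclude via the base change isomorphism $\Omega((S/I)|(R/I)) \cong \Omega(S|R)\otimes_S (S/I)$. The only cosmetic difference is that the paper checks the annihilation through the presentation $\Omega(S|R)=J/J^2$ with $J=\ker(S\otimes_R S\to S)$, proving $IJ=0$, whereas you use the Leibniz rule on the universal derivation; these are interchangeable verifications of the same key fact.
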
 
\begin{proof} First observe that $\Omega (S|R) = J/J^2$ where $J=\ker(S\otimes_RS \to S)$ is generated over $S$ by the elements $s\otimes 1 -1\otimes s$, with $s$ in $S$. It is clear that $IJ= 0$; so that $J/J^2$ is an ($S/I$)-module. We thus get $\Omega (S|R) \cong \Omega (S|R)\otimes_{(S/I)}(S/I) \cong \Omega (S|R)\otimes_S(S/I) \cong \Omega((S/I)|(R/I))$  by \cite[Proposition 5, p.27]{R}. 
\end{proof}

 \subsection{Some finiteness results}
  Next  result is crucial.

  \begin{proposition}\label{SE} \cite[Proposition 2, p.6-02]{O} Let $R\to S$ be a flat epimorphism where  $R$ is zero-dimensional. Then $R\to S$ has the property $\mathbf{SE}$: $R\to S$ is surjective.
\end{proposition}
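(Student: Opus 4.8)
The plan is to reduce to the local case and then exploit faithful flatness, avoiding any appeal to Nakayama (which will be unavailable here). To show that $R\to S$ is surjective it suffices to show that the cokernel $C$ of $R\to S$, regarded as an $R$-module, vanishes, and $C=0$ can be tested after localizing at each $P\in\mathrm{Spec}(R)$, since $C_P=\mathrm{coker}(R_P\to S_P)$ where $S_P:=S\otimes_R R_P$. Localization is a base change along $R\to R_P$, and both flatness and the epimorphism property are stable under base change; hence $R_P\to S_P$ is again a flat epimorphism, with $R_P$ local and zero-dimensional. So I would reduce to proving the statement for a flat epimorphism $R\to S$ with $(R,\mathfrak m)$ local and zero-dimensional (allowing $S=0$, which may occur after localization and is trivial).

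Assume then that $(R,\mathfrak m)$ is local and zero-dimensional and $S\neq 0$. This is where the hypothesis is used: because $R$ is local and zero-dimensional, $\mathfrak m$ is its only prime, so every prime of $S$ contracts to $\mathfrak m$. If $\mathfrak m S=S$, the fibre $S\otimes_R\kappa(\mathfrak m)=S/\mathfrak m S$ would be zero, i.e. no prime of $S$ would lie over $\mathfrak m$; but every prime lies over $\mathfrak m$, so $\mathrm{Spec}(S)$ would be empty and $S=0$, a contradiction. Hence $\mathfrak m S\neq S$, and combined with flatness of $S$ over the local ring $R$ this forces $S$ to be faithfully flat over $R$.

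Finally I would invoke the principle that a faithfully flat epimorphism is an isomorphism, hence surjective. Concretely, faithfully flat descent gives an exact sequence $0\to R\to S\xrightarrow{d}S\otimes_R S$ with $d(s)=s\otimes 1-1\otimes s$; but $R\to S$ being an epimorphism is precisely the assertion that the two coprojections $S\rightrightarrows S\otimes_R S$ coincide, i.e. $d=0$, so $R\xrightarrow{\sim}\ker(d)=S$. Descending back through the localizations yields $C_P=0$ for every $P$, whence $C=0$ and $R\to S$ is surjective.

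The main obstacle to anticipate is that $R$ need not be Noetherian, so in the local case $\mathfrak m$ may fail to be nilpotent and the naive argument ``$S=R+\mathfrak m S\Rightarrow S=R$'' is not available; this is exactly why I route the argument through faithful flatness and the descent identity rather than through nilpotency. It is also worth stressing that zero-dimensionality is indispensable and is used at precisely the step where an empty fibre over $\mathfrak m$ forces $S=0$: for a DVR $R$ with fraction field $K$, the map $R\to K$ is a flat (indeed localizing) epimorphism that is \emph{not} surjective, and the argument breaks down because $\mathrm{Spec}(K)$ sits over the generic point instead of over $\mathfrak m$.
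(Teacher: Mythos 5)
Your proof is correct, but there is nothing in the paper to compare it against: the paper does not prove Proposition~\ref{SE} at all, it simply imports it from Olivier \cite[Proposition 2, p.6-02]{O} and uses it as a black box (notably in the proof of Proposition~\ref{GUF} and in Theorem~\ref{2.4}(6)). Your argument supplies a complete, self-contained demonstration along the classical lines: surjectivity of $R\to S$ is detected by the vanishing of the cokernel, which localizes, and flat epimorphisms are stable under the base change $R\to R_P$ (flatness by base change, the epimorphism property because tensoring is a pushout), so one may assume $(R,\mathfrak m)$ local and zero-dimensional; then zero-dimensionality forces every prime of $S$ to lie over $\mathfrak m$, so if $S\neq 0$ the fibre $S/\mathfrak m S$ is nonzero, and for a flat module over a local ring nonvanishing of the closed fibre is exactly the criterion for faithful flatness; finally, a faithfully flat epimorphism is an isomorphism, which you derive correctly by combining exactness of the Amitsur sequence $0\to R\to S\to S\otimes_R S$ (faithfully flat descent) with the characterization of ring epimorphisms by $s\otimes 1=1\otimes s$ in $S\otimes_R S$. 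All three steps are sound, the degenerate case $S_P=0$ is handled, and your DVR example $R\to K$ correctly isolates where zero-dimensionality is indispensable. What the citation buys the paper is brevity; what your argument buys is independence from \cite{O}, at the modest cost of re-proving the Lazard-type lemma that a faithfully flat epimorphism is an isomorphism, which is in any case the heart of the matter.
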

 
The following result is a strengthening of a well-known result \cite[Proposition 18.3.1]{EGAIV}. It provides a connexion with the work of \cite{DMI}.
 
 \begin{lemma}\label{Seppro} Let $R\subseteq S$ be a finite  extension.
 \begin{enumerate}
 
 \item  $R\subseteq S$ is of finite presentation if $R\subseteq S$ is projective.
 
   \item  $R\subseteq S$  and $S\otimes_RS \to S$ are projective if and only if $R\subseteq S$ is projective and unramified, and if and only  if $R\subseteq S$ is \'etale. 
   
   \item If $R\subseteq S$ is  \'etale and $R\subseteq T$ is an intermediary unramified extension, then $R\subseteq T$ and $T\subseteq S$ are \'etale,  projective and finite.
   \end{enumerate}
\end{lemma}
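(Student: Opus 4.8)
The plan is to treat the three parts in order, the common tool being the identification $\Omega(S|R) = J/J^2$ with $J = \ker(S\otimes_R S \xrightarrow{\mu} S)$ recorded in Lemma~\ref{Omega}, together with the classical theory of the separability idempotent and the comparison between module- and algebra-finiteness.

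For (1), a finite extension means that $S$ is a finitely generated $R$-module; a finitely generated projective module is a direct summand of a finite free module, hence finitely presented as a module. It then remains to promote finite presentation as a module to finite presentation as an $R$-algebra, which holds for any module-finite algebra (the standard companion to \cite[Proposition 18.3.1]{EGAIV}): writing $S = R[x_1,\dots,x_n]/\mathfrak a$ with the $x_i$ a module-generating family, a finite module presentation produces finitely many algebra relations generating $\mathfrak a$.

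For (2), the key is the separability idempotent. If $\mu$ and $R\subseteq S$ are projective, the exact sequence $0\to J\to S\otimes_R S\to S\to 0$ splits over $S\otimes_R S$, so $J$ is a direct summand, hence generated by an idempotent; thus $J=J^2$ and Lemma~\ref{Omega} gives $\Omega(S|R)=J/J^2=0$, i.e. $R\subseteq S$ is unramified, which is $(a)\Rightarrow(b)$. Conversely, if $R\subseteq S$ is projective and unramified, then $J=J^2$; since $S$ is module-finite over $R$, $J$ is finitely generated (by the $s_i\otimes 1-1\otimes s_i$ for a finite generating family), and a finitely generated idempotent ideal is generated by an idempotent $e$ via the determinant trick, whence $S\cong (1-e)(S\otimes_R S)$ is $(S\otimes_R S)$-projective; this is $(b)\Rightarrow(a)$. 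Finally, for a module-finite extension, ``projective over $R$'' is equivalent to ``flat and of finite presentation as an $R$-algebra'': one direction is part (1), the other combines the algebra/module finite-presentation comparison for finite algebras with ``flat $+$ finitely presented $=$ projective''. Since unramifiedness is common to (b) and (c), this yields $(b)\Leftrightarrow(c)$.

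For (3), first $T\subseteq S$ is unramified by Proposition~\ref{DUN}, and it is finite because $S$ is module-finite over $R\subseteq T$. Applying the cancellation property of \'etale morphisms (\cite[17.3.4]{EGAIV}: if $R\to S$ is \'etale and $R\to T$ is unramified, then $T\to S$ is \'etale) to $R\subseteq T\subseteq S$ shows that $T\subseteq S$ is \'etale, hence projective by part (2). The delicate point is the finiteness of $T$ over $R$, which I would extract from the trace: as $T\subseteq S$ is finite \'etale, the trace $\mathrm{tr}_{S|T}\colon S\to T$ is nonzero, hence surjective, on every separable fibre $S\otimes_T\kappa(\mathfrak p)$, the fibres being nonempty by lying over, so $\mathrm{tr}_{S|T}$ is surjective by Nakayama. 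Choosing $x\in S$ with $\mathrm{tr}_{S|T}(x)=1$, the $T$-linear map $s\mapsto \mathrm{tr}_{S|T}(xs)$ retracts $T\hookrightarrow S$, so $T$ is a $T$-module, a fortiori an $R$-module, direct summand of $S$. Thus $T$ is an $R$-module direct summand of the finite projective $R$-module $S$ (projectivity from part (2)), hence itself finite and projective over $R$; since $R\subseteq T$ is unramified by hypothesis, part (2) gives that $R\subseteq T$ is \'etale, finite and projective. I expect this finiteness of $T$ over $R$ to be the main obstacle, since submodules of finite modules need not be finite without Noetherian hypotheses and the naive faithfully flat descents are circular; the trace-splitting argument is exactly what breaks the circle, while in (2) the only subtle ingredient is the module-versus-algebra finite-presentation comparison.
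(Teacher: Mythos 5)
Your proof is correct, but it takes a genuinely different, more self-contained route than the paper's, which delegates almost every step to the literature. For (1) the paper simply cites Ohm--Rush; your argument (finite projective $\Rightarrow$ finitely presented as a module $\Rightarrow$ finitely presented as an algebra, since module-finite presentation upgrades to algebra-finite presentation for module-finite algebras) is an equally valid and more elementary substitute. For (2) the paper invokes \cite[Proposition 18.3.1]{EGAIV} in both directions, after using (1) to secure finite presentation; you instead reprove that criterion by hand via the separability idempotent: projectivity of $S$ over $S\otimes_RS$ splits $0\to J\to S\otimes_RS\to S\to 0$, forcing $J=J^2$, and conversely $J=J^2$ with $J$ finitely generated gives an idempotent generator, exhibiting $S$ as a direct summand of $S\otimes_RS$. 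For (3) the divergence is most pronounced: the paper gets projectivity of $S$ over $T$ from DeMeyer--Ingraham, concludes $T\subseteq S$ is \'etale, then \emph{descends} \'etaleness of $R\subseteq S$ along the faithfully flat $T\to S$ (EGA IV 17.7.7) to obtain $R\subseteq T$ \'etale, finiteness of $R\subseteq T$ coming for free from integrality plus finite type; you instead get $T\subseteq S$ \'etale by cancellation, projectivity from your (2), and then use surjectivity of the trace of the finite \'etale extension $T\subseteq S$ to realize $T$ as a $T$-module (hence $R$-module) direct summand of the finite projective $R$-module $S$, after which (2) applied to the unramified extension $R\subseteq T$ finishes. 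Your trace-splitting argument buys independence from both the DMI reference and faithfully flat descent; what it costs is the extra verification that the trace is fiberwise nonzero (which you correctly justify by lying over and separability of the fibres). One side remark of yours is inaccurate, though it does not affect your proof: the ``naive faithfully flat descent'' is not circular --- once $T\to S$ is known to be faithfully flat (which the paper establishes independently via DMI), the descent step is perfectly legitimate, and it is exactly how the paper concludes.
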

\begin{proof}
(1) Since $R\subseteq S$ is finite, it is of finite type. Then use  \cite[Theorem 4.4]{OR}.

(2)  Suppose that $R\subseteq S$ is \'etale. Since $R\subseteq S$ is of finite presentation and finite, by \cite[Chapter I, Proposition 6.2.10]{EGA}, $S$ is an $R$-module of finite presentation. It follows that $S$ is a projective $R$-module because it is flat. For $S\otimes_R S\to S$, see  \cite[Proposition 18.3.1]{EGAIV}. For the converse, use (1) to get that $R\subseteq S$ is of finite presentation. Then again apply  \cite[Proposition 18.3.1]{EGAIV}.

(3) We first observe that  $T\subseteq S$ is unramified  by Proposition~\ref{DUN}. Consequently,  $T\subseteq S$ is projective because $R\subseteq T$ is unramified \cite[Chapter II, Proposition 2.3]{DMI}, whence faithfully flat. It is also of finite presentation again  by  \cite[Chapter I, Proposition 6.2.10]{EGA} or by (1). Therefore, $T\subseteq S$ is \'etale by \cite[17.3.4]{EGAIV} and so is $R\subseteq T$ by \cite[17.7.7]{EGAIV}. Moreover, $R\subseteq T$ is finite because it is integral.
\end{proof}

\begin{proposition}\label{GUF} Let $R\subseteq S$ be a quasi-finite  Gilmer extension, then $R\subseteq S$ is finite. In particular,  Gilmer extensions  that are either unramified or have FCP are finite. Moreover, an \'etale Gilmer extension is finite and projective. 
\end{proposition}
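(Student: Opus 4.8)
The plan is to isolate the essential assertion, namely that a quasi-finite Gilmer extension is finite, and then to read off the three consequences from it. The heart of the argument is a reduction to the nucleus followed by a filtration argument over an Artinian base.

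For the main assertion I would first reduce to the case of Artinian base. Write $C := (R:S)$. Since $C$ is a common ideal of $R$ and $S$, one has $S/C = S \otimes_R (R/C)$, so that the extension $R/C \subseteq S/C$ is the base change of $R \subseteq S$ along $R \to R/C$ and is therefore again quasi-finite. Moreover $C$ annihilates $S/R$, so from the short exact sequences $0 \to R \to S \to S/R \to 0$ and $0 \to R/C \to S/C \to S/R \to 0$ one sees that $S$ is a finite $R$-module if and only if $S/C$ is a finite $R/C$-module. As the nucleus $R/C$ is Artinian by the Gilmer hypothesis, the problem reduces to showing that if $R$ is Artinian and $R \subseteq S$ is quasi-finite, then $S$ is a finite $R$-module.

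For this Artinian case, let $J$ be the nilradical of $R$, which coincides with its Jacobson radical and is nilpotent, say $J^N = 0$; then $R/J$ is a finite product of fields. The quotient $S/JS = S \otimes_R (R/J)$ decomposes as the product of the fibres of $S$ over the (finitely many, all maximal) primes of $R$, and each such fibre is finite-dimensional over the corresponding residue field precisely because $R \subseteq S$ is quasi-finite; hence $S/JS$ is a finite $R/J$-module. I would then use the $J$-adic filtration $S \supseteq JS \supseteq \cdots \supseteq J^N S = 0$: each graded piece $J^j S/J^{j+1}S$ is a quotient of $(J^j/J^{j+1}) \otimes_{R/J} (S/JS)$, and both tensor factors are finite over $R/J$ (the first because $R$ is Artinian, the second by the previous step), so every graded piece is a finite $R/J$-module. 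Since the filtration has finite length, $S$ is a finite $R$-module. This filtration step is exactly where I expect the main obstacle to lie: it is the point at which nilpotence of $J$ and finite-dimensionality of the fibres coming from quasi-finiteness must be combined, and one must verify carefully that the graded pieces are genuinely finite via the displayed surjection from the tensor product.

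It then remains to deduce the particular cases. An unramified extension is quasi-finite: by Property UN one has $PS_Q = QS_Q$ and a finite residual extension $\kappa(P) \subseteq \kappa(Q)$ for every $Q$ over $P$, so each fibre is a finite product of finite field extensions of the residue field, hence finite-dimensional; thus an unramified Gilmer extension is finite by the main assertion. For the FCP case I would invoke that an extension with FCP is integral, so that the recalled characterization of integral FCP extensions (\cite[Proposition 4.2]{DPP2}), stating that such an extension is finite with Artinian nucleus, applies directly and yields finiteness; this is the one place relying on an external fact (integrality of FCP extensions) not recorded in the preceding material. Finally, an \'etale extension is in particular unramified, so an \'etale Gilmer extension is finite by the above, and being finite and \'etale it is projective by Lemma~\ref{Seppro}(2).
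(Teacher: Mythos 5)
Your proof of the central assertion (a quasi-finite Gilmer extension is finite) is correct, but it takes a genuinely different route from the paper. The paper makes the same reduction to an Artinian base with zero conductor and then invokes Zariski's Main Theorem in Raynaud's form \cite[Corollaire 2, p.42]{R}: the quasi-finite extension factors as a finite extension $R\subseteq R'$ followed by a flat epimorphism of finite presentation $R'\to S$; since $R'$ is then zero-dimensional, this epimorphism is surjective by Property SE (Proposition~\ref{SE}), and finiteness follows. Your d\'evissage --- finiteness of $S/JS$ over $R/J$ via the fibres, then the surjections from $(J^j/J^{j+1})\otimes_{R/J}(S/JS)$ onto the graded pieces $J^jS/J^{j+1}S$ of the finite $J$-adic filtration --- is self-contained and avoids ZMT entirely, at the cost of length; both arguments are valid. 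Your treatment of the unramified case (Property UN forces the fibre rings to be finite products of finite field extensions, hence quasi-finiteness) and of the \'etale case (\'etale implies unramified, and finite plus \'etale gives projective by Lemma~\ref{Seppro}(2)) is also sound.

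The FCP case, however, contains a genuine error: the ``external fact'' you invoke, that an FCP extension is integral, is false. A discrete valuation ring inside its quotient field, e.g. $\mathbb{Z}_{(p)}\subset\mathbb{Q}$, is a minimal extension (the only intermediate rings are the two endpoints), hence has FIP and FCP, yet is not integral; it is a flat epimorphism in the Ferrand--Olivier dichotomy \cite{FO}. This example is not a Gilmer extension (its nucleus $\mathbb{Z}_{(p)}$ is not Artinian), so the proposition itself is unharmed, but it shows that integrality of an FCP Gilmer extension is a consequence of the Gilmer hypothesis and must be proved, not quoted. One correct repair, using only tools already in the paper: reduce to $(R:S)=0$ with $R$ Artinian; if the integral closure $\overline R$ of $R$ in $S$ were different from $S$, FCP would provide a minimal subextension $\overline R\subset T$, which cannot be finite (a finite minimal extension is integral, contradicting integral closedness of $\overline R$ in $T$), so by Ferrand--Olivier it is a flat epimorphism; but $\overline R$ is zero-dimensional, being integral over the Artinian ring $R$, so Property SE (Proposition~\ref{SE}) makes $\overline R\to T$ surjective, a contradiction. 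Hence $S=\overline R$ is integral over $R$, and \cite[Proposition 4.2]{DPP2} (or your main assertion, since the extension is now quasi-finite) yields finiteness.
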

\begin{proof}  We can assume that $(R:S)= 0$ and that $R$ is an Artinian ring. The result is then  a consequence of the Zariski Main Theorem, since a quasi-finite extension $R\subseteq S$ is a finite extension $R\subseteq  R'$ followed by a flat epimorphism of finite presentation $R'\to S$ \cite[Corollaire 2, p.42]{R}. 
Then $R \to S$ is finite because $R' \to S$ is a flat epimorphism whose domain is zero-dimensional whence is surjective  by Property (SE). 
\end{proof}

 \begin{proposition}\label{RED} Let $R\subseteq S$ be an extension and $P\in \mathrm{Spec}(R)$.
  \begin{enumerate}
  
  \item If $R\subseteq S$ is \'etale and $R$ is reduced, then  $S$ is reduced. It follows that $PS$ is a radical ideal.
  
  \item If $R \subseteq S$ is a unramified  extension and $R$ is an Artinian ring, then $PS =Q_1\cap \cdots \cap Q_n$, where $Q_1,\ldots,Q_n$ are the prime ideals of $S$ lying over $P$. 
  
     \end{enumerate} 
   \end{proposition}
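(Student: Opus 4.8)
The plan is to treat the two parts separately, extracting from the hypothesis a \emph{fibre} that is reduced and then propagating reducedness. The unifying idea is that an unramified algebra over a field is a finite product of separable field extensions; part (2) uses this in a single fibre, while part (1) spreads it out over a reduced base via flatness.

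For (1), the decisive fact is that an \'etale morphism $A\to B$ with $A$ reduced has $B$ reduced, and I would apply it once to $R\subseteq S$ and once to a base change. Since reducedness is local on $\mathrm{Spec}(S)$, I would fix $Q\in\mathrm{Spec}(S)$, set $P:=Q\cap R$, and exploit flatness of the \'etale extension to get a flat local homomorphism $R_P\to S_Q$. Here $R_P$ is reduced, being a localization of a reduced ring, and by Property UN we have $PS_Q=QS_Q$, so the fibre $S_Q/PS_Q=\kappa(Q)$ is a field, finite and separable over $\kappa(P)$, hence geometrically reduced. The ascent of reducedness along a flat morphism with geometrically reduced fibres (see \cite[IV, \S6.5]{EGAIV}) then yields that $S_Q$ is reduced; as $Q$ is arbitrary, $S$ is reduced. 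For the radicality of $PS$ I would base change along $R\to R/P$: since the \'etale property is stable under base change, $R/P\to S/PS$ is \'etale, and $R/P$ is a domain, hence reduced, so the fact just proved (valid for morphisms, not only for extensions) gives that $S/PS$ is reduced, that is, $PS$ is a radical ideal.

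For (2) flatness is not needed and everything happens in one fibre over a field. As $R$ is Artinian, $P$ is maximal, so $R/P=\kappa(P)$ is a field; since $R\subseteq S$ is unramified it is of finite type, whence by base change $\kappa(P)\to S/PS$ is an unramified $\kappa(P)$-algebra of finite type. I would then show that $A:=S/PS$ is reduced with finitely many primes: for every $Q'\in\mathrm{Spec}(A)$, Property UN forces $Q'A_{Q'}=0$, so $A_{Q'}$ is a field; thus $\dim A=0$, and being Noetherian it is Artinian, hence a finite product of the fields $\kappa(Q_i)$. In particular $A$ is reduced, and its primes are exactly the primes of $S$ lying over $P$, the maximality of $P$ identifying $\{Q:Q\supseteq PS\}$ with $\{Q:Q\cap R=P\}=\{Q_1,\dots,Q_n\}$. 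Consequently $PS$ is radical and $PS=\sqrt{PS}=Q_1\cap\cdots\cap Q_n$.

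The main obstacle is the reducedness step in (1): it is precisely here that one must combine flatness (so that the localized map $R_P\to S_Q$ stays flat) with separability of the residual extensions (to obtain geometrically reduced fibres), invoking the flat-ascent theorem for reducedness. Part (2), by contrast, only inspects the zero-dimensional fibre $S/PS$ over the field $\kappa(P)$ and is therefore elementary; this also explains why (2) needs only the unramified hypothesis whereas (1) genuinely requires \'etaleness.
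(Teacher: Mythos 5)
Your proposal is correct, but it takes a genuinely different route from the paper, most notably in part (2). There the paper first invokes Proposition~\ref{GUF} (which rests on Zariski's Main Theorem and the surjectivity property $\mathbf{SE}$ of Proposition~\ref{SE}) to conclude that the unramified Gilmer extension $R\subseteq S$ is module-finite; hence $S$ itself is Artinian, decomposes as the product of its localizations $S_Q$, and Property UN is applied componentwise to identify $PS$ with $Q_1\cap\cdots\cap Q_n$. You never leave the fibre: $S/PS$ is a finite-type algebra over the field $\kappa(P)$, zero-dimensional by Property UN, hence Artinian and a finite product of the fields $\kappa(Q_i)$. This is more economical, since it avoids Proposition~\ref{GUF} and the Zariski Main Theorem altogether, while the paper's route yields the extra (and elsewhere useful) information that $S$ is finite over $R$ and Artinian. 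For part (1) the paper simply cites \cite{R} for ``\'etale over reduced is reduced'' and leaves the radicality of $PS$ implicit; your base change along $R\to R/P$ is exactly the intended argument for that last point, and your re-proof of the reducedness statement via flat ascent is sound in substance. One caveat on that step: the ascent theorem you quote from \cite{EGAIV} carries locally Noetherian hypotheses and, in its pointwise form, requires either geometric reducedness of all fibres or the openness of the geometrically-reduced-fibre locus --- not merely the closed fibre of $R_P\to S_Q$ that you check. Both points are easily repaired: Property UN holds at every prime of $S$, so every fibre of an \'etale morphism is geometrically reduced, and the general (non-Noetherian) case of Proposition~\ref{RED}(1) follows by a standard limit argument since \'etale morphisms are of finite presentation. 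As written, though, your argument covers the Noetherian case, whereas the statement imposes no such hypothesis on $R$; the paper's citation of \cite{R} sidesteps this issue.
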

   \begin{proof} (1) is well-known \cite[Proposition 1, p.74]{R}. 
   
(2) Assume that $R\subseteq S$ is unramified and that $R$ is Artinian.  Then $R\subseteq S$ is finite by Proposition~\ref{GUF}. This entails that $S$ is an Artinian ring isomorphic to $\prod [S_Q \mid Q \in \mathrm{Spec}(S)]$. It follows that $PS = \prod [PS_Q \mid Q\cap R = P] = \prod[QS_Q \mid Q\cap R=P]$ by Property UN, so that $PS =Q_1\cap \cdots \cap Q_n$, where the prime ideals $Q_i$ are those above $P$.
 \end{proof}

\subsection{Minimal extensions and extensions having FCP}

 Minimal extensions  are examples of  extensions that have FIP. This concept was  introduced by Ferrand-Olivier \cite{FO}. Recall that an extension $R\subset S$ is called {\it minimal} if $[R,S]=\{R,S\}$. The key connection between the above ideas is that if $R\subseteq S$ has FCP, then any maximal (necessarily finite) chain $R=R_0\subset R_1\subset\cdots\subset R_{n-1}\subset R_n=S$, of $R$-subalgebras of $S$, with {\it length} $n<\infty$, results from juxtaposing $n$ minimal extensions $R_i\subset R_{i+1},\ 0\leq i\leq n-1$.  Since minimal extensions are of finite type (monogenic), it follows that an extension that has FIP is of finite type. We will use the following theorem.
 
  \begin{theorem}\label{MIN} \cite[Theorem 3.3]{Pic} Let $R\subset T$ be a ring extension and $M: =(R:T)$. Then $R\subset T$ is minimal and finite if and only if $M\in\mathrm{Max}(R)$ and one of the following three conditions holds:
  
\item[(1)]  {\bf inert case}: $M\in\mathrm{Max}(T)$ and $R/M\to T/M$ is a minimal field extension;

\item[(2)] {\bf decomposed case:} There exist $M_1,M_2\in\mathrm{Max}(T)$ such that $M= M_1 \cap M_2$ and the natural maps $R/M\to T/M_1$ and $R/M\to T/M_2$ are both isomorphisms;

\item[(3)]  {\bf ramified case}: There exists $M'\in\mathrm{Max}(T)$ such that ${M'}^ 2 \subseteq M\subset M',  [T/M:R/M]=2$, and the natural map $R/M\to T/M'$ is an isomorphism.

Then $\mathrm{Supp}(T/R) = \{M\}$ holds in each of the above three cases.
Moreover, $R/M\subset T/M$ identifies to $R/M \subset R/M\times R/M$ in case $\mathrm{(2)}$ and  to $R/M\subset(R/M)[X]/(X^2)$ in case $\mathrm{(3)}$. 
\end{theorem}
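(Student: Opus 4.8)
The plan is to reduce the assertion to a classification of minimal extensions of a \emph{field}, the conductor $M=(R:T)$ being exactly the datum that makes this reduction possible. First I would record that every minimal extension is monogenic: choosing any $t\in T\setminus R$ forces $R\subsetneq R[t]\subseteq T$, whence $T=R[t]$. The decisive structural input I would then establish is that, for a \emph{finite} minimal extension, $M$ is a maximal ideal of $R$ (see the final paragraph for the mechanism). Since $M=(R:T)$ is by construction a common ideal of $R$ and $T$, once $M$ is maximal I may pass to $k:=R/M\subseteq A:=T/M$: here $k$ is a field, $A$ is a finite-dimensional $k$-algebra with $(k:A)=0$, and—because every element of $[R,T]$ contains $M$—the bijection $[R,T]\cong[k,A]$ shows that $k\subset A$ is again minimal. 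Thus the problem collapses to classifying minimal finite $k$-algebra extensions of a field.

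Over $k$, I would use monogenicity to write $A=k[X]/(f)$ and apply the Chinese Remainder Theorem to the factorization $f=\prod_i p_i^{e_i}$, so that $A\cong\prod_i k[X]/(p_i^{e_i})$. A short case analysis on this decomposition, driven by the requirement that no proper intermediate ring exist, eliminates everything but three normal forms. If $A$ is reduced it is a product of finite field extensions of $k$, and minimality forces either a single factor—a minimal \emph{field} extension $k\subset A$—or two factors each equal to $k$, giving the diagonal $k\subset k\times k$; any larger factor, or a third factor, produces an intermediate ring through an idempotent. If $A$ is not reduced, the same two-factor analysis forces $A$ to be local with nilpotent maximal ideal $\mathfrak m$, and minimality then forces $k\to A/\mathfrak m$ to be an isomorphism together with $\mathfrak m^2=0$ and $\dim_k\mathfrak m=1$, i.e.\ $A\cong k[X]/(X^2)$; indeed $A\cong k[X]/(X^3)$ or any residually nontrivial local $A$ already carries a proper intermediate subring.

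Translating these three forms back through $k=R/M\subseteq A=T/M$ yields precisely conditions (1), (2) and (3): $A$ a field gives $M\in\mathrm{Max}(T)$ with $R/M\subset T/M$ a minimal field extension; $A\cong k\times k$ gives $M=M_1\cap M_2$ for two maximal ideals of $T$ with $R/M\to T/M_i$ isomorphisms; and $A\cong k[X]/(X^2)$ gives a unique $M'\in\mathrm{Max}(T)$ above $M$ with ${M'}^2\subseteq M$, $[T/M:R/M]=2$ and $R/M\xrightarrow{\sim}T/M'$. For the converse I would run the reduction modulo $M$ in reverse: in each case $A$ is one of the three forms, each visibly minimal (a dimension count in the inert and ramified cases, the idempotent argument in the decomposed case) and finite, and both minimality and finiteness lift back to $R\subset T$ because $M$ is a common ideal. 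Finally, since $T/R$ is a finite $R$-module with $\mathrm{Ann}_R(T/R)=(R:T)=M$, one gets $\mathrm{Supp}(T/R)=V(M)=\{M\}$ in all three cases, and the displayed identifications in (2) and (3) are exactly the normal forms just obtained.

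I expect the maximality of the conductor to be the genuine crux; everything downstream is elementary algebra over a field. The cleanest route I would take invokes the Ferrand--Olivier dichotomy: a minimal extension is either a flat epimorphism or integral, and in the integral branch the (prime) conductor is maximal. Finiteness forces us into the integral branch, because a finite flat epimorphism is faithfully flat, hence an isomorphism, contradicting $R\neq T$; thus $M=(R:T)$ is maximal. Absent that dichotomy one can argue more directly that, for a finite (hence integral) minimal extension, localizing at any maximal ideal $\mathfrak n\supseteq M$ keeps the extension nontrivial and minimal with conductor $M R_{\mathfrak n}$, and that a nonmaximal $M$ would place this conductor strictly inside $\mathfrak n R_{\mathfrak n}$, which is incompatible with local integral minimality—so $V(M)$ collapses to the single maximal ideal $M$.
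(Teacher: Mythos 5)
The paper does not actually prove this statement; it is quoted from \cite[Theorem 3.3]{Pic}, whose proof (going back to Ferrand--Olivier \cite{FO}) follows precisely the route you take: maximality of the conductor via the flat-epimorphism/integral dichotomy, reduction modulo the common ideal $M$ using the bijection $[R,T]\cong[R/M,T/M]$, and the classification of minimal finite algebras over a field $k$ into minimal field extensions, $k\times k$, and $k[X]/(X^2)$. Your argument is correct and matches that standard proof; the only imprecision is the appeal to a ``dimension count'' for minimality in the inert case of the converse (a minimal field extension can have large degree), where one should instead observe that any intermediate ring of a finite field extension is a domain integral over $k$, hence a field, so that ring-minimality coincides with field-minimality there.
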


Next result describes minimal extensions appearing in the canonical decomposition.  If $R\subset U \subset V \subset S$ is a composite of extensions, then $U\subset V$ is called a {\it subextension} of $R\subset S$.

\begin{theorem}\label{CANMIN}  Let $R\subseteq S$ be an integral extension that has FIP, with canonical decomposition $R\subseteq \substack{+\\ S}R\subseteq \substack{t\\ S}R \subseteq S$. 
\begin{enumerate}
 \item $R\subseteq \substack{+\\ S}R$  is a composite of finitely many  ramified minimal extensions and its minimal subextensions  are ramified. 
  \item $\substack{+\\ S}R\subseteq \substack{t\\ S}R$  is a composite of finitely many  decomposed minimal  extensions and its minimal subextensions  are decomposed 

 \item $ \substack{t\\ S}R \subseteq S$ is a composite of finitely many  inert minimal extensions and its minimal subextensions  are inert.
 \end{enumerate} 
\end{theorem}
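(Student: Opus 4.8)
The plan is to reduce everything to the level of \emph{single} minimal subextensions and then read off their type from Theorem~\ref{MIN}. Since $R\subseteq S$ has FIP it has FCP, and FCP passes to every subextension $U\subseteq V$ with $R\subseteq U\subseteq V\subseteq S$; hence each of the three layers of the canonical decomposition has FCP and is the juxtaposition of finitely many minimal extensions along any maximal chain. Moreover every minimal subextension $U\subset V$ of a given layer is itself a minimal (integral) extension, so by Theorem~\ref{MIN} it is ramified, decomposed or inert. Thus it suffices to prove that \emph{every} minimal subextension of the $i$-th layer is of the prescribed type; the ``composite of finitely many'' assertions then follow at once by specializing to the steps of a maximal chain.

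The first step is to record the dictionary between the three types of Theorem~\ref{MIN} and the classes of the canonical decomposition: a minimal extension is ramified iff it is subintegral (one prime above the crucial maximal ideal, trivial residual extension), decomposed iff it is infra-integral but not subintegral (two primes above, trivial residual extensions), and inert iff it is not infra-integral (non-trivial residual field extension). This is immediate from the local normal forms $R/M\subset R/M\times R/M$ and $R/M\subset (R/M)[X]/(X^2)$ displayed in Theorem~\ref{MIN} together with the definitions. The second ingredient is that subintegrality and infra-integrality are inherited by an arbitrary subextension $U\subseteq V$ sitting inside a subintegral (resp. infra-integral) extension $R\subseteq W$: integrality is obvious, the residual isomorphisms for $U\subseteq V$ are squeezed inside the tower of residual extensions running from $\kappa(P)$ to $\kappa(Q)$ (an isomorphism for a prime $Q$ of $W$ lying over $P$ in $R$), and spectral bijectivity is squeezed likewise between the bijections on spectra. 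Finally, seminormality descends \emph{from above}: if $C\subseteq S$ is seminormal and $C\subseteq W\subseteq S$, then $C\subseteq W$ is seminormal, since the relations $b^2,b^3\in C$ for $b\in W\subseteq S$ already force $b\in C$.

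With these tools part (1) is immediate: $R\subseteq \substack{+\\ S}R$ is subintegral, so each minimal subextension is subintegral, hence ramified. For the middle layer, note first that $\substack{+\\ S}R\subseteq \substack{t\\ S}R$ is infra-integral (a subextension of the infra-integral $R\subseteq \substack{t\\ S}R$, since $\substack{t\\ S}R$ is the largest infra-integral subalgebra) and seminormal (because $\substack{+\\ S}R\subseteq S$ is seminormal and seminormality descends from above to $\substack{+\\ S}R\subseteq \substack{t\\ S}R$). Every minimal subextension $U\subset V$ is therefore infra-integral, hence ramified or decomposed; it remains to exclude the ramified (equivalently, subintegral) case. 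For the top layer $\substack{t\\ S}R\subseteq S$, which is $t$-closed, one must exclude \emph{both} the ramified and the decomposed cases, i.e. show there is no infra-integral minimal subextension at all.

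The crux, and the main obstacle, is precisely these two exclusions, because the relevant hereditary properties do not descend naively: seminormality is \emph{not} preserved when the bottom ring is enlarged (witness $k[t^2,t^3]\subseteq k[t]$), and $t$-closedness is not inherited by subextensions in general. The plan is to invoke the structural characterizations of the seminormalization/$t$-closure theory \cite{FO}, \cite{S} and \cite{Pic 2}: a seminormal FCP extension has no ramified minimal subextension, and a $t$-closed FCP extension has no infra-integral minimal subextension. Concretely these rest on the extremal property of $\substack{+\\ S}R$ and $\substack{t\\ S}R$ (largest subintegral, resp. largest infra-integral) together with a reflection argument showing that the presence of any ramified (resp. infra-integral) minimal subextension forces the seminormalization (resp. $t$-closure) of the bottom ring to be strictly larger, a contradiction. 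Since seminormalization and $t$-closure commute with localization, this reflection reduces to the local Artinian case, where the explicit normal forms of Theorem~\ref{MIN} make the computation transparent. Granting these, every minimal subextension of the middle layer is decomposed and every minimal subextension of the top layer is inert, which completes all three parts.
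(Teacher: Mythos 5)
Your reduction to the type of a single minimal subextension, the dictionary with Theorem~\ref{MIN}, the squeezing argument for residual isomorphisms and spectral bijectivity, and hence all of part (1), are correct and coincide with the paper's proof. You also correctly identify that the middle layer is infra-integral and seminormal, that the whole difficulty is to exclude the ramified case there and both infra-integral cases in the $t$-closed layer, and that naive descent of seminormality or $t$-closedness to an enlarged bottom ring fails.

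But precisely at that crux your proof stops, and this is a genuine gap. The statements you invoke (``a seminormal FCP extension has no ramified minimal subextension'', ``a $t$-closed FCP extension has no infra-integral minimal subextension'') are not in \cite{FO}, \cite{S} or \cite{Pic 2}: those references predate the FCP/FIP framework entirely, so they cannot contain them (the correct citation for the $t$-closed case is \cite[Lemma 5.6]{DPP2}, which is exactly what the paper uses for part (3)). Your fallback ``reflection argument'' is circular: asserting that a ramified minimal subextension ``forces the seminormalization of the bottom ring to be strictly larger'' is the contrapositive of the claim to be proven, not an argument for it. Concretely, a ramified minimal subextension $A\subset B$ with conductor $M=(A:B)$ only provides an element $b\in B\setminus A$ with $b^2,b^3\in M\subseteq A$; to contradict seminormality of the bottom ring $C:=\substack{+\\ S}R$ you would need $b^2,b^3\in C$, and there is no transparent way to arrange this---this is the very obstruction you yourself flagged, and no local normal-form computation fixes it. The paper's actual mechanism is different and uses FIP essentially: seminormality of $C\subseteq \substack{t\\ S}R$ descends, by shrinking the top, to $C\subseteq B$, so the conductor $(C:B)$ is a radical ideal of $B$; since $C\subseteq B$ is finite FIP, $B/(C:B)$ is Artinian reduced, hence a finite product of fields, hence absolutely flat, so every ideal of $B$ containing $(C:B)$---in particular $M$---is radical in $B$; and a radical $M$ is incompatible with the ramified normal form $M'^2\subseteq M\subset M'$ of Theorem~\ref{MIN}, leaving only the decomposed case. (For the $t$-closed layer one must in addition exclude the decomposed case, e.g.\ via spectral injectivity of $t$-closed extensions, or simply cite \cite[Lemma 5.6]{DPP2}.) Without an argument of this kind, parts (2) and (3) of your proof remain unproven.
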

\begin{proof} (1) Since $R\subseteq \substack{+\\ S}R$ is subintegral, its spectral map is bijective, and its residual extensions are isomorphisms. These properties hold for any minimal subextension, which is therefore ramified.

(2) Since $\substack{+\\ S}R\subseteq \substack{t\\ S}R$ is infra-integral, its residual extensions are isomorphisms. This property holds for any minimal subextension, which has to be either ramified or decomposed. Let $A\subset B$ be such a minimal subextension and set $M:=(A:B)$. Since $\substack{+\\ S}R\subseteq \substack{t\\ S}R$ is seminormal, so is $\substack{+\\ S}R\subseteq B$. Then, $C:=(\substack{+\\ S}R: B)$ is a radical ideal in $B$. Moreover, $\substack{+\\ S}R\subseteq B$ is a finite FIP extension, so that $R/C$ and $B/C$ are Artinian rings, giving that $B/C$ is a product of finitely many fields. 
 
It follows that $B/C$ is absolutely flat. Then $C\subseteq M$ implies that $M/C$ is a radical ideal of $B/C$, so that $M$ is a radical ideal of $B$, and $A\subset B$ is decomposed. 

(3) is  \cite[Lemma 5.6]{DPP2}. 
\end{proof}

\begin{proposition}\label{MOmega} Suppose that $R\subset S$ is an integral minimal extension. Then $R\subset S$ is not unramified if it is ramified. If $R\subset S$ is  inert, it  is unramified if and only if $R/(R:S) \subset S/(R:S)$ is a separable field extension. If $R \subset S$ is   decomposed, it is unramified.
\end{proposition}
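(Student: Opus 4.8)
The plan is to reduce all three cases to a single computation of K\"ahler differentials over a field, via Lemma~\ref{Omega}. Since $R\subset S$ is integral and minimal it is finite, so Theorem~\ref{MIN} applies: with $M:=(R:S)\in\mathrm{Max}(R)$, the conductor $M$ is a common ideal of $R$ and $S$. As minimal extensions are monogenic, hence of finite type, $R\subset S$ is unramified if and only if $\Omega(S|R)=0$; and Lemma~\ref{Omega} applied to $I=M$ gives $\Omega(S|R)\cong\Omega\bigl((S/M)|(R/M)\bigr)$. So I only need to decide the vanishing of the differentials of the base extension $R/M\subset S/M$. Writing $k:=R/M$, Theorem~\ref{MIN} identifies this base extension with one of three explicit $k$-algebras: a finite minimal field extension $k\subset\ell$ (inert), the diagonal $k\subset k\times k$ (decomposed), or $k\subset k[X]/(X^2)$ (ramified).

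In the decomposed case $S/M=k\times k$, and since K\"ahler differentials commute with finite products, $\Omega\bigl((k\times k)|k\bigr)\cong\Omega(k|k)\times\Omega(k|k)=0$; hence $\Omega(S|R)=0$ and $R\subset S$ is unramified. (Equivalently, Property UN holds: the residual extensions $R/M\to S/M_i$ are isomorphisms, and at each prime $Q=M_i$ above $M$ the localization $(S/M)_{M_i/M}$ is the field $k$, so that $M_iS_{M_i}=MS_{M_i}$, i.e.\ $QS_Q=PS_Q$.)

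In the inert case $S/M=\ell$ is a field and $k\subset\ell$ is exactly the residual extension $R/(R:S)\subset S/(R:S)$. The standard criterion for a finite field extension gives $\Omega(\ell|k)=0$ if and only if $\ell/k$ is separable, which together with $\Omega(S|R)\cong\Omega(\ell|k)$ yields the asserted equivalence. This also reads directly off Property UN, whose residual clause is precisely separability of $k\subset\ell$, while the equality $PS_Q=QS_Q$ is automatic, $M\in\mathrm{Max}(S)$ forcing $Q=P=M$.

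The ramified case is the one I expect to need the most care. Here $S/M=k[X]/(X^2)$ is the ring of dual numbers, and the single relation $X^2=0$ contributes $d(X^2)=2X\,dX$, so that \[\Omega\bigl((k[X]/(X^2))|k\bigr)\cong\bigl(k[X]/(X^2)\bigr)\,dX\big/\bigl(2X\,dX\bigr).\] This module is nonzero in every characteristic: for $\mathrm{char}\,k\neq2$ it collapses to $k\,dX$, while for $\mathrm{char}\,k=2$ the relation is trivial and it is the free rank-two module $(k[X]/(X^2))\,dX$. Thus $\Omega(S|R)\neq0$ and $R\subset S$ is not unramified. The characteristic-two behaviour is the only delicate point, and it can be circumvented by Property UN, which fails at $Q=M'$ since $M'/M=(X)$ is a nonzero nilpotent, giving $PS_{M'}=MS_{M'}\subsetneq M'S_{M'}=QS_{M'}$.
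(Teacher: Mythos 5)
Your proposal is correct and follows exactly the route of the paper's own (very terse) proof: invoke Theorem~\ref{MIN} to identify the quotient extension $R/M\subset S/M$ in each of the three cases, apply Lemma~\ref{Omega} to reduce $\Omega(S|R)$ to $\Omega\bigl((S/M)|(R/M)\bigr)$, and then decide vanishing case by case. You merely make explicit the computations (product of fields, separability criterion, dual numbers including characteristic two) that the paper leaves to the reader, and these computations are all accurate.
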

\begin{proof}  Use Theorem~\ref{MIN} which describes the extension $R/M \subset S/M$ where $M=(R:S)$ for a minimal extension $R\subset S$ with conductor a maximal ideal $M$. Then apply Lemma~\ref{Omega}.
\end{proof}

\begin{remark}\label{NOUN} (1) We immediately  observe that a finite minimal morphism  has FIP but  is not generally  \'etale. For example consider a finite minimal morphism $R \subseteq S$ whose domain is local and is not a field. By  Theorem~\ref{MIN}, its conductor is the maximal ideal of $R$. If  such a  morphism is \'etale, it is flat, so that $R$ is a field by \cite[Lemme 4.3.1]{FO}. 

(2)  We now observe that a subintegral extension $R\subset S$, that is either finite or of finite type, is never unramified. Deny, then Zorn Lemma guaranties that there is some $T\in [R,S]$ such that $T\subset S$ is a minimal extension and this extension is unramified and finite. Let $M\in \mathrm{Max}(T)$ be  the conductor $(T:S)$. Then $T/M \subseteq S/M$ is also unramified. If $T\subset S$ is inert, then $T/M= S/M$, an absurdity. If $T\subset S$ is decomposed, the spectral injectivity is violated. If $T\subset S$ is ramified, then $S/M\cong (T/M)[X]/(X^2)$ is not unramified over $T/M$, because the polynomial $X^2$ is not separable.
\end{remark}
 
  As in some of our earlier papers,  the canonical decomposition of a finite extension $R\subset S$ leads to different cases. The subintegral part $R \subseteq \substack{+\\ S}R$ cannot be unramified  by Remark~\ref{NOUN}(2), even if $R\subset S$ is unramified. This fact forbids us to use Lemma~\ref{Seppro}, except when $R$ is seminormal in $S$. This case is studied in Section 5.

 \subsection{Some special rings}

 Rings with finitely many ideals were characterized by D. D. Anderson and  S. Chun.
Recall that a SPIR is a {\it special principal ideal ring}, {\it i.e.} a ring $R$ with a unique nonzero prime ideal $M=Rt$, such that $M$ is nilpotent of index $p>0$. Hence a SPIR is not a field and each nonzero element of a SPIR is of the form $ut^k$ for some unit $u$ and some {\it unique} integer $k< p$. We deduce from the proof of \cite[Lemma 5.11]{DPP2} that an Artinian local  ring $(R,M)$ is infinite if and only if $R/M$ is infinite.

\begin{theorem} \label{FMIR}\cite[Corollary 2.4]{AC} A ring $R$ has only finitely many ideals if and only if $R$ is (isomorphic to) a finite product of finite local rings, infinite SPIRs and fields. We call  FMIR such a ring.
\end{theorem}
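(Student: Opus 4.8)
The plan is to prove the two implications separately, the reverse (sufficiency) direction being routine and the forward direction carrying the real content.

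For sufficiency I would check that each factor type has finitely many ideals and then invoke the decomposition of ideals over a finite product. A field has exactly two ideals; a finite local ring has only finitely many subsets, hence finitely many ideals; and an infinite SPIR $(R,Rt)$ with $M=Rt$ nilpotent of index $p$ has its ideals exhausted by the chain $R\supsetneq M\supsetneq M^2\supsetneq\cdots\supsetneq M^{p}=0$, so precisely $p+1$ of them. Since the central idempotents of a finite product $R_1\times\cdots\times R_k$ force every ideal to split as $I_1\times\cdots\times I_k$ with $I_j$ an ideal of $R_j$, the number of ideals of the product is the product of the (finite) numbers of ideals of the factors, hence finite.

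For necessity I would first reduce to the local case. The finiteness of the set of ideals yields both the ascending and the descending chain conditions, so $R$ is Artinian and hence, by the structure theorem for Artinian rings, a finite product $R\cong R_1\times\cdots\times R_k$ of Artinian local rings. The same splitting of ideals used above shows each $R_i$ again has only finitely many ideals, so it suffices to classify an Artinian local ring $(R,M)$ with finitely many ideals. If $R$ is finite, it is a finite local ring and we are done; so assume $R$ is infinite, whence its residue field $k:=R/M$ is infinite by the fact recalled just before the statement. If $M=0$ then $R=k$ is an (infinite) field.

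The crux is the remaining case $M\neq 0$, which I would settle by a subspace-counting argument along the $M$-adic filtration $R\supset M\supset M^2\supset\cdots\supset M^n=0$. For each $i$, the ideals $J$ with $M^{i+1}\subseteq J\subseteq M^i$ are exactly the $R$-submodules of $M^i/M^{i+1}$, that is, the $k$-subspaces of this finite-dimensional $k$-vector space. Over the infinite field $k$ any space of dimension $\geq 2$ has infinitely many subspaces (already the lines of a plane are parametrized by the infinite set $\mathbb{P}^1(k)$), so finiteness of the ideal set forces $\dim_k M^i/M^{i+1}\leq 1$ for every $i$. In particular $\dim_k M/M^2\leq 1$, and Nakayama excludes $M=M^2$ because $M\neq 0$; hence $\dim_k M/M^2=1$ and $M=(t)$ is principal. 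Then $M^i=(t^i)$, each graded piece is spanned by the image of $t^i$, and any nonzero ideal $I$, sitting in the largest power $M^i$ containing it, must contain an element $ut^i$ with $u$ a unit and therefore all of $M^i$, giving $I=M^i$. Thus the only ideals are the powers of $M$, so $R$ is an infinite SPIR. The main obstacle is precisely this step: the whole classification hinges on the dichotomy that an infinite residue field manufactures infinitely many ideals as soon as some associated graded piece has dimension $\geq 2$, and it is this that pins $M$ down to a principal ideal and forces the SPIR shape.
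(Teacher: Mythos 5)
Your proof is correct, but it differs in kind from what the paper does: the paper gives no proof at all, simply citing Anderson--Chun \cite[Corollary 2.4]{AC}, where the classification falls out of a more general study of rings whose monoid of fractional ideals is finitely generated. Your argument is a self-contained elementary proof: finiteness of the ideal set gives both chain conditions, hence Artinianness and the decomposition into Artinian local factors; the splitting of ideals along central idempotents reduces everything to the local case; and the local classification is settled by the observation that over an infinite residue field any graded piece $M^i/M^{i+1}$ of dimension $\geq 2$ already produces infinitely many ideals (a $\mathbb{P}^1(k)$'s worth of lines), which forces $M$ to be principal and every nonzero ideal to be a power of $M$. All steps check out, including the two points that need care: the use of the paper's recalled fact that an Artinian local ring is infinite exactly when its residue field is, and the unit-extraction argument showing that an ideal contained in $M^i$ but not $M^{i+1}$ equals $M^i$. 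What the citation buys the paper is brevity and a link to a broader theory; what your route buys is transparency --- the reader sees exactly why the infinite-residue-field case collapses to the SPIR shape, which is precisely the dichotomy (finite local ring versus infinite SPIR with rank conditions) that drives Theorems~\ref{PROD} and \ref{2.9} later in the paper.
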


We observe that a  FMIR  is an Artinian ring, and  that its local rings can only be infinite fields, finite rings and infinite SPIRs. As  we wrote in the summary, our interest was motivated by the two next statements.
FMIRs already appear  when considering \'etale algebras over a field.

\begin{theorem}\label{PROD}\cite[Theorem 4.2]{Pic 4} Let $R$ be a ring and $n>1$  an integer. Then $R\subseteq R^n$ has FIP if and only if $R$ is a FMIR,  with $n=2$ when $R$ has at least an infinite SPIR local ring.
\end{theorem}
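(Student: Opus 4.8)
The plan is to reduce everything to the local case and then to two clean computations, one for $n=2$ and one isolating the infinite-SPIR factors for $n\ge 3$. First I would dispose of the reduction. For the diagonal extension one checks directly that the conductor $(R:R^n)$ is $0$; hence if $R\subseteq R^n$ has FIP it has FCP, and the criterion for FCP of an integral extension (\cite[Proposition 4.2]{DPP2}, recalled in the Introduction) forces the nucleus $R=R/(R:R^n)$ to be Artinian. Thus in either direction I may assume $R$ is a finite product $R=\prod_i R_i$ of Artinian local rings. The idempotents of $R$ split every intermediate algebra, giving a canonical bijection $[R,R^n]\cong\prod_i[R_i,R_i^n]$, so that $R\subseteq R^n$ has FIP if and only if each $R_i\subseteq R_i^n$ does. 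This lets me argue one local factor at a time and read the global statement off the list of local types in Theorem~\ref{FMIR}.

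The case $n=2$ I would settle by a single structural observation. Writing $\Delta\subseteq R^2$ for the diagonal, the map $(a,b)\mapsto b-a$ identifies $R^2/\Delta$ with $R$, and I claim that $A\mapsto I_A:=\{c\in R:(0,c)\in A\}$ is a bijection from $[R,R^2]$ onto the set of ideals of $R$. Indeed $I_A$ is visibly an ideal and $A=\Delta+(\{0\}\times I_A)$; conversely, for any ideal $I$ the submodule $\Delta+(\{0\}\times I)$ is multiplicatively closed, because the defect $ai'+a'i+ii'$ of a product of two of its elements lands in $I+I^2=I$. Hence $R\subseteq R^2$ has FIP exactly when $R$ has finitely many ideals, which by Theorem~\ref{FMIR} means that $R$ is a FMIR. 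This proves the theorem for $n=2$ and shows that an infinite SPIR factor is tolerated precisely there.

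For $n\ge 3$ I would treat both directions through the local reduction. For sufficiency it remains to handle the surviving local factors: a finite local ring gives a finite $R_i^n$, hence FIP trivially, while a field $k$ gives $[k,k^n]$ in bijection with the set partitions of $\{1,\dots,n\}$ — any intermediate algebra is reduced with all residue fields equal to $k$, hence is the algebra of functions constant on the blocks of a partition — so FIP holds (the count is the Bell number). For necessity, restricting FIP of $R\subseteq R^n$ to the subalgebra $\{(a,b,\dots,b)\}\cong R^2$ shows that $R\subseteq R^2$ has FIP, so $R$ is a FMIR by the previous paragraph; it then suffices to rule out an infinite SPIR factor when $n\ge 3$.

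The crux is this last point, where I would exhibit infinitely many intermediate algebras. If $(R,M=(t))$ is an infinite SPIR with index $p$, put $w:=t^{\,p-1}\ne 0$, so that $w^2=0$ and $\mathrm{ann}(w)=M$. For $\lambda\in R$ set $A_\lambda:=R+R\,(0,w,\lambda w,0,\dots,0)\subseteq R^n$. Each $A_\lambda$ is an $R$-subalgebra (it is the image of $R[X]/(X^2)$, using $w^2=0$), and I would check that $A_\lambda\ne A_{\lambda'}$ whenever $\lambda-\lambda'$ is a unit, by verifying that $(0,0,sw,0,\dots,0)\in A_\lambda$ only when $sw=0$ (this is where $\mathrm{ann}(w)=M$ enters). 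Since the residue field $R/M$ is infinite, this yields infinitely many distinct intermediate algebras, contradicting FIP. I expect the verification of closure and of pairwise distinctness to be the only delicate computation; note that it is exactly the availability of a third coordinate that makes the family infinite, which explains why $n=2$ is the threshold for infinite SPIR factors.
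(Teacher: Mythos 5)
Your proof is correct. There is, however, no in-paper proof to compare it against: the paper quotes Theorem~\ref{PROD} from the external reference [Pic 4] without proof, so your argument serves as a self-contained substitute. Its ingredients parallel facts the paper records separately: the vanishing of $(R:R^n)$ and the FCP criterion forcing $R$ Artinian (Lemma~\ref{2.1}, Definition~\ref{defG} and \cite[Proposition 4.2]{DPP2}); the bijection $[R,R^2]\leftrightarrow\{\mbox{ideals of }R\}$, which is exactly Gilbert's result invoked in Remark~\ref{2.8}(2); the Bell-number description of $[k,k^n]$ over a field $k$, which is Proposition~\ref{1.5}; and the Anderson--Chun characterization (Theorem~\ref{FMIR}). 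What you add, and what the paper nowhere makes explicit, are (i) the idempotent splitting $[R,R^n]\cong\prod_i[R_i,R_i^n]$ over the Artinian factorization of $R$, which is valid since every $A\in[R,R^n]$ contains the idempotents of $R$ and hence decomposes as $\prod_i e_iA$, and (ii) the crux: the family $A_\lambda=R+R\,(0,w,\lambda w,0,\dots,0)$ with $w=t^{p-1}$ over an infinite SPIR when $n\ge 3$. That computation does go through: if $(0,0,sw,0,\dots,0)=(a,\dots,a)+r(0,w,\lambda w,0,\dots,0)$, the first coordinate gives $a=0$, the second gives $rw=0$, i.e. $r\in\mathrm{ann}(w)=M$, and the third gives $sw=r\lambda w\in Mw=Rt^p=0$; applying this with $s=\lambda-\lambda'$ shows $A_\lambda=A_{\lambda'}$ forces $\lambda-\lambda'\in M$, and since an infinite SPIR has infinite residue field, this produces infinitely many intermediate algebras, as needed. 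The remaining steps (restriction of FIP to the subalgebra $\{(a,b,\dots,b)\}\cong R^2$ for necessity of the FMIR condition, finiteness of $[R_i,R_i^n]$ for finite local factors, and the partition description over fields) are all sound, so the proof is complete.
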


 \begin{theorem}\label{ET} \cite[Proposition 3, A V 29]{Bki A} If $K$ is a field and $K\subseteq A$ is an \'etale algebra, then $K\subseteq A$ has FIP and $A$ is a  FMIR. 
\end{theorem}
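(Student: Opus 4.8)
The plan is to establish the three assertions in turn---that $K\subseteq A$ is finite, that $A$ is a FMIR, and that $K\subseteq A$ has FIP---with the last being the substantive point. First I would dispose of finiteness. Since $K$ is a field, the conductor $(K:A)$ is an ideal of $K$, hence equals $0$ or $K$, so the nucleus $K/(K:A)$ is either $K$ or $0$; in both cases it is Artinian. Thus $K\subseteq A$ is automatically a Gilmer extension, and Proposition~\ref{GUF} shows that the \'etale extension $K\subseteq A$ is finite and projective. In particular $A$ is a finite-dimensional $K$-vector space, hence an Artinian ring, and so $A\cong\prod_{i=1}^n A_i$ with each $A_i$ local Artinian.

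Next I would identify the local factors using the unramified hypothesis. Let $Q_i\in\mathrm{Spec}(A)$ be the maximal ideal corresponding to $A_i$; it lies over the unique prime $P=0$ of $K$, and $A_{Q_i}=A_i$. Property UN gives $PA_{Q_i}=Q_iA_{Q_i}$, and since $P=0$ this forces the maximal ideal of $A_i$ to vanish, so each $A_i$ is a field. Property UN also makes $\kappa(P)=K\subseteq\kappa(Q_i)=A_i$ a finite separable field extension. Hence $A$ is a finite product of finite separable field extensions of $K$; being a finite product of fields it has only finitely many ideals, so $A$ is a FMIR by Theorem~\ref{FMIR}.

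It remains to prove FIP, and here I would reduce to the diagonal case already settled in Theorem~\ref{PROD} by means of a splitting base change. I would choose a finite Galois extension $K\subseteq L$ large enough to contain a Galois closure of each $A_i$; then each $A_i\otimes_K L$ splits as a product of copies of $L$, so $A\otimes_K L\cong L^m$ with $m=\dim_K A$. Consider the base-change map $[K,A]\to[L,A\otimes_K L]$ sending a $K$-subalgebra $B$ to the $L$-subalgebra $B\otimes_K L$ (this lands where claimed and is an injection of algebras because $K\to L$ is flat). The crucial point is that this map is itself injective on $[K,A]$: since $K\to L$ is faithfully flat, tensoring $0\to B\to A\to A/B\to 0$ with $L$ identifies $B\otimes_K L$ with $\ker(A\otimes_K L\to (A/B)\otimes_K L)$, whence $A\cap(B\otimes_K L)=B$ inside $A\otimes_K L$, so $B$ is recovered from $B\otimes_K L$. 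Therefore $[K,A]$ injects into $[L,L^m]$, which is finite by Theorem~\ref{PROD} since the field $L$ is a FMIR having no infinite SPIR as a local ring (the case $m=1$ being trivial). Consequently $K\subseteq A$ has FIP.

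The main obstacle is precisely the FIP statement, and the separability hypothesis is what makes the argument work: it guarantees that $A\otimes_K L$ is reduced and split, i.e.\ the trivial cover $L^m$ rather than a merely finite $L$-algebra, so that Theorem~\ref{PROD} is applicable after base change. The other delicate ingredient is the faithfully flat descent identity $B=A\cap(B\otimes_K L)$, which is exactly what allows the finiteness of $[L,L^m]$ to descend to the finiteness of $[K,A]$.
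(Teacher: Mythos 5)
Your proof is correct, but note that the paper itself contains no proof of this statement: Theorem~\ref{ET} is quoted directly from Bourbaki \cite[Proposition 3, A V 29]{Bki A}, where \'etale algebras over a field are defined as finite-dimensional algebras that become diagonal after a field base change, and the finiteness of the set of subalgebras is established there by Galois-theoretic means. Your argument instead derives the result from the paper's own machinery and from Raynaud's definition of \'etale (flat, of finite presentation, unramified), which is the definition the paper actually uses: Proposition~\ref{GUF} gives finiteness, Property UN identifies $A$ as a finite product of finite separable field extensions of $K$ (hence a FMIR by Theorem~\ref{FMIR}), and a splitting base change $K\to L$ combined with faithfully flat descent of subalgebras ($B=A\cap(B\otimes_KL)$) reduces FIP to the finiteness of $[L,L^m]$, settled by Theorem~\ref{PROD} (or by Proposition~\ref{1.5}, which gives $|[L,L^m]|=B_m$). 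This is essentially the same splitting-plus-descent technique the paper deploys elsewhere: the proof of Proposition~\ref{5.4} uses a base change $R\to L$ with $L\otimes_RS\cong L^n$, Proposition~\ref{1.5} and the descent result \cite[Lemma 2.1]{DPP3}, and the proof of Theorem~\ref{2.9} uses the analogous Henselization argument with \cite[Theorem 2.2]{DPP3}; your hand-made descent identity could simply be replaced by a citation of \cite[Lemma 2.1]{DPP3}. What the citation buys the paper is brevity; what your proof buys is self-containedness and, more substantively, a bridge between definitions: since Bourbaki's notion of \'etale algebra is not literally Raynaud's, invoking the Bourbaki proposition strictly speaking presupposes exactly the reduction your first two paragraphs supply, namely that an \'etale extension of $K$ in the paper's sense is a finite product of finite separable field extensions.
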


As an example, we extract from  \cite[Proposition 4.15]{DPP3} the following more precise  result. For a positive integer $n> 1$,  recall that the $n$th {\it Bell number} $B_n$ is the number  of partitions of $\{1,\ldots,n\}$ \cite[p. 214]{An}. 

\begin{proposition}\label{1.5} Let $R$ be ring, then the diagonal extension $R \subseteq R^n$ is \'etale (a trivial \'etale cover), for any integer $n>1$. Moreover, if $R$ is a  field, then
$[R,R^n]$ has $B_n$ elements and $R^n$ has $2^n$ ideals.
 \end{proposition}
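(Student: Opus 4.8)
The plan is to prove the three assertions separately. For the first I would show that $R\subseteq R^n$ is an étale cover; as it already has the shape $R\subseteq R^n$ of the definition, it is then a trivial étale cover by fiat. As an $R$-module, $R^n$ is free of rank $n$, hence finite, faithfully flat and finitely presented, so only the unramified property needs checking. I would verify Property UN directly: the primes of $R^n$ are the ideals $Q=R\times\cdots\times P\times\cdots\times R$ indexed by a factor $1\le i\le n$ and a prime $P\in\mathrm{Spec}(R)$ placed in the $i$th slot. Localizing at $Q$ sends the idempotents $e_j$ with $j\neq i$ into $Q$ and makes $e_i$ a unit, so $R^n_Q\cong R_P$ and the structure map $R\to R^n_Q$ is the canonical localization $R\to R_P$. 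Hence $\kappa(P)\to\kappa(Q)$ is an isomorphism (in particular finite and separable) and $PR^n_Q=QR^n_Q$, so UN holds. Being finite, projective and unramified, $R\subseteq R^n$ is étale by Lemma~\ref{Seppro}(2).

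For the second assertion, with $R=K$ a field, I would exhibit a bijection between $[K,K^n]$ and the set of partitions of $\{1,\dots,n\}$, whose cardinality is $B_n$ by definition. To a partition $\pi$ with blocks $B_1,\dots,B_m$ I associate the subalgebra $A_\pi$ of those tuples that are constant on each block; it contains the diagonal and is spanned over $K$ by the $m$ block-indicator idempotents. Conversely, a subalgebra $A\in[K,K^n]$ is a finite-dimensional commutative $K$-algebra that is reduced, being a subring of the reduced ring $K^n$, hence a finite product of fields. Each of these fields admits a $K$-algebra map into a coordinate field $K$ (a projection), which forces it to equal $K$; thus $A\cong K^m$ and its primitive idempotents are orthogonal $0/1$-vectors summing to $1$, i.e. the indicators of the blocks of a partition $\pi_A$. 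Since these idempotents already span $A$, we get $A=A_{\pi_A}$, and $\pi\mapsto A_\pi$, $A\mapsto\pi_A$ are mutually inverse, giving $\#[K,K^n]=B_n$.

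For the third assertion I would use that the ideals of a finite product of rings are exactly the products of ideals of the factors. Since every factor of $K^n$ is the field $K$, each factor contributes an ideal in $\{0,K\}$, so the ideals of $K^n$ correspond bijectively to subsets of $\{1,\dots,n\}$; there are $2^n$ of them.

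The only step requiring care is the reverse direction of the subalgebra bijection, where one must preclude a $K$-subalgebra of $K^n$ from involving a proper finite extension of $K$. This is precisely the point at which the elementary fact that a field $L\supseteq K$ admitting a $K$-algebra homomorphism into $K$ must equal $K$ is used; the remaining verifications (freeness, Property UN, and the product decomposition of ideals) are routine.
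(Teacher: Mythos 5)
Your proof is correct, but there is essentially nothing in the paper to compare it with: Proposition~\ref{1.5} is stated as an extract of \cite[Proposition 4.15]{DPP3}, no proof is given in the text, and the Bell-number count is in any case subsumed by Theorem~\ref{PROD}. What you have written is a self-contained verification using only the paper's own toolkit, and it is the natural one. For \'etaleness you check Property UN directly via the standard description of $\mathrm{Spec}(R^n)$ together with the localization rule of Remark~\ref{LOCPROD} (so that $R\to (R^n)_Q$ is just $R\to R_P$), and then invoke Lemma~\ref{Seppro}(2) to pass from ``finite, projective, unramified'' to \'etale; this is legitimate since $R^n$ is free, and triviality of the cover is indeed definitional once the cover is \'etale and finite. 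For the counts, the partition correspondence is the classical argument, and the $2^n$ ideals follow from the product decomposition of ideals. The one step you assert without full justification is that \emph{each} field factor $L_i$ of a subalgebra $A\cong L_1\times\cdots\times L_m\subseteq K^n$ actually receives some coordinate projection: to see this, note that the primitive idempotent $f_i$ supporting $L_i$ is a nonzero $0/1$-vector of $K^n$, hence has some coordinate $j$ equal to $1$; then $p_j|_A$ does not vanish on $f_i$, so its kernel is the maximal ideal of $A$ complementary to $L_i$, giving $L_i\cong A/\ker(p_j|_A)\cong \mathrm{im}(p_j|_A)=K$. With that line added your argument is complete, and it has the merit of making the paper independent of the external reference at this point.
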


\subsection{Some results on extensions having FIP}
Here are some notation, definitions and results needed in the sequel.

Let $R$ be a ring. As usual, Spec$(R)$ (resp$.$ Max$(R)$) denotes the set of all prime ideals (resp$.$ maximal ideals) of $R$. If $I$ is an ideal of $R$, we set ${\mathrm V}_R(I):=\{P\in\mathrm{Spec}(R)\mid I\subseteq P\}$ and $\mathrm{D}_R(I)$ is its complement. If $R\subseteq S$ is a ring extension and $P\in\mathrm{Spec}(R)$, then $S_P$ is the localization $S_{R\setminus P}$. We denote the integral closure of $R$ in $S$ by $\overline R$. Recall that if $E$ is an $R$-module, its {\it support} $\mathrm{Supp}_R(E)$ is the set of prime ideals $P$ of $R$ such that $E_P\neq 0$ and $\mathrm{MSupp}_R(E):=\mathrm{Supp}_R(E)\cap\mathrm{Max}(R)$ is also the set of maximal elements of $\mathrm{Supp}_R(S/R)$, because a support is stable under specializations.  
Finally,  $\subset$ denotes proper inclusion and $|X|$ denotes the cardinality of a set $X$.

\begin{proposition}\label{SUP} Let $R\subseteq S$ be a ring extension.

\item [(1)] If $R\subseteq S$ has FCP (FIP), then  $|\mathrm {Supp}(S/R)|<\infty$.

\item [(2)]If  $|\mathrm {MSupp}(S/R)|<\infty$, then $R\subseteq S$ has FCP (FIP) if and only if $R_M\subseteq S_M$ has FCP (FIP) for each $M\in\mathrm {MSupp}(S/R)$.
\item[(3)] If $R \subseteq S$ is integral, then $R\subseteq S$ has FCP if and only if $R\subseteq S$ is a  finite Gilmer extension.
\item[(4)]  $R\subseteq S$ has FIP if and only if  $R\subseteq \substack{+\\ S}R$, $\substack{+\\ S}R\subseteq \substack{t\\ S}R$ and $ \substack{t\\ S}R \subseteq S$ have FIP.
\end{proposition}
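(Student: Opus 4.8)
Throughout I work in the integral setting in which the canonical decomposition is defined, writing $B:=\substack{+\\ S}R$ and $C:=\substack{t\\ S}R$, so the tower is $R\subseteq B\subseteq C\subseteq S$. The plan is to treat the two implications separately: the forward one is purely formal, and the converse carries all the weight. For the \emph{only if} direction I would simply note that for any intermediate rings $R\subseteq T\subseteq T'\subseteq S$ one has $[T,T']\subseteq[R,S]$; hence finiteness of $[R,S]$ forces finiteness of $[R,B]$, $[B,C]$ and $[C,S]$ simultaneously, with no appeal to the structure of the decomposition.

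For the converse my first, soft, reduction is to pass from FIP to FCP. Each floor has FIP, hence FCP, and FCP is stable under composition (\cite{DPP2}; heuristically via Proposition~\ref{SUP}(3) and the remark after Definition~\ref{defG}), so $R\subseteq S$ has FCP and thus finite length. In particular every $U\in[R,S]$ is reached from $R$ by a finite tower of minimal extensions, which is what makes a step-by-step analysis possible; what then remains is to bound the \emph{number} of such $U$.

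To obtain such a bound I would set up a poset map
\[
\Phi:[R,S]\longrightarrow [R,B]\times[B,C]\times[C,S],\qquad \Phi(U)=\bigl(U\cap B,\ (U\cap C)B,\ UC\bigr),
\]
where juxtaposition denotes the subring generated. This $\Phi$ is well defined because seminormalization and $t$-closure are intrinsic and compatible with passage to subextensions, so that the canonical decomposition of each $U$ is exactly $R\subseteq U\cap B\subseteq U\cap C\subseteq U$; in particular $\substack{+\\ U}R=U\cap B\in[R,B]$ and $\substack{t\\ U}R=U\cap C$, whence $(U\cap C)B\in[B,C]$ and $UC\in[C,S]$. Once $\Phi$ is shown to be injective, finiteness of the three target factors (the \emph{if} hypothesis) immediately yields $|[R,S]|<\infty$.

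The injectivity of $\Phi$ is the genuine obstacle, and it is precisely where the general failure of transitivity of FIP must be defeated by the rigidity of the canonical decomposition. My plan is to exploit the orthogonality of the three kinds of minimal extension recorded in Theorem~\ref{CANMIN}: a minimal subextension inside $R\subseteq B$ is ramified, one inside $B\subseteq C$ is decomposed, and one inside $C\subseteq S$ is inert, and these types are mutually exclusive. Concretely, given a minimal step $W\subset W'$ in $[R,S]$, I would use $\substack{+\\ W}R=W\cap B$ and $\substack{t\\ W}R=W\cap C$ to show that a ramified step strictly enlarges the first coordinate of $\Phi$, a decomposed step strictly enlarges the second while fixing the first, and an inert step strictly enlarges the third while fixing the first two. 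Upgrading this from strict monotonicity along covering relations to honest injectivity on the whole poset (and not merely along chains) is the delicate point, and here I expect to lean on the composability machinery of \cite{DPP2}; alternatively one may run the converse as two successive binary composition steps, first $B\subseteq C$ with $C\subseteq S$ and then $R\subseteq B$ with $B\subseteq S$, each governed by the same orthogonality. This is the step I expect to cost by far the most work.
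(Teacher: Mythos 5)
There is a genuine gap, in two respects. First, the statement you were asked to prove has four items, and your proposal addresses only item (4): you say nothing about (1) (finiteness of the support under FCP/FIP), (2) (the local–global criterion over $\mathrm{MSupp}(S/R)$), or (3) (FCP $\Leftrightarrow$ finite Gilmer for integral extensions). The paper disposes of all four items by citation to \cite{DPP2} (Proposition 3.7, Corollary 3.2, Theorem 3.13, Theorem 4.2, Theorem 5.9), so a blind proof must either supply arguments for each item or cite them; yours silently drops three of them. Moreover your treatment of (4) quietly assumes the integral setting, whereas item (4) is stated (and used in the paper, e.g.\ in Proposition~\ref{SUP}(4) as invoked elsewhere) without that hypothesis.

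Second, even for item (4) the heart of the argument is missing. Your ``only if'' direction is fine ($[T,T']\subseteq[R,S]$ for $R\subseteq T\subseteq T'\subseteq S$), and your identifications $\substack{+\\ U}R=U\cap\substack{+\\ S}R$ and $\substack{t\\ U}R=U\cap\substack{t\\ S}R$ can indeed be justified. But the converse is carried entirely by the injectivity of your map $\Phi$, and you concede that you cannot prove it, proposing instead to ``lean on the composability machinery of \cite{DPP2}.'' That is circular: item (4) \emph{is} Theorem 5.9 of \cite{DPP2}, so deferring its crucial step to that paper's machinery is assuming what is to be proved. The gap is not cosmetic: showing that each type of minimal step strictly enlarges one coordinate of $\Phi$ only yields that $\Phi$ separates \emph{comparable} elements of $[R,S]$ (strict monotonicity along chains); two incomparable subalgebras $U\neq V$ could a priori have $\Phi(U)=\Phi(V)$, and ruling this out is precisely the content of the cited theorem. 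The same circularity infects your soft reduction to FCP, since stability of FCP under composition along the canonical decomposition is again a nontrivial result of \cite{DPP2} (part of Theorem 3.13/4.2), not a formality.
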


\begin{proof}  Read \cite[Proposition 3.7,  Corollary 3.2, Theorem 3.13,  Theorem 4.2, Theorem 5.9]{DPP2}.
\end{proof}

\begin{corollary}\label{SUPF}  If $R\subseteq S$ is an integral extension that has FIP, then $R\subseteq S$ is finite and a composite of $n$  finite  minimal  extensions  $R_{i} \subset R_{i+1}$ with conductor $M_i$. Moreover, $\mathrm {Supp}_R(S/R)$ is a finite set; in fact, $\mathrm{V}_R((R:S))= \mathrm {Supp}_R(S/R)=\{M_i\cap R\mid i=0,\ldots,n-1\}\subseteq \mathrm{Max}(R)$.
\end{corollary}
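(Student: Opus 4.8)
The plan is to assemble the statement from the machinery already in place. Let $R\subseteq S$ be integral with FIP. First I would reduce FIP to FCP: an extension with FIP certainly has FCP (any chain is in particular a subset of the finite set $[R,S]$, hence finite). By Proposition~\ref{SUP}(3), since $R\subseteq S$ is integral with FCP, it is a finite Gilmer extension; in particular $R\subseteq S$ is \emph{finite}. This settles the finiteness claim. Because $R\subseteq S$ has FCP, any maximal chain is finite, and by the key connection recalled just before Theorem~\ref{MIN}, such a chain $R=R_0\subset R_1\subset\cdots\subset R_n=S$ exhibits $R\subseteq S$ as a composite of $n$ minimal extensions $R_i\subset R_{i+1}$; each is finite (being a minimal subextension of a finite extension). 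By Theorem~\ref{MIN}, each such finite minimal extension has conductor $M_i:=(R_i:R_{i+1})\in\mathrm{Max}(R_i)$, which furnishes the $M_i$ in the statement.

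It remains to identify $\mathrm{Supp}_R(S/R)$. I would prove the three-way equality $\mathrm{V}_R((R:S))=\mathrm{Supp}_R(S/R)=\{M_i\cap R\mid i=0,\ldots,n-1\}$ by a chain of inclusions. The inclusion $\mathrm{Supp}_R(S/R)\subseteq\mathrm{V}_R((R:S))$ is standard: if $P\in\mathrm{Supp}_R(S/R)$ then $(S/R)_P\neq 0$, so $R_P\subsetneq S_P$, forcing $(R:S)\subseteq P$ (otherwise the conductor would meet $R\setminus P$ and localizing would give $R_P=S_P$, since $S$ is finite over $R$). For the reverse containment of supports with the set of contracted conductors, I would work minimal step by minimal step: by Theorem~\ref{MIN}, for each $i$ one has $\mathrm{Supp}(R_{i+1}/R_i)=\{M_i\}$ as a subset of $\mathrm{Spec}(R_i)$; contracting to $R$ and using that the support of $S/R$ is the union of the (images of the) supports of the successive quotients $R_{i+1}/R_i$, I obtain $\mathrm{Supp}_R(S/R)=\{M_i\cap R\mid i\}$.

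The step requiring care is relating $\mathrm{Supp}_R(S/R)$ to the supports of the intermediate quotients $R_{i+1}/R_i$. The short exact sequences $0\to R_{i+1}/R_i\to S/R_i\to S/R_{i+1}\to 0$ (as $R$-modules) give, by additivity of support over such sequences, $\mathrm{Supp}_R(S/R_i)=\mathrm{Supp}_R(R_{i+1}/R_i)\cup\mathrm{Supp}_R(S/R_{i+1})$, and an induction collapses this to $\mathrm{Supp}_R(S/R)=\bigcup_{i=0}^{n-1}\mathrm{Supp}_R(R_{i+1}/R_i)$. One then notes $\mathrm{Supp}_R(R_{i+1}/R_i)=\{M_i\cap R\}$: localizing at any $P\not\supseteq M_i\cap R$ kills the quotient because $M_i$ (the conductor over $R_i$) becomes the unit ideal, while at $P=M_i\cap R$ it survives since the minimal extension is nontrivial there. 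Finally $\mathrm{V}_R((R:S))=\{M_i\cap R\mid i\}$ follows because $(R:S)$ is, up to radical, the intersection of the contracted conductors, and each $M_i\cap R$ is maximal in $R$ (as $R\subseteq S$ is integral, lying-over and incomparability force $M_i\cap R\in\mathrm{Max}(R)$), so $\mathrm{V}_R((R:S))$ is exactly this finite set of maximal ideals. The containment $\{M_i\cap R\}\subseteq\mathrm{Max}(R)$ asserted at the end is thus immediate from integrality.
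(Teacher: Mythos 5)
Your proposal is correct in substance, and it takes a genuinely different route from the paper: the paper's proof of this corollary is purely a citation, deferring the finiteness/decomposition/support statement to \cite[Corollary 3.2]{DPP2} and the identification $\mathrm{V}_R((R:S))=\mathrm{Supp}_R(S/R)$ to Bourbaki's fact that $\mathrm{Supp}(E)=\mathrm{V}(\mathrm{Ann}(E))$ for a finitely generated module $E$. You instead reassemble the whole statement from results internal to the paper (FIP $\Rightarrow$ FCP, Proposition~\ref{SUP}(3), the maximal-chain remark preceding Theorem~\ref{MIN}, and Theorem~\ref{MIN} itself), together with the exact-sequence computation $\mathrm{Supp}_R(S/R)=\bigcup_{i}\mathrm{Supp}_R(R_{i+1}/R_i)=\{M_i\cap R\mid i\}$, which is a clean and correct way to globalize the local information $\mathrm{Supp}_{R_i}(R_{i+1}/R_i)=\{M_i\}$. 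What your approach buys is a self-contained argument; what it costs is that you must supply by hand exactly the content of the paper's two citations, and this is where your write-up has two soft spots.

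First, the parenthetical ``each is finite (being a minimal subextension of a finite extension)'' is not a proof: a submodule of a finitely generated module need not be finitely generated when $R$ is not Noetherian. The correct justification, available in the paper, is that minimal extensions are monogenic, so $R_{i+1}=R_i[t]$ with $t$ integral over $R_i$, whence $R_i\subset R_{i+1}$ is finite (and then each $R_i$ is finite over $R$ by induction, which your support computation also uses). Second, your final step asserts, without proof, that $(R:S)$ has radical $\bigcap_i(M_i\cap R)$; this is precisely the point the paper's Bourbaki citation covers, since $(R:S)=\mathrm{Ann}_R(S/R)$ and $S/R$ is finitely generated. If you want to keep the argument self-contained, close it as follows: $(R:S)R_{i+1}\subseteq(R:S)S\subseteq R\subseteq R_i$ gives $(R:S)\subseteq M_i\cap R$ for every $i$; conversely, if $x\in\bigcap_i(M_i\cap R)$ then $xR_{i+1}\subseteq R_i$ for all $i$, so $x^nS=x^nR_n\subseteq R$, i.e. $x\in\sqrt{(R:S)}$; hence $\sqrt{(R:S)}=\bigcap_i(M_i\cap R)$ and $\mathrm{V}_R((R:S))=\{M_i\cap R\mid i\}$, since a prime containing a finite intersection of maximal ideals must equal one of them. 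With these two repairs your proof is complete and arguably more informative than the paper's.
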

\begin{proof} Read \cite[Corollary 3.2]{DPP2}, \cite[Proposition 17, p. 133]{Bki AC}.
\end{proof}

We note here two useful results.

\begin{proposition}\label{QUOT} Let $R\subseteq S$ be an extension that has FIP.  Let also $J$ be an ideal of $S$ and $I:=J\cap R$. Then $R/I \subseteq S/J$ has FIP.
\end{proposition}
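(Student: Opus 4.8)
The plan is to exploit the canonical surjection $\pi\colon S\to S/J$ and the order-preserving correspondence it induces between $R$-subalgebras of $S$ and $(R/I)$-subalgebras of $S/J$. Since FIP for $R\subseteq S$ means $|[R,S]|<\infty$, it suffices to produce an injection of $[R/I,S/J]$ into $[R,S]$; finiteness of $[R/I,S/J]$ then follows at once.

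First I would fix notation: identify the image of $R$ in $S/J$ with $(R+J)/J$, which is isomorphic to $R/(R\cap J)=R/I$ precisely because $I=J\cap R$. Thus ``$R/I\subseteq S/J$'' really means the extension $(R+J)/J\subseteq S/J$. Next, for any $\overline T\in[R/I,S/J]$ I would set $T:=\pi^{-1}(\overline T)$. Being the preimage of a subring under a ring homomorphism, $T$ is a subring of $S$; and since $\pi(R)=R/I\subseteq\overline T$, it contains $R$. Hence $T\in[R,S]$ (in fact $T\in[R+J,S]$, as $J=\pi^{-1}(0)\subseteq T$). This defines a map $\Phi\colon[R/I,S/J]\to[R,S]$, $\overline T\mapsto\pi^{-1}(\overline T)$.

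The key point is that $\Phi$ is injective. Because $\pi$ is surjective, one has $\pi(\pi^{-1}(\overline T))=\overline T$ for every $\overline T\in[R/I,S/J]$, so $\overline T$ is recovered from $\Phi(\overline T)$ by applying $\pi$; consequently $\Phi(\overline T_1)=\Phi(\overline T_2)$ forces $\overline T_1=\overline T_2$. Therefore
$$
   |[R/I,S/J]| \;\le\; |[R,S]| \;<\;\infty,
$$
which is exactly the assertion that $R/I\subseteq S/J$ has FIP.

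I do not anticipate a genuine obstacle here: the argument is a clean instance of the lattice-isomorphism correspondence for quotient maps, and the only points demanding care are bookkeeping ones, namely that $I=J\cap R$ is what makes $R/I$ embed faithfully into $S/J$ as $(R+J)/J$, and that preimages of subalgebras are subalgebras containing $R$. No finiteness or integrality hypothesis beyond FIP of $R\subseteq S$ is needed, and the surjectivity of $\pi$ alone yields the injectivity of $\Phi$.
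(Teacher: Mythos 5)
Your proof is correct and follows essentially the same route as the paper: both arguments rest on the lattice correspondence under the quotient map $\pi\colon S\to S/J$, identifying $R/I$ with $(R+J)/J$ via the Noether isomorphism and recognizing every member of $[R/I,S/J]$ as $T/J$ for some $T\in[R,S]$ containing $J$. Your injection $\overline T\mapsto\pi^{-1}(\overline T)$ is just the inverse direction of the paper's observation, so the two proofs are the same in substance.
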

\begin{proof} It is enough to observe that an element of $[R/I,S/J]$ is of the form $T/J$ where $T\in [R,S]$ contains $J$. By a Noether  theorem, we have $T/J= (T+J)/J\cong T/(J\cap T)$.
\end{proof}

\begin{remark}\label{LOCPROD} We will also use the following result. If $R_1,\ldots,R_n$ are finitely many rings,  the ring $R_1\times \cdots \times R_n$ localized at the prime ideal $P_1\times R_2\times\cdots \times R_n$ is isomorphic to $(R_1)_{P_1}$ for $P_1 \in \mathrm{Spec}(R_1)$. This rule works for any other prime ideal of the product.  
\end{remark}

\subsection{Localization with respect to the conductor}

The above results suggest  to introduce the following definition.
\begin{definition}\label{SEM} Let $R\subset S$ be a ring extension   such that $\mathrm{MSupp}(S/R) $ has finitely many elements $M_1,\ldots,M_n$. For example, $\mathrm{MSupp}(S/R) = \mathrm{Supp}(S/R)$ is finite if $R\subseteq S$ is a finite Gilmer extension.

 We set 
$R_{(R:S)} := R_{R\setminus (M_1\cup \cdots \cup M_n)}$ and $ U_{(R:S)}:= U_{R\setminus(M_1\cup \cdots\cup M_n)}\cong U\otimes_RR_{(R:S)}$ for any $R$-module $U$. 
The ring $R_{(R:S)}$ is semi-local.
\end{definition}
Note that  $ 1+ (R:S) \subseteq R\setminus ( M_1\cup \cdots\cup M_n)$.  In case $\mathrm{V}_R((R:S))= \mathrm {Supp}_R(S/R)$, (for example if $R\subseteq S$ is finite) the above multiplicatively closed subsets have the same saturation, so  that $R_{1+ (R:S)} \to R_{(R:S)}$ is an isomorphism.

In case $R\subseteq S$ is finite and $\mathrm{MSupp}(S/R)$ is finite, we have:

 \noindent  $(R_{(R:S)}: S_{(R:S)}) = (R:S)_{ (R:S)}$ and    $\mathrm{MSupp}(S_{(R:S)}/R_{(R:S)}) = \mathrm{Max}(R_{(R:S)})$.

The extensions $R_{(R:S)} \subset  S_{(R:S)}$ and $R\subset S$, with the hypotheses of Definition~\ref{SEM}, share  usual properties. 

\begin{proposition}\label{PROPN} Let $R \subseteq S$ be an  extension such that $|\mathrm{MSupp}(S/R)|$ $<\infty$. Then $R_{(R:S)} \subseteq  S_{(R:S)}$  verifies $\mathcal P$ if and  only if $R\subseteq S$ verifies $\mathcal P$, when $\mathcal P$ is one of the following properties:  finite, of finite type, Gilmer property, FIP, flatness, unramified,
 seminormal, infra-integral, subintegral. Note  that $R= S\Leftrightarrow R_{(R:S)}=S_{(R:S)}$.
\end{proposition}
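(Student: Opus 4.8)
The plan rests on one structural observation and one small detection lemma. Write $\Sigma:=R\setminus(M_1\cup\cdots\cup M_n)$, so that $R_{(R:S)}=R_\Sigma$ and $S_{(R:S)}=S_\Sigma$, and note that a prime $P$ of $R$ survives in $R_\Sigma$ (i.e. $P\cap\Sigma=\varnothing$) exactly when $P\subseteq M_i$ for some $i$. Since a support is stable under specialization and $\mathrm{MSupp}(S/R)=\{M_1,\dots,M_n\}$, every $P\in\mathrm{Supp}(S/R)$ lies under some $M_i$; thus \emph{$S/R$ is supported only at surviving primes}. The lemma I would record first is a detection principle: for any $R$-submodule $N$ with $R\subseteq N\subseteq S$ one has $N=R\Leftrightarrow N_\Sigma=R_\Sigma$ and $N=S\Leftrightarrow N_\Sigma=S_\Sigma$. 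Indeed $N/R$ and $S/N$ are a submodule and a quotient of $S/R$, hence again supported at surviving primes, so their vanishing after $\Sigma$-localization forces them to vanish outright. The case $N=R$ (or $N=S$) gives at once the final assertion $R=S\Leftrightarrow R_\Sigma=S_\Sigma$.

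Every forward implication is then routine: $\Sigma$-localization is an exact, flat base change, so it preserves finiteness, finite type, flatness and the vanishing of $\Omega(S|R)$ (K\"ahler differentials commuting with localization), and by \cite[Proposition 2.9]{S} and \cite[Proposition 3.6]{Pic 1} it commutes with seminormalization and $t$-closure; FIP descends because $R_\Sigma\subseteq S_\Sigma$ is itself a localization. For the backward implications of the ``module-type'' properties I would invoke the detection lemma. If $S_\Sigma$ is generated over $R_\Sigma$ by elements $s_1,\dots,s_k\in S$ (denominators being units, the numerators suffice), then $N:=R+Rs_1+\cdots+Rs_k$ (resp. $N:=R[s_1,\dots,s_k]$) satisfies $N_\Sigma=S_\Sigma$, whence $N=S$ and $R\subseteq S$ is finite (resp. of finite type). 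For the three closure properties I use that $R\subseteq S$ is subintegral, infra-integral, seminormal precisely when $\substack{+\\ S}R=S$, $\substack{t\\ S}R=S$, $\substack{+\\ S}R=R$; since these closures commute with $\Sigma$-localization, applying the detection lemma to $N=\substack{+\\ S}R$ or $N=\substack{t\\ S}R$ transports each equality back from $R_\Sigma\subseteq S_\Sigma$, integrality coming for free as these closures lie in $\overline R$. Finally, both extensions have maximal support $\{M_i\}$ with the same localizations $R_{M_i}\subseteq S_{M_i}$, so Proposition~\ref{SUP}(2) yields the FIP equivalence directly.

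For the genuinely local properties, flatness and unramifiedness, I would argue prime by prime. At a surviving prime $P\subseteq M_i$ the localization $R_P\subseteq S_P$ is read off $R_\Sigma\subseteq S_\Sigma$; at a non-surviving prime $P$ we have $P\notin\mathrm{Supp}(S/R)$, so $R_P=S_P$ and every such property holds there trivially. Hence, once flatness (resp. Property UN at the primes of $S$ over surviving $P$) holds after $\Sigma$-localization, it holds at all primes, and the finite-type clause already secured above upgrades UN to unramifiedness.

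The delicate property is the Gilmer one, where I must compare the two nuclei although in general only $(R:S)_\Sigma\subseteq(R_\Sigma:S_\Sigma)$. The device I expect to be decisive is to show that, on either side, an Artinian nucleus forces $J^N\subseteq(R:S)$ for some $N$, where $J:=M_1\cap\cdots\cap M_n$. If $R/(R:S)$ is Artinian, writing it as a product of local rings and using $(R:S)=\mathrm{Ann}_R(S/R)$ shows every maximal ideal of the nucleus lies in $\mathrm{MSupp}(S/R)$ (a vanishing factor would contribute a nonzero annihilator summand), so its Jacobson radical is $J/(R:S)$ and nilpotence gives $J^N\subseteq(R:S)$; if instead $R_\Sigma/(R_\Sigma:S_\Sigma)$ is Artinian, then $J^NR_\Sigma=\mathrm{Jac}(R_\Sigma)^N\subseteq(R_\Sigma:S_\Sigma)$ kills $(S/R)_\Sigma$, and the detection lemma applied to the submodule $J^N(S/R)$ forces $J^N\subseteq(R:S)$. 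Once $J^N\subseteq(R:S)$, the module $S/R$ is an $R/J^N$-module; decomposing $R/J^N\cong\prod_i R/M_i^N$ by the Chinese remainder theorem and matching it with $R_\Sigma/\mathrm{Jac}(R_\Sigma)^N\cong\prod_i R_{M_i}/M_i^NR_{M_i}$, I obtain a canonical isomorphism $R/(R:S)\cong R_\Sigma/(R_\Sigma:S_\Sigma)$ of the two nuclei, so one is Artinian iff the other is. This reduction, turning the Gilmer hypothesis into the containment $J^N\subseteq(R:S)$ that makes the conductor localize perfectly, is the one genuinely non-formal step and the place where the finiteness of $\mathrm{MSupp}(S/R)$ is essential.
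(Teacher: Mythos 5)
Your proof is correct, and on most of the properties it follows the same architecture as the paper's own proof: the paper also gets the FIP equivalence from Proposition~\ref{SUP}(2), treats flatness and unramifiedness prime by prime using the fact that $R_P=S_P$ (and $R_P=S_Q$) for every $P\notin\mathrm{Supp}_R(S/R)$, obtains the finiteness properties by gluing generators over the finitely many maximal ideals of the support (Lemma~\ref{FIN}, which your detection lemma repackages in a slightly more flexible form), and ultimately rests on the commutation of seminormalization and $t$-closure with localization for the last three properties. The genuine divergence is the Gilmer property. The paper disposes of it in one sentence --- a ring is Artinian if and only if it has finitely many maximal ideals and finite-length localizations --- backed by the preliminary remark that the conductor of a \emph{finite} extension commutes with localization; as written this is complete only once one knows that the conductor localizes, i.e. $(R:S)_{(R:S)}=(R_{(R:S)}:S_{(R:S)})$, and that every prime of the nucleus survives in $R_{(R:S)}$, whereas in general one only has the inclusion $(R:S)_{(R:S)}\subseteq(R_{(R:S)}:S_{(R:S)})$. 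Your device --- showing that an Artinian nucleus on either side forces $J^N\subseteq(R:S)$ for $J:=M_1\cap\cdots\cap M_n$, after which every element of $R\setminus(M_1\cup\cdots\cup M_n)$ acts invertibly on the $R/J^N$-module $S/R$ and the two nuclei become canonically isomorphic --- closes exactly this gap, and it does so with no finiteness hypothesis on the extension. So your route costs more work on the Gilmer case but is self-contained and strictly more careful than the paper's sketch, which tacitly leans on finiteness of $S$ over $R$ (the situation in which the proposition is chiefly applied) to make the conductor behave under localization.
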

\begin{proof} We first observe that  a maximal ideal of $R$ is either in $\mathrm{Supp}(S/R)$ or not. Moreover, the conductor of a finite extension commutes to the formation of localizations. Finiteness properties hold because of Lemma~\ref{FIN}. For the FIP property, see Proposition~\ref{SUP}(2). For the Gilmer property, it is enough to observe that a ring $R$ is Artinian if and only if   $\mathrm{Max}(R)$ is finite and the length of $R_M$ is finite for each $M\in \mathrm{Max}(R)$. For some other properties, we may use the following fact.  By the very definition of a support, $R_P=S_P$ for any  $P\notin \mathrm{Supp}_R(S/R)$ and also $R_P= S_Q$  for each $Q\in  \mathrm{Spec}(S) $ lying over $P$. Therefore,  $R\subseteq S$ is flat (respectively, unramified) if and only if so is $R_{(R:S)} \subseteq  S_{(R:S)}$.   
For the seminormal, infra-integral and subintegral properties, use the following statement. Let  $R \subseteq S$ be an extension. Then the map $\mathrm{D}_S((R:S)) \to \mathrm{D}_R((R:S))$ induced by  $\mathrm{Spec}(S) \to \mathrm{Spec}(R)$ is an homeomorphism and $R_P\to S_Q$ is an isomorphism for each $Q\in \mathrm{D}_S((R:S))$ and $P:= Q\cap R$.
 \end{proof}
 
 \begin{remark}\label{LOCAL} (1) Let $R\subseteq S$ be an extension such that $R_{(R:S)} \subseteq S_{(R:S)}$ is defined  and $\mathcal P$ a property of ring morphisms verified by ring isomorphisms. Then $R\subseteq S$ verifies locally $\mathcal P$ if and only if $R_{(R:S)}\subseteq S_{(R:S)}$ verifies locally $\mathcal P$.
 
 (2) We can add in Proposition~\ref{PROPN}  two properties $\mathcal P$: ``finite and projective'' and ``finite and \'etale'' when $R\subseteq S$ is of finite presentation (see Theorem~\ref{2.51}). This holds if $R$ is a ring such that flat modules of finite type are projective {\em i.e} is a $ \mathcal S$-ring. Known cases of $\mathcal S$-rings are semi-local rings, and rings whose total quotient ring is semi-local, as Noetherian rings and integral domains (see \cite[Theorem 2]{E}). 
 \end{remark}
 
\begin{lemma}\label{FIN} Let $R \subseteq S$ be a ring extension such that $|\mathrm{MSupp}(S/R)| < \infty$. Then $R_M\subseteq S_M$ is finite (of finite type) for each $M\in \mathrm{MSupp}(S/R)$ if and only if $R \subseteq S$ is finite (of finite type).
\end{lemma}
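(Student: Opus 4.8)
The plan is to prove the nontrivial (``if'') implication by a local-to-global gluing argument resting on the fact, noted just before the statement, that $\mathrm{MSupp}(S/R)=\{M_1,\ldots,M_n\}$ is precisely the set of maximal elements of $\mathrm{Supp}_R(S/R)$, together with the stability of supports under specialization. The reverse (``only if'') implication is the easy one: if $R\subseteq S$ is finite (of finite type), then each localization $R_{M}\subseteq S_{M}$ inherits the same property, since finiteness and finite type are preserved under the base change $R\to R_{M}$.

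For the ``if'' direction in the \emph{finite} case, I would first, for each $i$, pick a finite generating set of the $R_{M_i}$-module $(S/R)_{M_i}=S_{M_i}/R_{M_i}$. Each generator may be taken to be the image of an element of $S$, because multiplying a generator $s/u$ (with $s\in S$, $u\in R\setminus M_i$) by the unit $u$ of $R_{M_i}$ does not change the $R_{M_i}$-submodule it generates. Collecting these elements over the finitely many indices $i$ yields finitely many $x_1,\ldots,x_m\in S$, and I then set $N:=R+Rx_1+\cdots+Rx_m$, a finitely generated $R$-submodule of $S$ containing $R$. By construction $N_{M_i}=S_{M_i}$ for every $i$, so $(S/N)_{M_i}=0$. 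In the \emph{finite type} case the same bookkeeping applies with algebras in place of modules: I would choose finitely many $y_1,\ldots,y_t\in S$ whose images algebra-generate each $S_{M_i}$ over $R_{M_i}$ (again clearing denominators, using $R_{M_i}[s/u]=R_{M_i}[s]$ since $u$ is invertible in $R_{M_i}$), and put $B:=R[y_1,\ldots,y_t]$, a finitely generated $R$-subalgebra with $B_{M_i}=S_{M_i}$, hence $(S/B)_{M_i}=0$.

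The crux, common to both cases, is the following vanishing principle. Writing $E$ for $S/N$ (resp.\ $S/B$), the inclusion $R\subseteq N$ (resp.\ $R\subseteq B$) gives a surjection of $R$-modules $S/R\twoheadrightarrow E$, whence $\mathrm{Supp}_R(E)\subseteq\mathrm{Supp}_R(S/R)$. Suppose $E\neq 0$; then $\mathrm{Supp}_R(E)\neq\emptyset$, so there is some $P\in\mathrm{Supp}_R(E)\subseteq\mathrm{Supp}_R(S/R)$. Since $M_1,\ldots,M_n$ are the maximal elements of $\mathrm{Supp}_R(S/R)$, we have $P\subseteq M_i$ for some $i$; as $\mathrm{Supp}_R(E)$ is stable under specialization, this forces $M_i\in\mathrm{Supp}_R(E)$, i.e.\ $E_{M_i}\neq 0$, contradicting $(S/N)_{M_i}=0$ (resp.\ $(S/B)_{M_i}=0$). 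Hence $E=0$, that is $S=N$ (resp.\ $S=B$), proving that $R\subseteq S$ is finite (resp.\ of finite type).

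I expect the main obstacle to be organizational rather than deep: one must perform the clearing-of-denominators step uniformly across all the $M_i$, so that a single finite subset of $S$ works simultaneously at every $M_i$, and one must carefully invoke that $\mathrm{MSupp}(S/R)$ is exactly the set of maximal members of $\mathrm{Supp}_R(S/R)$ in order to feed $P$ into the specialization argument. Note that no finiteness hypothesis on the residual extensions or on $S$ itself is needed beyond $|\mathrm{MSupp}(S/R)|<\infty$.
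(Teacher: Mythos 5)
Your proof is correct and takes essentially the same route as the paper: the paper likewise clears denominators to obtain, for each $M\in\mathrm{MSupp}(S/R)$, a finite set $\mathcal G_M\subseteq S$ whose image generates $S_M$ over $R_M$, and then asserts that $\bigcup\,[\mathcal G_M \mid M\in\mathrm{MSupp}(S/R)]$ generates $S$ over $R$, treating the finite-type case in the same way. Your support/specialization argument (showing $S/N$, resp. $S/B$, has empty support) is just the verification that the paper compresses into the word ``clearly'', so the two proofs coincide in substance.
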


\begin{proof}  Let  $\phi_M: S\to S_M$ be the natural map for  $M \in \mathrm{MSupp}(S/R)$ and assume that $R_M\subseteq S_M$ is finite for each $M\in \mathrm{MSupp}(S/R)$. There is a finite system $\mathcal G_M$ of  elements of $S$  such that $\phi_M(\mathcal G_M)$  generates $S_M$ over $R_M$. Now $\cup [\mathcal G_M \mid M \in \mathrm{MSupp}(S/R)]$ clearly generates $S$ over $R$ as a module. For the finite type property, the proof is similar.
\end{proof}

The following definition will also be used.

\begin{definition}\label{SEMINF} Let $R\subseteq S$ be an extension with $|\mathrm{MSupp}(S/R)|< \infty$. We set $\mathrm{MSupp}^* (S/R): = \{M \in \mathrm{MSupp}(S/R) \mid |R_M| = \infty\}$. If $U$ is any $R$-module, we also set $U_{(R:S)^*} = U_{R\setminus  (M_1\cupÊ\cdots \cup M_p)}$, where $\mathrm{MSupp}^*(S/R) = \{M_1,\dots, M_p\}$.
\end{definition}
\begin{proposition}\label{FIPINF} Let $R\subseteq S$ be a finite extension, with $|\mathrm{MSupp}(S/R)|$

\noindent $< \infty$. Then $R\subseteq S$ has FIP (respectively, is a Gilmer extension) if and only if $R_{(R:S)^*} \subseteq S_{(R:S)^*}$ has FIP (respectively, is a Gilmer extension).
\end{proposition}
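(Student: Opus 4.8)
The plan is to reduce both sides to the very same collection of local conditions, indexed by the maximal ideals in $\mathrm{MSupp}^*(S/R)$, while discarding the remaining maximal ideals of $\mathrm{MSupp}(S/R)$ as imposing no constraint. Write $\mathrm{MSupp}^*(S/R) = \{M_1,\ldots,M_p\}$ and $\Sigma := R\setminus(M_1\cup\cdots\cup M_p)$, so that $R_{(R:S)^*} = R_\Sigma$ and $S_{(R:S)^*} = S_\Sigma$. Since $R\subseteq S$ is finite with $|\mathrm{MSupp}(S/R)|<\infty$, Proposition~\ref{SUP}(2) lets me test the FIP at each $M\in\mathrm{MSupp}(S/R)$ separately. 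For the Gilmer property, since $R\subseteq S$ is finite we have $\mathrm{Supp}(S/R) = \mathrm{V}_R((R:S))$, so $\mathrm{Max}(R/(R:S)) = \mathrm{MSupp}(S/R)$ is finite; hence, by the criterion ``$R$ is Artinian iff $\mathrm{Max}(R)$ is finite and $R_M$ has finite length for each $M$'' recalled in the proof of Proposition~\ref{PROPN}, being a Gilmer extension amounts to requiring that $R_M/(R:S)R_M$ have finite length at each $M\in\mathrm{MSupp}(S/R)$.

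Next I would isolate the maximal ideals lying in $\mathrm{MSupp}(S/R)\setminus\mathrm{MSupp}^*(S/R)$. For such an $M$ one has $|R_M|<\infty$, and since $R_M\subseteq S_M$ is finite the ring $S_M$ is finite as well; consequently $[R_M,S_M]$ is finite, so $R_M\subseteq S_M$ automatically has FIP, and $R_M/(R:S)R_M$, being a finite ring, has finite length. Thus neither the FIP condition nor the Gilmer condition at these ``finite'' maximal ideals imposes anything, and both properties for $R\subseteq S$ are equivalent to the corresponding local conditions at $M_1,\ldots,M_p$ alone.

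Finally I would match these with the localized extension. The primes of $R_\Sigma$ are those $P\in\mathrm{Spec}(R)$ with $P\subseteq M_1\cup\cdots\cup M_p$, whence by prime avoidance $P\subseteq M_i$ for some $i$; therefore $\mathrm{Max}(R_\Sigma) = \{M_1R_\Sigma,\ldots,M_pR_\Sigma\}$, with $(R_\Sigma)_{M_iR_\Sigma} = R_{M_i}$ and $(S_\Sigma)_{M_iR_\Sigma} = S_{M_i}$. As each $M_i\in\mathrm{MSupp}(S/R)$, we get $\mathrm{MSupp}(S_\Sigma/R_\Sigma) = \mathrm{Max}(R_\Sigma)$, a finite set, so Proposition~\ref{SUP}(2) applies once more and $R_\Sigma\subseteq S_\Sigma$ has FIP if and only if each $R_{M_i}\subseteq S_{M_i}$ does, which is exactly the condition found above. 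For the Gilmer case I would use that the conductor of a finite extension commutes with localization, so $(R_\Sigma:S_\Sigma) = (R:S)R_\Sigma$, and the length conditions at the $M_i$ coincide on both sides. I expect the only delicate points to be the clean identification of $\mathrm{MSupp}(S_\Sigma/R_\Sigma)$ with $\mathrm{Max}(R_\Sigma)$ and the verification that finiteness of $R_M$ (equivalently, by the remark following Theorem~\ref{FMIR}, finiteness of the residue field together with Artinianness) genuinely forces both FIP and the Gilmer property at the discarded maximal ideals.
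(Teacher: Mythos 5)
Your proposal is correct and follows essentially the same route as the paper's own (much terser) proof: the key observations in both are that at the discarded maximal ideals $M$ with $R_M$ finite, finiteness of $R\subseteq S$ makes $S_M$ a finite ring, so FIP is automatic and $R_M$ has finite length, and then Proposition~\ref{SUP}(2) reduces everything to the maximal ideals in $\mathrm{MSupp}^*(S/R)$. The extra bookkeeping you carry out (prime avoidance to identify $\mathrm{Max}(R_\Sigma)$, matching of localizations, commutation of the conductor) is exactly what the paper leaves implicit, so your write-up is a legitimately fleshed-out version of the same argument.
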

\begin{proof} If $R_M$ is finite, then $R_M\subseteq S_M$  has FIP, because $R\subseteq S$ is finite. Then use Proposition~\ref{SUP}(2) for the FIP property. For the Gilmer property, observe that the length of $R_M$ is finite if $|R_M|$ is finite.
\end{proof}

\begin{lemma}\label{CONZERO} If $R\subset S$ is a  finite and locally free ({\it i.e} flat)  extension, then $(R_P:S_P)={(R:S)}_P= 0$, for each $P \in \mathrm{Supp}(S/R)$. If in addition $R\subset S$ is a Gilmer extension, then $(R_{(R:S)}:S_{(R:S)})=0$, $(R:S) =\ker (R \to R_{(R:S)})$  and $R/(R:S) \cong R_{(R:S)}$.
\end{lemma}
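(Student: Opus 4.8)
The plan is to reduce the first assertion to the identity $(R:S)=\mathrm{Ann}_R(S/R)$ together with a freeness argument over the local ring $R_P$, and to obtain the remaining assertions by localizing everything at the conductor.

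\emph{First assertion.} Since $R\subset S$ is finite, the conductor commutes with localization, so $(R:S)_P=(R_P:S_P)$ for every $P\in\mathrm{Spec}(R)$; it thus suffices to prove $(R_P:S_P)=0$ for $P\in\mathrm{Supp}(S/R)$. I would first note that for any extension one has $(R:S)=\mathrm{Ann}_R(S/R)$, because $r\in(R:S)$ exactly means $r(S/R)=0$. Now fix $P\in\mathrm{Supp}(S/R)$ and set $A:=R_P$, $B:=S_P$. The extension $A\subseteq B$ is finite and flat over the local ring $A$, hence $B$ is a free $A$-module of some finite rank $n$, and $n\geq 2$ since $B\neq A$ (as $P$ lies in the support). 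The image of $1_B$ in the fiber $B\otimes_A\kappa(P)$ is nonzero, the structural map factoring through the field $\kappa(P)$, so $1_B$ extends to a basis; writing $B=A\cdot 1\oplus Ae_2\oplus\cdots\oplus Ae_n$ gives $B/A\cong A^{n-1}$, free of positive rank. Therefore $(A:B)=\mathrm{Ann}_A(B/A)=0$, as desired.

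\emph{Vanishing of the localized conductor, under the Gilmer hypothesis.} Because $S/R$ is finitely generated, $\mathrm{Supp}(S/R)=\mathrm{V}_R((R:S))$, and the Gilmer hypothesis makes $R/(R:S)$ Artinian, so this set is a finite set $\{M_1,\dots,M_n\}$ of maximal ideals and $R_{(R:S)}$ is defined. The localized extension $R_{(R:S)}\subseteq S_{(R:S)}$ is again finite, its conductor is $(R:S)_{(R:S)}$, and its maximal support is all of $\mathrm{Max}(R_{(R:S)})$. Localizing this conductor at the maximal ideal corresponding to $M_i$ recovers $(R_{M_i}:S_{M_i})$, which is $0$ by the first assertion; since an ideal vanishing at every maximal ideal is zero, I conclude $(R_{(R:S)}:S_{(R:S)})=(R:S)_{(R:S)}=(R:S)R_{(R:S)}=0$.

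\emph{The two remaining identities.} I would deduce these from the vanishing $(R:S)R_{(R:S)}=0$ just obtained. The nucleus $\overline R:=R/(R:S)$ is Artinian, hence semilocal with maximal ideals exactly the images of $M_1,\dots,M_n$; consequently the multiplicative set $\Sigma:=R\setminus(M_1\cup\cdots\cup M_n)$ maps into the units of $\overline R$, so $\Sigma^{-1}\overline R\cong\overline R$. Since $(R:S)R_{(R:S)}=0$, we get $R_{(R:S)}=\Sigma^{-1}R=\Sigma^{-1}(R/(R:S))\cong\overline R$, and chasing the canonical map $R\to R_{(R:S)}$ through this isomorphism identifies it with the quotient map $R\to R/(R:S)$. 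Hence $\ker(R\to R_{(R:S)})=(R:S)$ and $R/(R:S)\cong R_{(R:S)}$.

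I expect the only delicate point to be the bookkeeping in the last two paragraphs: one must verify that localizing the conductor of the \emph{localized} extension genuinely returns $(R_{M_i}:S_{M_i})$, so that the first assertion applies, and that the isomorphism $R_{(R:S)}\cong R/(R:S)$ is induced by the natural map. The free-quotient argument of the first paragraph is the conceptual heart, and it is straightforward once $1_B$ is seen to extend to a basis over the local ring $R_P$.
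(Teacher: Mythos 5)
Your proof is correct. For the first assertion you follow essentially the paper's route: localize at $P$, use that a finite flat module over a local ring is free, show that $1$ extends to a basis, and conclude that $(R_P:S_P)=\mathrm{Ann}_{R_P}(S_P/R_P)$ kills a free module of positive rank, hence vanishes. The differences there are cosmetic: you extend $1$ to a basis by the Nakayama/fiber argument (nonvanishing of the image of $1$ in $S_P\otimes_{R_P}\kappa(P)$), where the paper argues on coefficients (write $1=\sum r_ie_i$ and rule out all $r_i\in M$ via lying over); and your argument absorbs the rank-one case, which the paper must treat separately by an integrality argument, since $1$ extending to a basis of a rank-one module forces $S_P=R_P$, contradicting $P\in\mathrm{Supp}(S/R)$. (Be careful that your claim ``$n\geq 2$ since $B\neq A$'' is only \emph{justified} by the basis argument that follows it, so the two sentences should be reordered.) Where you genuinely diverge is the second half. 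The paper notes that, by the first part, $I:=(R:S)$ is locally trivial, hence a \emph{pure} ideal; it deduces that every $x\in I$ satisfies $x=xy$ for some $y\in I$, identifies $I=\ker\bigl(R\to R_{1+I}\bigr)\cong\ker\bigl(R\to R_{(R:S)}\bigr)$, and then invokes Lazard's theorem to see that $R/I\to R_{(R:S)}$ is a flat epimorphism, which is an isomorphism because maximal ideals of $R/I$ lift. You instead exploit the Gilmer hypothesis head-on: the support is a finite set of maximal ideals $M_1,\dots,M_n$, the image of $\Sigma:=R\setminus(M_1\cup\cdots\cup M_n)$ in the Artinian nucleus $R/(R:S)$ consists of units, so $\Sigma^{-1}\bigl(R/(R:S)\bigr)\cong R/(R:S)$, and combining this with $\Sigma^{-1}(R:S)=0$ (from the first part and local-global vanishing) yields all three identities at once. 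Your route is more elementary and self-contained, avoiding purity and flat-epimorphism theory entirely, but it is tied to the semilocal Artinian situation; the paper's purity argument gives the kernel identification $(R:S)=\ker\bigl(R\to R_{1+(R:S)}\bigr)$ for \emph{any} finite flat extension, independently of the Gilmer hypothesis, which is the more conceptual and more reusable statement.
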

\begin{proof} Since $R\subset S$ is finite, we can reduce to the case where $(R,M)$ is local and then $R\subset S$ is free (of rank $n$). Let $\{e_1,\ldots ,e_n \}$ be a basis of $S$ over $R$.  Suppose that $n> 1$, then we can write $1=  \sum r_ie_i$ with $r_i\in R$. If each $r_i$ is in $M,$ then $1$ belongs to some maximal ideal $N$ of $S$ lying over $M$, because $R\subseteq S$ is finite. This is absurd and we can suppose that $r_1$ is  a unit of $R$. Therefore, $e_1= r_1^{-1}(1-\sum_{j\neq 1} r_je_j)$ entails that $\{1,e_2,\ldots,e_n\}$ is a basis of $S$ over $R$. Now for $r\in(R:S)$ we have $re_2= u1$ for some $u\in R$, so that $r= 0$. If $n= 1$,  let $\{e_1\}$ be a basis, then $1=re_1$ for some $r\in R$ shows that $r$ is a unit in $S$. As $R\subset S$ is integral and injective, we see that    $r$ is invertible in $R$ and   then $e_1\in R$. It follows that  $R= S$, an absurdity.

We observe that $I:=(R:S)$ is locally trivial and, in particular is a pure  ideal (such that $R/I$ is flat over $R$). It follows that $I\cap J =IJ$ for each ideal $J$ of $R$ and then for all $x \in I$, there is some $y\in I$, such that $x=xy$. Therefore, $I$ is the kernel of $R \to R_{1+I} \cong R_{(R:S)}$. Then $R/I \to R_{(R:S)}$ is a flat epimorphism by \cite[Corollaire 3.2, p.114]{L}(i). Because maximal ideals of $R/I$ can be lifted up to $R_{(R:S)}$, this last map is an isomorphism.
\end{proof}

\section{\'etale morphisms}

We begin with some results on diagonal extensions  $R \subseteq R^n$, $n\geq 2$ an integer. 
\subsection{Properties of diagonal extensions}

\begin{lemma}\label{2.1}  For $n\geq 2$, $(R:R^n) = 0$,  $R \subseteq R^n$ is locally an isomorphism, whence is infra-integral. Moreover, $R\subseteq R^n$ is seminormal if and only if $R$ is reduced.
\end{lemma}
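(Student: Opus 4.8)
The plan is to establish the four assertions in turn, all by direct computation inside the diagonal embedding $R\hookrightarrow R^n$, $r\mapsto(r,\dots,r)$, which I identify with its image throughout. For $(R:R^n)=0$, recall the conductor is the set of $s=(s_1,\dots,s_n)\in R^n$ with $sR^n\subseteq R$, and I would simply test $s$ against the standard idempotents $e_i=(0,\dots,1,\dots,0)$: the product $se_i=(0,\dots,s_i,\dots,0)$ lies in the diagonal only if $s_i=0$, because $n\ge 2$, so every coordinate of $s$ vanishes. For the local statement, note that $\mathrm{Spec}(R^n)$ is the disjoint union of $n$ copies of $\mathrm{Spec}(R)$, a prime $Q$ corresponding to a prime $\mathfrak p$ of $R$ sitting in a single factor; its contraction along the diagonal is $\mathfrak p$ itself, and by Remark~\ref{LOCPROD} the localization $(R^n)_Q$ is canonically $R_{\mathfrak p}$, with $R_{\mathfrak p}\to (R^n)_Q$ the identity. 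Hence $R_P\to(R^n)_Q$ is an isomorphism for every $Q$, i.e. $R\subseteq R^n$ is locally an isomorphism. Since $R^n$ is $R$-free of rank $n$ the extension is finite, hence integral; and as each localized map is an isomorphism, every residual extension $\kappa(P)\to\kappa(Q)$ is an isomorphism, so the extension is infra-integral.

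For the seminormality equivalence I would work straight from the definition. Writing $b=(b_1,\dots,b_n)$, the conditions $b^2,b^3\in R$ (the diagonal) read $b_i^2=b_j^2$ and $b_i^3=b_j^3$ for all $i,j$, while $b\in R$ means all $b_i$ coincide. Assume $R$ reduced, fix $i,j$, and set $d=b_i-b_j$. The relation $b_i^2=b_j^2$ yields $d^2=b_i^2-2b_ib_j+b_j^2=2b_i^2-2b_ib_j=2b_id$, and combining $b_i^2=b_j^2$ with $b_i^3=b_j^3$ gives $b_i^2d=b_i^3-b_i^2b_j=b_i^3-b_j^2b_j=b_i^3-b_j^3=0$. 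Therefore $d^3=2b_id^2=4b_i^2d=0$, and reducedness forces $d=0$, so $b_i=b_j$ for all $i,j$ and $b\in R$; thus $R\subseteq R^n$ is seminormal. The heart of the whole statement is exactly this identity $d^3=0$, and I expect its clean derivation to be the only non-formal step.

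Conversely, if $R$ is not reduced I would produce a counterexample to seminormality. Taking a nonzero nilpotent and passing to a suitable power produces $s\in R$ with $s\neq 0$ and $s^2=0$; then $b=(s,0,\dots,0)$ satisfies $b^2=b^3=0\in R$ while $b\notin R$, since its coordinates are not all equal (this is where $n\ge 2$ enters). Hence $R\subseteq R^n$ is not seminormal, which completes the equivalence and the proof.
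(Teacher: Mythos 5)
Your proof is correct. It follows the paper's skeleton for the first assertions --- like the paper, you obtain the local isomorphism and infra-integrality from Remark~\ref{LOCPROD}, and your explicit idempotent test showing $(R:R^n)=0$ fills in a detail the paper leaves tacit --- but for the seminormality equivalence you take a more self-contained route. Where the paper simply cites \cite[Lemma 3.1]{S} (in a reduced ring, $x^2=y^2$ and $x^3=y^3$ imply $x=y$), you reprove that fact on the spot via the identity $d^3=4b_i^2d=0$ for $d=b_i-b_j$; and where the paper deduces the converse from the general principle that a seminormal extension has radical conductor in $S$ (so that $0=(R:R^n)$ radical in $R^n$ forces $R$ reduced), you argue contrapositively with the explicit witness $b=(s,0,\dots,0)$ with $s\neq 0$, $s^2=0$. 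The paper's route is shorter and situates the lemma within its standard seminormality toolkit; yours is elementary and citation-free, which makes completely transparent both where the hypothesis $n\geq 2$ enters and exactly how reducedness is used.
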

\begin{proof}   
The extension is  a local isomorphism and infra-integral  by Remark~\ref{LOCPROD}. 

Suppose that $R$ is reduced. If $x, y \in R$ are such that $x^2=y^2$ and $x^3=y^3$, then $x=y$ by \cite[Lemma 3.1]{S}. It follows that $R\subseteq R^n$ is seminormal. Conversely, if $R \subseteq S$ is seminormal, its conductor  $0$ is a radical ideal and $R$ is reduced.
\end{proof}

We will show that an \'etale Gilmer extension is infra-integral if and only if it  is locally a diagonal extension (see Corollary~\ref{2.5}(3)).

Note that $R \subseteq R^n$ cannot be subintegral since $n \geq 2 $.

Actually, we have a  result similar to Lemma~\ref{2.1} for an extension  which has FIP. 

\begin{proposition}\label{2.2}  If $R \subset S$ is an integral extension that has FIP, then $R\subset S$ is seminormal if and only if $(R:S)$ is a semiprime  ideal of $S$.
\end{proposition}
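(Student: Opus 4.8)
The plan is to prove the two implications separately, with the forward direction essentially free and the converse carrying all the content. For the implication that seminormality forces $(R:S)$ to be semiprime, I would simply invoke the fact recalled in the preliminaries that the conductor of any seminormal extension is a radical ideal of the overring; this needs neither integrality nor FIP, so nothing further is required.

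The substance lies in the converse. Assume $(R:S)$ is semiprime in $S$ and aim to show $R$ is seminormal in $S$, i.e. that $R=\substack{+\\ S}R$. The first step is to reduce to the local case. Both hypotheses in play are local at the maximal ideals of $R$: seminormalization commutes with the formation of localizations, and reducedness of $S/(R:S)$ can be tested after localizing at the maximal ideals of $R$. Moreover, by Corollary~\ref{SUPF} the set $\mathrm{Supp}(S/R)=\mathrm{V}_R((R:S))$ is finite and consists of maximal ideals of $R$. I would therefore fix $M\in\mathrm{Supp}(S/R)$ and replace $R\subseteq S$ by $R_M\subseteq S_M$, so that $(R,\mathfrak m)$ is now local with $\mathrm{Supp}(S/R)=\{\mathfrak m\}$, whence $\sqrt{(R:S)}=\mathfrak m$.

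The second step pins down the conductor exactly and derives a contradiction. Since $R/(R:S)$ embeds in the reduced ring $S/(R:S)$, the conductor is radical in $R$ as well; combined with $\sqrt{(R:S)}=\mathfrak m$ this forces $(R:S)=\mathfrak m$. Now suppose, for contradiction, that $R$ is not seminormal, so that $R\subsetneq\substack{+\\ S}R$. By Theorem~\ref{CANMIN}(1) the subintegral extension $R\subseteq\substack{+\\ S}R$ admits a ramified minimal subextension $R\subset R_1\subseteq S$; its conductor $(R:R_1)$ contains $(R:S)=\mathfrak m$ and is a proper ideal of $R$, hence equals $\mathfrak m$. By Theorem~\ref{MIN}(3) the quotient $R_1/\mathfrak m$ is isomorphic to $(R/\mathfrak m)[X]/(X^2)$, which is not reduced. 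But $\mathfrak m=(R:S)$ is an ideal of $S$ contained in $R_1$, so $R_1/\mathfrak m$ is a subring of $S/\mathfrak m$, which is reduced because $(R:S)$ is semiprime in $S$ — a contradiction. Hence $R=\substack{+\\ S}R$ locally, and therefore globally, giving seminormality.

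I expect the main obstacle to be the bookkeeping of the localization step, namely checking that the three notions involved — seminormality, the precise value of the conductor, and reducedness of the conductor quotient — all descend to and ascend from the localizations $R_M\subseteq S_M$ compatibly. The decisive simplification, which keeps the argument short, is the observation that a \emph{radical} conductor must collapse to the maximal ideal in the local setting; once $(R:S)=\mathfrak m$, a single ramified minimal extension manufactures a nilpotent in $S/(R:S)$, directly contradicting the semiprimeness hypothesis.
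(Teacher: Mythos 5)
Your proof is correct, but its execution genuinely differs from the paper's. You share the overall skeleton: the forward direction is the recalled fact that seminormal extensions have radical conductor, and the converse goes by contradiction through Theorem~\ref{CANMIN}(1), extracting a ramified minimal subextension $R\subset R_1$ of $R\subseteq \substack{+\\ S}R$ (the paper calls it $R\subset U$) and playing its conductor against the semiprimeness of $(R:S)$. The divergence is in how the contradiction is reached. The paper stays global: with $P:=(R:U)$ and $P'$ the maximal ideal of $U$ satisfying $P'^2\subseteq P\subset P'$, it decomposes the radical ideal $(R:S)$, viewed as an ideal of $U$, as $P'\cap Q$ with $Q\not\subseteq P'$, and then uses that the $P'$-primary ideal $P$ contains $P'Q$ to force $P=P'$, contradicting the proper inclusion $P\subset P'$. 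You instead localize at a maximal ideal of $\mathrm{Supp}(S/R)$ --- legitimate, since seminormalization, FIP, and (by finiteness of the extension, which follows from integrality plus FIP) the conductor all commute with localization --- and this pins down $(R:S)=\mathfrak m$ exactly, using that $R/(R:S)$ embeds in the reduced ring $S/(R:S)$. The contradiction is then immediate and more transparent: by Theorem~\ref{MIN}(3), $R_1/\mathfrak m\cong (R/\mathfrak m)[X]/(X^2)$ is a non-reduced subring of the reduced ring $S/(R:S)$. Your route costs the localization bookkeeping (which you correctly flag, and which does go through), but it buys an explicit nilpotent in place of the paper's primary-ideal computation; both arguments are of comparable length and rest on the same key structural input, Theorem~\ref{CANMIN}(1).
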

\begin{proof}
 We need only to show that $(R:S)$ semiprime in $S$ implies the seminormality of $R\subset S$. Consider the seminormalization $T$ of $R$ in $S$ and suppose that $R\neq T$. Then there exists some integral minimal extension $R\subset U$ with $U\subseteq T$. Then $R\subset U$ does be ramified with conductor $P$ a maximal ideal of $R$ such that there is some maximal ideal  $P'$ of $U$ such that $P'^2\subseteq P\subset P'$ because of Theorem~\ref{CANMIN} . Clearly, we have $(R:S)\cap U \subseteq (R:U) \subset P'$. Since $(R:S)$ is supposed to be semiprime in $S$, it follows that $(R:S)\cap U= P'\cap Q$, where $Q$ is either $U$ or an intersection of maximal ideals of $U$, different from $P'$. Therefore,  the $P'$-primary ideal $(R:U)$ contains  $P'Q$, whence is equal to $P'$, a contradiction.
 \end{proof}

 \subsection{First results}
 
 We prove a first positive result in the $t$-closed  case.

 \begin{proposition}\label{2.3} Let $R\subset S$ be a $t$-closed unramified  finite (integral) extension. Then $R\subset S$ has FIP if and only if $R \subset S$ is a Gilmer extension. In that case,   each minimal subextension is inert, with separable minimal residual field extensions.
 \end{proposition}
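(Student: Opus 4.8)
The plan is to prove the two implications separately; the forward one is formal and the converse carries all the weight. For ``FIP $\Rightarrow$ Gilmer'', I note that FIP forces FCP, and since $R\subset S$ is finite, hence integral, Proposition~\ref{SUP}(3) immediately yields that $R\subset S$ is a finite Gilmer extension. Neither the $t$-closed nor the unramified hypothesis is needed here.

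For ``Gilmer $\Rightarrow$ FIP'' I would reduce to a local residue-field computation. First, replacing $R\subseteq S$ by $R/(R:S)\subseteq S/(R:S)$ I may assume $(R:S)=0$ and $R$ Artinian: this preserves FIP, it preserves unramifiedness because $I:=(R:S)$ is a common ideal of $R$ and $S$ (Lemma~\ref{Omega}), and it preserves $t$-closedness, since a witness $b\in S/I$, $r\in R/I$ lifts to $\tilde b,\tilde r$ with $\tilde b^2-\tilde r\tilde b,\ \tilde b^3-\tilde r\tilde b^2\in R+I=R$. As $R$ is now Artinian, $\mathrm{MSupp}(S/R)$ is finite and Proposition~\ref{SUP}(2) lets me assume $(R,\mathfrak m)$ is Artinian local (localization preserves finiteness and unramifiedness, and the $t$-closure commutes with localization). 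By Proposition~\ref{RED}(2) and Property UN one has $\mathfrak m S=Q_1\cap\cdots\cap Q_n$ and $\mathfrak m S_{Q_i}=Q_iS_{Q_i}$, so $S/\mathfrak m S\cong\prod_i\kappa(Q_i)$ with each $\kappa:=R/\mathfrak m\subseteq\kappa(Q_i)$ finite separable. Thus $\kappa\subseteq S/\mathfrak m S$ is an \'etale algebra over a field, and $[\kappa,\,S/\mathfrak m S]$ is finite by Theorem~\ref{ET}.

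The crux, and the step where $t$-closedness is indispensable, is the claim $\mathfrak m S\subseteq R$. I would prove by descending induction that $\mathfrak m^{j}S\subseteq R$ for all $j\ge 1$: the base case follows from nilpotence of $\mathfrak m$, and for smaller $j$ any $b\in\mathfrak m^{j}S$ satisfies $b^2\in\mathfrak m^{2j}S\subseteq R$ and $b^3\in\mathfrak m^{3j}S\subseteq R$ by the inductive hypothesis, so the $t$-closedness condition with $r=0$ forces $b\in R$. Granting this, $\mathfrak m S\subseteq R\subseteq T$ for every $T\in[R,S]$, whence $T\mapsto T/\mathfrak m S$ is an injection of $[R,S]$ into the finite set $[\kappa,\,S/\mathfrak m S]$, so $R\subseteq S$ has FIP. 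I expect this claim to be the main obstacle: it is precisely $t$-closedness, and not mere unramifiedness, that kills the ``nilpotent directions'', i.e. elements of the form $\theta\cdot(\text{nilpotent})$ with $\theta\notin R$, which in diagonal-type situations generate infinitely many intermediate algebras.

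For the final assertion, once FIP is established the hypothesis $\substack{t\\ S}R=R$ means that $R\subseteq S$ is the $t$-closed part of its own canonical decomposition, so Theorem~\ref{CANMIN}(3) shows that each minimal subextension is inert; separability of the residual field extensions then follows from Property UN together with Proposition~\ref{MOmega}.
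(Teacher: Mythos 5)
Your proposal is correct, and the hard direction runs along a genuinely different line from the paper's. The paper makes the same reductions (conductor zero, $(R,M)$ Artinian local), but then quotes \cite[Th\'eor\`eme 3.11]{Pic 1}: a $t$-closed extension is spectrally injective, so $(S,N)$ is local; Property UN gives $MS=N$, and seminormality (a consequence of $t$-closedness) makes the zero conductor a radical ideal of $S$, forcing $N=M=0$; after reduction $R\subset S$ is thus a finite separable field extension, and FIP follows from Theorem~\ref{ET}. You never use spectral injectivity: instead you bound $[R,S]$ by the lattice of the \'etale fibre $\kappa\subseteq S/\mathfrak{m}S$ (Proposition~\ref{RED}(2), Theorem~\ref{ET}) and prove $\mathfrak{m}S\subseteq R$ by descending induction using only the $r=0$ instance of $t$-closedness, i.e.\ seminormality; that induction is sound ($b\in\mathfrak{m}^jS$ gives $b^2\in\mathfrak{m}^{2j}S$ and $b^3\in\mathfrak{m}^{3j}S$ with $2j,3j\geq j+1$), and since $\mathfrak{m}S$ is then an ideal of $S$ contained in $R$ it even yields $\mathfrak{m}S\subseteq(R:S)=0$, recovering the paper's conclusion that $R$ is a field. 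What your route buys: it is self-contained (no appeal to \cite{Pic 1}), and it makes visible that the FIP equivalence already holds with ``seminormal'' in place of ``$t$-closed'' --- anticipating Corollary~\ref{5.6} and Theorem~\ref{5.7} of the paper --- full $t$-closedness being needed only for the inertness claim via Theorem~\ref{CANMIN}(3), exactly as you say. Two small repairs are needed at the end. First, Proposition~\ref{MOmega} cannot deliver separability for a minimal subextension $A\subset B$ until you know $A\subset B$ is unramified, and unramifiedness passes to upper subextensions (Proposition~\ref{DUN}) but not to lower ones; argue directly instead: pick $Q\in\mathrm{Spec}(S)$ lying over the conductor $M$ of $A\subset B$, so that $A/M\subseteq B/M\subseteq S/Q$, and Property UN for $R\subseteq S$ makes $R/(Q\cap R)\subseteq S/Q$ separable, hence so is the intermediate extension $A/M\subseteq B/M$. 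Second, in the injection $T\mapsto T/\mathfrak{m}S$ you should note that $\mathfrak{m}\subseteq\mathfrak{m}S\subseteq R$ forces $\mathfrak{m}S=\mathfrak{m}$ (indeed $\mathfrak{m}S=0$ here), so the target really is $[\kappa,S/\mathfrak{m}S]$.
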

 
 \begin{proof} Assume that $R/(R:S)$ is Artinian. We can suppose that $(R:S)= 0$ and that  $(R,M)$ is local Artinian with $M\in \mathrm{Supp}(S/R)= \mathrm{Spec}(R)$  and then $S$ is Artinian. As $R\subset S$ is $t$-closed,  there is only one prime ideal $N$ of $S$,  lying over $M$ \cite[Th\'eor\`eme 3.11]{Pic 1} and then $(S,N)$ is local.
 Property UN implies that  $MS= N$ and that  the residual extensions are separable, whence have FIP. Since $R\subset S$ is seminormal,  $(R:S)$ is semiprime, so that $(R:S)= N= M$. Since $R/M \subset S/N$ is separable, this extension has FIP and so does $R\subset S$. It follows that any decomposition of $R\subset S$ into minimal extensions is composed of inert extensions, whose residual  field extensions are separable and minimal. The converse holds because the property FIP implies the property FCP and then we can use Proposition~\ref{SUP}(3).
 \end{proof}

We intend to study \'etale Gilmer extensions.  

Let $R\subseteq S$ be a finite extension and $P\in \mathrm{Spec}(R)$. In the next proposition, we set  $n(P):= |\mathrm{Spec}(\kappa (P)\otimes_RS)|$, that is the cardinal of the fiber at  $P$ of $R\subseteq S$ and by $Q_1,\ldots ,Q_{n(P)}$ the elements of this fiber.

\begin{theorem}\label{2.4} Let $R$ be an Artinian  ring and $R \subseteq S$  an \'etale extension.  The following statements hold:
\begin{enumerate}

\item $R\subseteq S$ is a projective finite extension.

\item Let  $Q \in \mathrm{Spec}(S)$ and $P:= Q\cap R$.  Then  $R_ P\to S_Q$ is an    extension and $R_P\subseteq S_P$ is free, with finite rank. Moreover, $PS$ is a radical ideal of $S$, equal to $Q_1\cap \cdots \cap Q_{n(P)}$.

\item  There is an \'etale   extension $R_P^{n(P)} \subseteq S_P$ where $n(P) \leq [S/PS:R/P]= \sum_{i= 1,\dots ,n(P)} [\kappa (Q_i) : \kappa(P)]$. 

\item  $R\subseteq S$ is infra-integral if and only if  $n(P) = [S/PS:R/P]$ for each $P\in \mathrm{Spec}(R)$, and if and only if $S_P$ is isomorphic to $(R_P)^{n(P)}$ for each $P\in \mathrm{Spec}(R)$.

\item  $R\subseteq S$ is subintegral  if and only if $R=S$

\item   $R_P\to S_Q$ is \'etale, finite and has FIP if $R\subseteq S$ has FIP.

\end{enumerate}
\end{theorem}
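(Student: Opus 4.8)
The plan is to exploit the Artinian structure of $R$ (and hence of $S$) to realize the localized extension $R_P\to S_Q$ as the composite of a base change with a projection onto a direct factor, and then to combine the localization behaviour of FIP with its stability under quotients. Since $R$ is Artinian and $R\subseteq S$ is \'etale, parts (1)--(2) already give that $R\subseteq S$ is finite and projective, that $S$ is Artinian, and that $R_P\subseteq S_P$ is free of finite rank; in particular $R_P\to S_Q$ is an extension. Writing $S$ as the product of its localizations and localizing at $R\setminus P$, one gets $S_P\cong\prod[\,S_{Q_j}\mid Q_j\cap R=P\,]$ (as in Remark~\ref{LOCPROD}), so that $S_Q$ is one of the direct factors of $S_P$ and the canonical projection $\pi\colon S_P\to S_Q$ is surjective.

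For the \emph{finite} and \emph{\'etale} assertions, I would argue that $R_P\subseteq S_P$ is \'etale, being the base change of the \'etale extension $R\subseteq S$ along $R\to R_P$, and finite by (2), while the projection $\pi$ is itself \'etale and finite: its kernel $J=\ker\pi$ is generated by an idempotent, so $J=J^2$ gives $\Omega(S_Q\mid S_P)\cong J/J^2=0$, and $S_Q$ is a direct summand of $S_P$, hence flat, of finite presentation and finite over $S_P$. Composing the two \'etale finite maps $R_P\to S_P\to S_Q$ then yields that $R_P\to S_Q$ is \'etale and finite (note these two properties do not even require the FIP hypothesis).

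The substantive part is the FIP conclusion, where the hypothesis that $R\subseteq S$ has FIP enters. Since $R$ is Artinian, $\mathrm{MSupp}(S/R)$ is finite, so by Proposition~\ref{SUP}(2) the extension $R_P\subseteq S_P$ has FIP (and if $P\notin\mathrm{Supp}(S/R)$ then $R_P=S_P$ and there is nothing to prove). I would then pass from the product $S_P$ to the single factor $S_Q$ by applying Proposition~\ref{QUOT} to the ideal $J=\ker\pi$ of $S_P$: setting $I:=J\cap R_P$, Proposition~\ref{QUOT} gives that $R_P/I\subseteq S_P/J\cong S_Q$ has FIP. The crucial observation is that $I=0$: since $R_P\to S_Q$ is injective by (2), any element of $R_P$ lying in $J$ maps to $0$ in $S_Q$ and is therefore $0$. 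Hence $R_P\subseteq S_Q$ has FIP.

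The main obstacle is precisely this last reduction from the product $S_P$ to the factor $S_Q$: FIP is inherited by quotients via Proposition~\ref{QUOT}, but not \emph{a priori} by passing to a direct factor while keeping the same base ring. The verification that the contracted ideal $I=J\cap R_P$ vanishes—resting on the injectivity of $R_P\to S_Q$ from (2)—is exactly what lets Proposition~\ref{QUOT} be applied without altering the base ring $R_P$, and this is the one step deserving care.
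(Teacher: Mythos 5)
Your proposal addresses only item (6) of the theorem. That is the genuine gap: the statement has six parts, and items (3), (4), (5) --- which carry most of the content --- are nowhere argued. You do not construct the factorization $R_P \to (R_P)^{n(P)} \to S_P$ (which the paper gets from the Artinian decomposition $S_P \cong \prod S_{Q_i}$ together with Lemma~\ref{Seppro}, giving that $(R_P)^{n(P)} \subseteq S_P$ is \'etale), you do not prove the inequality $n(P) \leq [S/PS:R/P]$, and you do not touch the infra-integral characterization of (4), whose nontrivial direction needs Nakayama's lemma applied to the nilpotent ideal $PR_P$ to upgrade the residual isomorphism $(R_P)^n/P(R_P)^n \cong S_P/PS_P$ to $(R_P)^n = S_P$, nor the subintegral statement (5), which follows from (4) plus spectral injectivity forcing $n(P)=1$. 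You also take (1) and (2) as ``already given,'' whereas they are part of what is to be proved; they do follow quickly from Propositions~\ref{GUF} and~\ref{RED}, but you should at least have invoked those.

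For item (6) itself, your argument is correct and is essentially the paper's: factor $R_P \to S_Q$ as $R_P \to S_P \to S_Q$, note $R_P \subseteq S_P$ is \'etale and has FIP by Proposition~\ref{SUP}(2), and then pass to the quotient $S_Q = S_P/J$ via Proposition~\ref{QUOT}, the key check being $J \cap R_P = 0$ (which both you and the paper get from injectivity/faithful flatness of $R_P \to S_Q$). The only differences are cosmetic: the paper shows $S_P \to S_Q$ is \'etale via the flat-epimorphism route (Property (SE) plus Noetherianness for finite presentation), while you use the direct-factor description. One small correction there: your claim that $\Omega(S_Q \mid S_P) \cong J/J^2$ with $J = \ker\pi$ is not the right formula --- the conormal presentation $J/J^2$ applies to the kernel of the multiplication map $S_Q \otimes_{S_P} S_Q \to S_Q$, not to the kernel of a surjection of rings. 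The conclusion is nevertheless immediate, since any surjection of finite type has vanishing K\"ahler differentials.
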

\begin{proof} 

(1) $R\subseteq S$ is projective finite by Proposition~\ref{GUF}.

(2) Observe that the maps $R_P \to S_P$ and $R_P\to S_Q$ are extensions because they are faithfully flat.
Actually, $R_P \to S_P$ is free, because flat and finite.   Moreover, $PS$ is a radical ideal by Proposition~\ref{RED}.
 
 (3) Since  $S_P$ is Artinian, it is isomorphic to $\prod [S_{Q_i} | Q_i \in \mathrm{Spec}(S_P)]$, where $Q_1,\ldots, Q_n$ is the  finite set of maximal ideals of $S$ lying over $P$. There is a factorization $R_P \to (R_P)^n \to S_P$. Since $R_P\to (R_P)^n$ is \'etale so is $(R_P)^n \to S_P$ by Lemma~\ref{Seppro}.   Then we get $n \leq [S/PS:R/P]$, because $\kappa (P)^n \subseteq \kappa(P)\otimes_RS$ and $\kappa(P) = R/P$.
 
 (4)    Set $n:= n(P)$ for a prime ideal $P$ of $R$. If $n= [S/PS: R/P$], we get by (3) that $\kappa (P) \cong \kappa (Q_i)$ for each $i$ and $R_P\to S_P$ is infra-integral. The converse is clear. Assume that $R_P\subseteq S_P$ is infra-integral, then $ R_P^n/PR_P^n \to S_P/PS_P$ is an isomorphism, because by (2) we have $S/PS\cong \prod_{i=1}^n S/Q_i \cong (R/P)^n $, so that $S_P/PS_P \cong (R/P)^n_P \cong (R_P/PR_P)^n$.
  
 The Nakayama Lemma implies that $R_P^n = S_P$ since $PR_P$ is a nilpotent ideal \cite[Corollaire 1, p.105]{Bki AC}. For the converse,  use Lemma~\ref{2.1}. 
 
 (5) Suppose that in addition to the hypotheses of (4),  $\mathrm{Spec}(S) \to \mathrm{Spec}(R)$ is injective. It follows from (4), that each $n(P) = 1$ and $R_P = S_P$.
 
(6)  Now $R_P \to S_Q$ is \'etale because $R_P\to S_P$ is \'etale and so is  $S_P \to S_Q$. Indeed,   $S_P\to S_Q$ is a flat epimorphism, which is surjective by the  Property (SE) and is therefore of finite presentation, because $S_P$ is Noetherian. Suppose that in addition $R\subseteq S$ has FIP. We have just shown that $R_P \to S_Q$ is of the form $R_P \to S_P \to S_Q$, where $S_P \to S_Q$ is surjective.  Denote by $J$ the ideal of $ S_P$ such that $S_Q=S_P/J$. We have $J\cap R_P=0$. We are in position to apply Proposition~\ref{QUOT} since $R_P\subseteq S_P$ has FIP and  then $R_P \to S_Q$ has FIP.
 \end{proof}
 
 Hence if $R$ is an Artinian ring and $R\subseteq S$ is \'etale and has FIP, then $R_P\subseteq S_Q$ has FIP. This means that $R\subseteq S$ has locally FIP. In view of  Lemma~\ref{2.1} the same result is valid for the \'etale extension $R\subseteq R^n$, even if $R$ is Artinian and $R\subseteq R^n$ has not FIP (see Theorem~\ref{PROD}). Thus an extension that has  locally FIP  does not need to have FIP.

\begin{corollary}\label{2.5} Let $R\subseteq S$ be an \'etale Gilmer  extension.  The following statements hold:

\begin{enumerate} 

\item  $R\subseteq S$ is finite and projective and $\mathrm{V}_R((R:S)) =\mathrm{Supp}(S/R)$ is a finite subset of $ \mathrm{Max}(R)$;

\item  $R_P\subseteq S_Q$ is \' etale  and finite for  $Q\in \mathrm{Spec}(S)$ and  $P:= Q\cap R$.  Moreover, $R_P\subseteq S_Q$  has FIP if $R\subseteq S$ has FIP;

\item  $R\subseteq S$ is infra-integral if and only if  $S_M$ is $R_M$-isomorphic to some $(R_M)^n$ for each $M\in \mathrm{Supp}(S/R)$ (where necessarily $n = n(M))$;

\item $R= S$ if and only if $R\subseteq S$ is subintegral.
\end{enumerate}
\end{corollary}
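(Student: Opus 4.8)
The plan is to reduce all four assertions to the Artinian-base case already settled in Theorem~\ref{2.4}, by passing to the localization at the conductor. First I would dispose of (1): Proposition~\ref{GUF} gives at once that an \'etale Gilmer extension is finite and projective. Since $S/R$ is a finitely generated $R$-module whose annihilator is exactly $(R:S)$, we have $\mathrm{Supp}(S/R)=\mathrm{V}_R((R:S))=\mathrm{Spec}(R/(R:S))$; the Gilmer hypothesis makes $R/(R:S)$ Artinian, so this set is finite and consists of maximal ideals. In particular $\mathrm{MSupp}(S/R)=\mathrm{Supp}(S/R)$ is finite, so the conductor localization $R_{(R:S)}\subseteq S_{(R:S)}$ of Definition~\ref{SEM} is defined.

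The decisive step is to observe that $R_{(R:S)}\subseteq S_{(R:S)}$ is an \'etale extension whose base is Artinian, so Theorem~\ref{2.4} applies to it verbatim. \'Etaleness is clear because $S_{(R:S)}=S\otimes_R R_{(R:S)}$ is the base change of the \'etale extension $R\subseteq S$ along the flat map $R\to R_{(R:S)}$, and the \'etale property is universal. That the base is Artinian is exactly Lemma~\ref{CONZERO}: by (1) the extension is finite and flat (locally free), so localizing at the conductor annihilates the conductor and yields $R_{(R:S)}\cong R/(R:S)$, which is Artinian.

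With this reduction in hand, (2)--(4) follow by transporting properties between $R\subseteq S$ and $R_{(R:S)}\subseteq S_{(R:S)}$ through Proposition~\ref{PROPN} and iterated localization. For (2), given $Q\in\mathrm{Spec}(S)$ with $P:=Q\cap R$: if $P\notin\mathrm{Supp}(S/R)$ then $R_P=S_Q$ is an isomorphism (the homeomorphism of $\mathrm{D}_S((R:S))$ recorded in the proof of Proposition~\ref{PROPN}), so the map is trivially \'etale, finite and FIP; if $P\in\mathrm{Supp}(S/R)$ then $P$ corresponds to a maximal ideal of the Artinian ring $R_{(R:S)}$, and $R_P\to S_Q$ is a further localization of $R_{(R:S)}\to S_{(R:S)}$ at the prime over $Q$, so Theorem~\ref{2.4}(2),(6) give that it is \'etale and finite, and has FIP once $R_{(R:S)}\subseteq S_{(R:S)}$ does---which by Proposition~\ref{PROPN} is equivalent to $R\subseteq S$ having FIP. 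For (3), infra-integrality transfers between $R\subseteq S$ and $R_{(R:S)}\subseteq S_{(R:S)}$ by Proposition~\ref{PROPN}; since the elements of $\mathrm{Supp}(S/R)$ correspond exactly to the (maximal) primes of the Artinian ring $R_{(R:S)}$, the stated condition $S_M\cong (R_M)^{n}$ is precisely the criterion of Theorem~\ref{2.4}(4) applied to $R_{(R:S)}\subseteq S_{(R:S)}$, with $n=n(M)$ the fiber cardinality. Finally, (4) is cleanest via Remark~\ref{NOUN}(2): by (1) the extension is finite and, being \'etale, it is unramified, so a subintegral such extension forces $R=S$, the converse being trivial; alternatively, subintegrality transfers by Proposition~\ref{PROPN} and one applies Theorem~\ref{2.4}(5) together with the equivalence $R=S\Leftrightarrow R_{(R:S)}=S_{(R:S)}$.

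The only real obstacle is the bookkeeping of the two localizations---checking that $R_P$ (resp.\ $S_Q$) is recovered as a localization of the Artinian ring $R_{(R:S)}$ (resp.\ of $S_{(R:S)}$) at the appropriate prime, and that the fiber cardinalities and ranks match up---but no new idea is needed beyond the base-change identity $S_{(R:S)}=S\otimes_R R_{(R:S)}$ and the conductor computation of Lemma~\ref{CONZERO}.
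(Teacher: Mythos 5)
Your proposal is correct and follows essentially the same route as the paper: Proposition~\ref{GUF} settles (1), and (2)--(4) are obtained by reducing to the Artinian-base case of Theorem~\ref{2.4} through the conductor localization of Proposition~\ref{PROPN}. The paper's proof is just far terser; your explicit justifications (base-change stability of \'etaleness and Lemma~\ref{CONZERO} giving $R_{(R:S)}\cong R/(R:S)$ Artinian) are exactly the details it leaves implicit.
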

\begin{proof}
 Thanks to Proposition~\ref{GUF}, we get that $R \subseteq S$ is finite  so that $\mathrm{Supp}(S/R) = \mathrm{V}((R:S))$. It is projective by Proposition~\ref{GUF}.  For the statements about infra-integrality and subintegrality, use Proposition~\ref{PROPN} and Theorem~\ref{2.4}.
\end{proof}
We thus find that an \'etale finite extension is never  subintegral, an observation that already appeared in Remark~\ref{NOUN}. 

\begin{theorem}\label{2.51} Let $R\subseteq S$ be an infra-integral Gilmer  extension. Consider the following statements:
\begin{enumerate}
\item  $R\subseteq S$ is  \'etale. 

\item  $R\subseteq S$ is of finite presentation and is locally a trivial \'etale cover; that is, for each $P\in \mathrm{Spec}(R)$,  $S_P\cong (R_P)^{n(P)}$ for some integer $n(P)$.

\item $R\subseteq S$ is locally a trivial \'etale cover and $R$ is an $\mathcal S$-ring (for instance is either semi-local or whose total quotient ring is semi-local).
\end{enumerate}

Then (1) $\Leftrightarrow$ (2) and (3) $\Rightarrow$ (1). 
\end{theorem}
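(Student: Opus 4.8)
The plan is to prove the three implications $(1)\Rightarrow(2)$, $(2)\Rightarrow(1)$ and $(3)\Rightarrow(1)$ separately, built around one common engine. As soon as $S_P\cong (R_P)^{n(P)}$ as $R_P$-algebras, Proposition~\ref{1.5} shows that $R_P\subseteq S_P$ is \'etale, hence flat and with $\Omega(S_P|R_P)=0$. Since flatness is local on the base and Kähler differentials commute with localization (so $\Omega(S|R)_P\cong\Omega(S_P|R_P)$), any form of local triviality will at once give that $R\subseteq S$ is flat and that $\Omega(S|R)=0$. The only ingredient of \'etaleness not delivered by this engine is \emph{finite presentation}, and the three implications differ exactly in how it is secured.

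For $(1)\Rightarrow(2)$: being \'etale and Gilmer, $R\subseteq S$ is finite and projective by Proposition~\ref{GUF}, whence of finite presentation by Lemma~\ref{Seppro}(1). For the local triviality I would invoke Corollary~\ref{2.5}(3): since $R\subseteq S$ is an infra-integral \'etale Gilmer extension, $S_M\cong (R_M)^{n}$ for each $M\in\mathrm{Supp}(S/R)$. For a prime $P\notin\mathrm{Supp}(S/R)$ one has $R_P=S_P$, i.e. $n(P)=1$; and by Corollary~\ref{2.5}(1) every element of $\mathrm{Supp}(S/R)$ is maximal, so these two cases exhaust $\mathrm{Spec}(R)$ and yield $S_P\cong(R_P)^{n(P)}$ for every $P$.

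For $(2)\Rightarrow(1)$: by the common engine, local triviality makes $R\subseteq S$ flat with $\Omega(S|R)=0$; since $S$ is of finite presentation, hence of finite type, by hypothesis, $R\subseteq S$ is unramified, and therefore \'etale (flat, finitely presented, unramified).

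The implication $(3)\Rightarrow(1)$ is the delicate one, and its main obstacle is to recover finite presentation from purely local data without postulating it. Here is the route I would take. First, the Gilmer hypothesis forces $\mathrm{Supp}(S/R)\subseteq\mathrm{V}((R:S))=\mathrm{Spec}(R/(R:S))$, which is finite and consists of maximal ideals because the nucleus is Artinian; hence $\mathrm{MSupp}(S/R)$ is finite. Local triviality gives that $R_M\subseteq S_M\cong(R_M)^{n(M)}$ is finite for each $M\in\mathrm{MSupp}(S/R)$, so Lemma~\ref{FIN} yields that $R\subseteq S$ is finite. Flatness again follows from the common engine, so $S$ is a finite flat $R$-module; the $\mathcal S$-ring hypothesis then upgrades it to a projective module, and Lemma~\ref{Seppro}(1) makes $R\subseteq S$ of finite presentation. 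With finite presentation in hand, the argument of $(2)\Rightarrow(1)$ applies verbatim and $R\subseteq S$ is \'etale. The crux is thus the passage ``finite flat $+\ \mathcal S$-ring $\Rightarrow$ projective $\Rightarrow$ finitely presented'', which is precisely why the bare local triviality of $(3)$ must be supplemented by the $\mathcal S$-ring condition, whereas $(2)$ can afford to assume finite presentation outright.
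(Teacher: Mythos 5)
Your proof is correct, and its overall architecture is the same as the paper's: Corollary~\ref{2.5} supplies the local triviality in $(1)\Rightarrow(2)$; the \'etaleness of diagonal extensions (Proposition~\ref{1.5}) together with the locality of flatness and of K\"ahler differentials gives $(2)\Rightarrow(1)$; and $(3)\Rightarrow(1)$ runs through finite $\Rightarrow$ projective (the $\mathcal S$-ring hypothesis) $\Rightarrow$ finitely presented (Lemma~\ref{Seppro}(1)) so as to reduce to $(2)$. Where you genuinely differ is the step of $(3)\Rightarrow(1)$ that produces finiteness. The paper writes that local triviality makes $R\subseteq S$ ``flat and unramified, whence finite'', implicitly appealing to Proposition~\ref{GUF}; but unramifiedness in the paper's (Raynaud) sense presupposes finite type, and finite type is not a purely local property, so that wording is slightly circular as it stands. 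Your route --- the Gilmer hypothesis gives $\mathrm{Supp}(S/R)\subseteq\mathrm{V}_R((R:S))$, a finite set of maximal ideals, local triviality gives finiteness of each $R_M\subseteq S_M$, and Lemma~\ref{FIN} then globalizes this to finiteness of $R\subseteq S$ --- supplies exactly the missing global finite-type input before the word ``unramified'' may legitimately be used, and is therefore the more careful argument; from that point on the two proofs coincide.
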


\begin {proof} (1) $\Leftrightarrow$ (2). Assume that $R\subseteq S$ is  \'etale. Then, locally $S\cong R^n$ for some integer $n$ in view of Corollary~\ref{2.5}.  Conversely, since $R\subseteq R^n$ is  \'etale for any integer $n$ by Proposition~\ref{1.5},  $R\subseteq S$ is  flat,  unramified and of finite presentation.

(3) $\Rightarrow$ (1) because $R\subseteq S$ is then flat and unramified, whence finite. To conclude, $R\subseteq S$ is projective by definition of an $\mathcal S$-ring (see Remark~\ref{LOCAL}) and is then of finite presentation by Lemma~\ref{Seppro}, so that (2) is verified.
\end {proof}

\begin{theorem}\label{2.6} Let $R\subseteq S$ be an \'etale infra-integral Gilmer extension. Then $R\subseteq S$ has FIP if and only if the nucleus of $R\subseteq S$ is a FMIR and the cardinal of each fiber  at a prime ideal $M\in \mathrm{Supp}(S/R)$ is $2$ as soon as $R_M$ is an infinite SPIR.
\end{theorem}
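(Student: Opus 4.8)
The plan is to reduce to a local situation and then invoke the already-proved diagonal case. By Corollary~\ref{2.5}(1), the extension $R\subseteq S$ is finite and projective, $\mathrm{Supp}(S/R)=\mathrm{V}_R((R:S))$ is a finite subset of $\mathrm{Max}(R)$, and by Proposition~\ref{SUP}(2) the FIP property is equivalent to the conjunction of the local FIP properties $R_M\subseteq S_M$ for $M\in\mathrm{MSupp}(S/R)=\mathrm{Supp}(S/R)$. Since $R\subseteq S$ is infra-integral and \'etale, Corollary~\ref{2.5}(3) gives that each $S_M$ is $R_M$-isomorphic to $(R_M)^{n(M)}$, where $n(M)=|\mathrm{Spec}(\kappa(M)\otimes_R S)|$ is the cardinal of the fiber. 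Thus I would first record that, locally at each $M\in\mathrm{Supp}(S/R)$, the extension $R_M\subseteq S_M$ is exactly a diagonal extension $R_M\subseteq (R_M)^{n(M)}$.

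Next I would determine the nature of the local rings $R_M$. The nucleus is $R/(R:S)$, and because $R\subseteq S$ is finite and a Gilmer extension, $R/(R:S)$ is Artinian; by Lemma~\ref{CONZERO} (the extension being finite and flat, hence locally free) we have $R/(R:S)\cong R_{(R:S)}$, so that the nucleus is the semi-local ring $R_{(R:S)}$ whose localizations are precisely the $R_M$ for $M\in\mathrm{Supp}(S/R)$. Therefore the nucleus is a FMIR if and only if each $R_M$ (for $M$ in the support) is a finite local ring, an infinite SPIR, or an infinite field, by Theorem~\ref{FMIR}. The plan is then to apply Theorem~\ref{PROD} locally: for each $M$, the diagonal extension $R_M\subseteq (R_M)^{n(M)}$ has FIP if and only if $R_M$ is a FMIR with the extra constraint $n(M)=2$ whenever $R_M$ is an infinite SPIR.

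Assembling these equivalences gives the result. For the forward direction, if $R\subseteq S$ has FIP, then each $R_M\subseteq (R_M)^{n(M)}$ has FIP, so by Theorem~\ref{PROD} each $R_M$ is a FMIR (forcing the nucleus to be a FMIR) and $n(M)=2$ whenever $R_M$ is an infinite SPIR. Conversely, if the nucleus is a FMIR and $n(M)=2$ at every infinite-SPIR $R_M$ in the support, then each $R_M$ is a FMIR satisfying the rank-$2$ condition, so each local diagonal extension has FIP by Theorem~\ref{PROD}, whence $R\subseteq S$ has FIP by Proposition~\ref{SUP}(2). I would also note that for $M\notin\mathrm{Supp}(S/R)$ one has $R_M=S_M$, so those primes contribute nothing and may be ignored.

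The main obstacle I anticipate is purely bookkeeping rather than conceptual: one must verify cleanly that the fiber cardinal $n(M)$ appearing in the local diagonal decomposition $S_M\cong (R_M)^{n(M)}$ coincides with the integer $n$ in the statement of Theorem~\ref{PROD} applied to $R_M$, and that the case $n(M)=1$ (where $R_M=S_M$ and FIP is automatic) does not spuriously trigger the SPIR constraint. One should confirm that Theorem~\ref{PROD} is only invoked for $n(M)\ge 2$, i.e.\ precisely at the primes of the genuine support, so that the degenerate diagonal $R_M\subseteq R_M$ is handled separately and correctly. Once this matching of fibers with the diagonal index is pinned down, the equivalence follows directly from the local-global principle of Proposition~\ref{SUP}(2) together with Theorem~\ref{PROD}.
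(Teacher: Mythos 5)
Your proposal is correct and follows essentially the same route as the paper: the paper's proof likewise uses the vanishing of the localized conductor (Lemma~\ref{CONZERO}) to identify the nucleus with the semi-local ring $R_{(R:S)}$ whose localizations are the $R_M$, and then concludes by combining Corollary~\ref{2.5} (local diagonal structure $S_M\cong (R_M)^{n(M)}$) with Theorem~\ref{PROD} applied locally. Your write-up simply makes explicit the local-global step (Proposition~\ref{SUP}(2)) and the $n(M)\geq 2$ bookkeeping that the paper leaves implicit.
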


\begin {proof} We saw that under the hypotheses $(R:S)_{(R:S)} =0$, so that $(R:S)_M=0$   for $M\in \mathrm{Supp}(S/R)$. Set $R':= R_{(R:S)}$ and $M':= M_{(R:S)}$, for $M\in \mathrm{Supp}(S/R)$. We get that $R_M \cong (R')_{M'}$. To conclude, it is enough to apply  Theorem~\ref{PROD} and Corollary~\ref{2.5}.
\end{proof}

It is worth noticing that Theorem~\ref{2.6} gives an answer for the part $R\subseteq \substack{t\\ S}R$
of the canonical decomposition of $R\subseteq S$.
The next result  has a simple proof because it concerns  an extension whose domain is Artinian reduced, whence absolutely flat.

\begin{theorem}\label{2.7} Let $R \subset S$ be an unramified Gilmer extension, whose nucleus is reduced. Then $R \subset S$ is finite and has FIP.
\end{theorem}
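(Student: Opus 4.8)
The plan is to reduce the problem to the case of an \'etale algebra over a field, where Theorem~\ref{ET} applies. Finiteness is immediate: an unramified Gilmer extension is finite (and projective) by Proposition~\ref{GUF}, so only the FIP statement requires work. Set $I:=(R:S)$. As recalled in the introduction, $R\subset S$ has FIP if and only if $R/I\subset S/I$ does, so I may assume $I=0$. Then the nucleus $R/I=R$ is Artinian (Gilmer hypothesis) and reduced (standing assumption), hence absolutely flat; being Artinian it is a finite product of fields $R\cong K_1\times\cdots\times K_m$.

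Before localizing I must verify that the unramified property survives the passage to the nucleus. Since $I$ is a common ideal of $R$ and $S$ and $S$ is a finite, hence finite type, $R$-module, Lemma~\ref{Omega} gives $\Omega((S/I)|(R/I))\cong\Omega(S|R)=0$, so $R/I\subset S/I$ is again unramified of finite type. This is really the only bookkeeping point in the argument.

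Now $R$ is Artinian, so $\mathrm{MSupp}(S/R)=\mathrm{Max}(R)$ is finite and Proposition~\ref{SUP}(2) reduces FIP for $R\subset S$ to FIP for each $R_M\subset S_M$, $M\in\mathrm{Max}(R)$. By Remark~\ref{LOCPROD} each $R_M$ is one of the fields $K_i$, while $R_M\subset S_M$ is finite (localization of a finite extension) and unramified (stability of unramifiedness under the base change $R\to R_M$). Over a field every module is flat and every finite module is finitely presented, so $R_M\subset S_M$ is flat and of finite presentation, hence \'etale; that is, $S_M$ is an \'etale algebra over the field $R_M$. Theorem~\ref{ET} then yields that $R_M\subset S_M$ has FIP, and a second application of Proposition~\ref{SUP}(2) gives FIP for $R\subset S$. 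I anticipate no real obstacle: in line with the paper's remark that the proof is simple, the whole point is that a reduced Artinian nucleus is a product of fields, over each of which a finite unramified extension is automatically \'etale, so the field case of Theorem~\ref{ET} closes the argument.
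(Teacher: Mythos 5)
Your proof is correct and follows essentially the same route as the paper's: pass to the Artinian reduced nucleus, localize at the (finitely many) maximal ideals via Proposition~\ref{SUP}(2), observe that each $R_M\subset S_M$ is \'etale over the field $R_M$, and conclude with Theorem~\ref{ET}. The only difference is that you spell out two steps the paper leaves implicit (that unramifiedness descends to the quotient by the conductor, via Lemma~\ref{Omega}, and that finite unramified over a field is automatically \'etale), which is harmless.
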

\begin{proof} There is no harm to  assume  that $R$ is an Artinian ring with maximal ideals $M_1,\ldots ,M_n$, where $n>0$ is an integer. Proposition~\ref{GUF} shows that $R\subset S$ is finite.  Moreover,  $\mathrm{Supp}(S/R)$ is finite since so is the spectrum of $R$. For each $i$, $R_{M_i} \subset S_{M_i}$ is \'etale over the field $R_{M_i}$. In view of Theorem~\ref{ET}, this extension has FIP. Since the support of $S/R$ is finite, we get that $R\subset S$ has FIP by Proposition~\ref{SUP}(2).  
\end{proof}

\begin{remark}\label{2.8}(1) We now exhibit an \'etale  finite extension $R\subseteq R^n:= S$ with conductor $0$ by Lemma~\ref{2.1}, for any integer $n> 2$, which has not FIP, showing that supposing only $R$ with finitely many ideals is not sufficient, and  such that $R$ is local Artinian and is not reduced. Let $K$ be an infinite field and set $R:= K[X]/(X^2)$ where $X$ is an indeterminate over $K$. 

 Then $R$ is a local Artinian ring, in fact, an infinite SPIR, with maximal ideal $M$, such that $M^2=0$ and $R/M \cong K$. 
 Then, $R\subset R^n$ is \'etale by Proposition~\ref{1.5}, but has not FIP by Theorem~\ref{PROD}.

(2) Consider the faithfully flat \'etale finite extension
$R\subseteq R^2=: S$ where we choose a   ring $R$ with finitely many ideals, for instance a finite ring. We can write $S = R + Rt$ for some $t\in S$. In view of \cite[Proposition 4.12]{Gil},  there is a bijection from $[R,S]$  to the set of ideals of $R$.  It follows that $R\subseteq S$ has FIP whereas $R$ is not  necessarily reduced. There are no rank conditions because they are hidden by the choice of  $R\subseteq R^n$, with $n=2$.

(3) Here is an example of an unramified infra-integral FIP extension which is not seminormal. Let $(R,P)$ be a SPIR such that $P=Rz$, with $z^2=0$ (take $R:=k[Z]/(Z^2)=k[z]$, where $k$ is a field).  Set $T:=R[X]/(zX,X^2)=R[x]$ and $S:=T[Y]/(Y^2-Y,zY-x)=T[y]$. It is easy to see that $R\subset T$ is minimal ramified, and $T\subset S$ is minimal decomposed (\cite[Theorem 2.3]{DPP2}). Then, $R\subset S$ has FIP, is infra-integral and not seminormal. Moreover, $M:=P+Rx$ is the only prime ideal of $T$, and $Q:=M+Ty,\ Q'=M+T(1-y)$ are the two prime ideals of $S$. We get that $PS=M=QQ'$, giving $PS_Q=QQ'S_Q=QS_Q$ and $PS_{Q'}=QQ'S_{Q'}=Q'S_{Q'}$, so that $R\subset S$ is unramified, since the residual extensions are isomorphisms.

(4) Another example is as follows and shows that under the hypothesis of Theorem~\ref{2.4} the rank of $S$ over $R$ may not be defined. Let $R$ be an Artinian reduced ring and  $f_1,\ldots, f_n \in R$ such that $(f_1,\ldots,f_n) = R$; we set $S:= \prod_{i=1}^n R_{f_i}$. It is known that $R\to S$ is a faithfully flat \'etale morphism. Localizing the extension at each $P\in\mathrm{Spec}(R)$, we get morphisms of the form $R_ P\to S_P\cong (R_P)^p$ where $p$ is the number of open subsets $\mathrm{D}(f_i)$ containing $P$ while  $R_P$ is a field. In view of Proposition~\ref{SUP}(2) and Theorem~\ref{ET}, $R \to S$ has FIP.

\end{remark}

\subsection{The case of a base ring with finitely many ideals}

 We intend to generalize Theorem~\ref{PROD} to \'etale extensions $R\subseteq S$ that may not be infra-integral as in a trivial \'etale cover.
 
\begin{theorem}\label{2.9} Let $R \subseteq S$ be an \'etale extension, where $R$ is a FMIR. 
\begin{enumerate}

\item The ring $S$ is a FMIR  and if $Q\in \mathrm{Spec}(S)$ and $P:= Q\cap R$, then $S_Q$ is a field (respectively, a finite ring, an infinite SPIR) if and only if $R_P$ is a field (respectively, a finite ring, an infinite SPIR).
\item If in addition, $\mathrm{rk}_P(S_P)\leq 2$ for each prime ideal $P\in \mathrm{Supp}_R(S/R)$ such that $R_P$ is an infinite SPIR, then $R\subseteq S$ has FIP.

\end{enumerate}
\end{theorem}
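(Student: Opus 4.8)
The plan is to establish (1) by passing to the local homomorphisms $R_P\to S_Q$ and reading the structure of $S_Q$ off Property UN together with faithful flatness, and then to deduce (2) by localizing at the finitely many points of $\mathrm{MSupp}(S/R)$ and treating the three types of local component of the FMIR $R$ separately.

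For (1): since $R$ is a FMIR it is Artinian, so $R\subseteq S$ is a Gilmer extension and Corollary~\ref{2.5} shows it is finite and projective, whence $S$ is Artinian and $S\cong\prod_{Q\in\mathrm{Max}(S)}S_Q$. Fix $Q\in\mathrm{Spec}(S)$ and put $P:=Q\cap R$; by Corollary~\ref{2.5}(2) the map $R_P\to S_Q$ is \'etale and finite, hence faithfully flat, so $S_Q$ is free over $R_P$ and $R_P\hookrightarrow S_Q$. The crucial point is that Property UN gives $QS_Q=PS_Q$, so $(QS_Q)^j=(P^jR_P)S_Q$ for every $j$, and by faithful flatness this is zero exactly when $P^jR_P=0$; thus the maximal ideals $QS_Q$ and $PR_P$ have the same nilpotency index. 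From this I would read off the three implications: if $R_P$ is a field then $QS_Q=0$ and $S_Q=\kappa(Q)$ is a finite separable field extension of $R_P$; if $R_P$ is an infinite SPIR with $PR_P=R_Pt$ then $QS_Q=tS_Q$ is principal and nilpotent of the same index $\geq 2$, so $S_Q$ is a SPIR whose residue field $\kappa(Q)$, being finite over the infinite field $\kappa(P)$, is infinite; and if $R_P$ is finite then $S_Q$, a finite module over a finite ring, is finite. The converse implications follow from the injection $R_P\hookrightarrow S_Q$ and the FMIR trichotomy applied to $R_P$. Finally $S$ is a finite product of fields, finite local rings and infinite SPIRs, hence a FMIR by Theorem~\ref{FMIR}.

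For (2): Corollary~\ref{2.5}(1) gives that $\mathrm{Supp}(S/R)=\mathrm{V}((R:S))$ is finite, so Proposition~\ref{SUP}(2) reduces FIP to the localizations $R_M\subseteq S_M$, $M\in\mathrm{MSupp}(S/R)$; I may thus assume $(R,M)$ is one local component of the FMIR. If $R$ is a field, then $R\subseteq S$ is \'etale over a field and has FIP by Theorem~\ref{ET}. If $R$ is finite, then $S$ is finite by (1) and $[R,S]$ is automatically finite. The remaining, and main, case is $R$ an infinite SPIR, where the hypothesis forces $\mathrm{rk}_R(S)\leq 2$ and $S$ is free of that rank. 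A rank-one free extension is trivial (the argument of Lemma~\ref{CONZERO}), so $S=R$. If the rank is two and $S$ is not local, then $S\cong S_{Q_1}\times S_{Q_2}$ with both factors free of rank one, hence equal to $R$, so $S\cong R^2$ and Theorem~\ref{PROD} applies with the admissible value $n=2$. If the rank is two and $S$ is local, then $S/MS=\kappa(Q)$ is separable of degree two over $k:=R/M$, hence simple; lifting a primitive element to $\alpha\in S$ and using Nakayama, $\{1,\alpha\}$ is an $R$-basis, so $S=R[\alpha]=R\oplus R\alpha$.

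The hard part is this last inert sub-case, where $R\subseteq S$ is not even seminormal (its conductor is $0$ while $S$ is not reduced), so the canonical decomposition cannot be exploited through the \'etale results; instead I would compute $[R,S]$ by hand. Writing $\alpha^2=-a\alpha-b$ with $a,b\in R$ and letting $\pi\colon S\to R$ be the $\alpha$-coordinate, any $R$-submodule $T$ with $R\subseteq T\subseteq S$ satisfies $T=R\oplus I\alpha$ for the ideal $I:=\pi(T)$, and conversely each $R\oplus I\alpha$ with $I$ an ideal of $R$ is a subring, because the $\alpha$-coordinate of $(r+s\alpha)(r'+s'\alpha)$ equals $rs'+r's-ass'\in I$. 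Hence $T\mapsto\pi(T)$ is a bijection from $[R,S]$ onto the set of ideals of $R$, which is finite since $R$ is a SPIR; therefore $R\subseteq S$ has FIP, completing all cases.
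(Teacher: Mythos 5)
Your proof is correct, but it takes a genuinely different route from the paper's, which runs both parts through strict Henselization. For part (2) the paper localizes at a supported prime, and in the only nontrivial case $A:=R_P$ an infinite SPIR it passes to a strict Henselization $\widehat A$ (again an infinite SPIR), invokes \cite[Proposition 18.8.1]{EGAIV} (\'etale covers over strictly Henselian rings are trivial) to get $S_P\otimes_A\widehat A\cong \widehat A^{\,2}$, applies Theorem~\ref{PROD} there, and then descends FIP along the faithfully flat map $A\to\widehat A$ using \cite[Theorem 2.2]{DPP3}; for part (1) it reuses the same Henselization without rank hypotheses and descends the property of having finitely many ideals along $S_P\to\widehat A^{\,n}$ via $IC\cap B=I$. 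You avoid Henselization entirely: your part (1) extracts the structure of each $S_Q$ directly from Property UN plus faithful flatness (the equality of the nilpotency indices of $PR_P$ and $QS_Q$, principality of $QS_Q=tS_Q$, and the residue-field comparison), and your part (2) splits the SPIR case by whether $S_P$ is local: in the non-local case you recover $S_P\cong R_P^2$ and quote Theorem~\ref{PROD}, and in the local rank-two case you compute $[R,S]$ by hand, showing $T\mapsto\pi(T)$ is a bijection onto the ideals of $R$ --- this is precisely Gilbert's result \cite[Proposition 4.12]{Gil}, which the paper itself cites in Remarks~\ref{2.8}(2) and~\ref{2.11}, so you have in effect reproved it. Note also that your logical order is inverted relative to the paper's (you prove (1) first and use it for the finite-ring case of (2); the paper proves (2) first and recycles its argument for (1)); neither order is circular. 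What each approach buys: the paper's descent argument is uniform --- no case distinction between the decomposed ($S_P\cong R_P^2$) and inert (local) configurations, and the same device settles both parts --- while yours is more elementary and self-contained, and the explicit bijection in the inert case even gives the exact cardinality of $[R_P,S_P]$ (the number of ideals of the SPIR), which the paper's proof does not exhibit.
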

\begin{proof} We begin with the proof of (2).  We can assume that the conductor of $R\subseteq S$ is zero and we will prove that $R_P \subseteq S_P$ has FIP for each $P\in \mathrm{Supp}(S/R)$. If $R_P$ is either  a field or a finite ring, we know that $R_P \subseteq S_P$ has FIP, because $R\subseteq S$ is finite. The only case to consider is when $A:= R_P$ is an infinite SPIR with maximal ideal $M$ and then  we have $\mathrm{rk}_P(S_P) =2$. In this case $A\subseteq S_P$ is an \'etale cover, because this extension is finite.  Denote by $\widehat A$ a strict Henselization of $A$. The ring $\widehat A$ is local with maximal ideal $\widehat M= M\widehat A$ and $A \to \widehat A$ is a faithfully flat ring morphism \cite[pp.94-95]{R}. It follows that $\widehat A$ is zero-dimensional and Noetherian, whence Artinian \cite[pp.94-95]{R}. Moreover, if $M= At$ and $M$ is nilpotent of index $p> 0$, we have $\widehat M= \widehat At$ and $\widehat M$ is nilpotent of index $p> 0$.  It follows that $\widehat A$ is an infinite SPIR. We deduce from \cite[Proposition 18.8.1]{EGAIV} stating that an \'etale cover over a strict Henselian ring is trivial, that $S_P\otimes_A \widehat A\cong (\widehat A)^n$, for some integer $n$, necessarily equal to $2$. Therefore, $\widehat A \subseteq S_P\otimes \widehat A$ has FIP by Theorem~\ref{PROD}. Since $A \to \widehat A$ is faithfully flat, it descends morphisms that have FIP \cite[Theorem 2.2]{DPP3}. We have thus proved that $R\subseteq S$ has FIP.

(1) Now let $Q$ be a prime ideal of $S$ lying over $P$ in $R$. Since $S$ is Artinian, we can identify this ring with a finite product of its localizations at prime ideals. Therefore, to prove that $S$ has finitely many ideals, it is enough to prove that  the rings $S_Q$ have the same property. Now since $S_Q$ is a localization of $S_P$, we need only to prove that $S_P$ has only finitely many ideals.  This is clear if $R_P$ is either a field or a finite ring. In case $A:= R_P$ is an infinite SPIR, the above argumentation without rank hypotheses  shows that $S_P \to( \widehat A)^n$ is faithfully flat and clearly $(\widehat A)^n$ has finitely many ideals. To complete the proof it is enough to observe that for a faithfully flat ring morphism $B\to C$ and $I$ an ideal of $B$, we have $IC\cap B= I$.

Now by Theorem~\ref{2.4}(6), $R_P\subseteq S_Q$ is faithfully flat, finite and \'etale. If $R_P$ is a field, then $S_Q$ is a field, since isomorphic to a product of finitely many fields, because $R_¬\subseteq S_Q$ is \'etale. If $R_P$ is finite, so is $S_Q$. If $R_P$ is an infinite SPIR, then $S_Q$ cannot be neither a field nor finite. Deny,  then $R_P$ would be either a domain or finite. If $S_Q$ is either finite or a field, so is $R_P$. Suppose that $S_Q$ is an infinite SPIR.  If $R_P$ is finite, then $\kappa (P)$ is finite and $\kappa (P) \subseteq \kappa (Q)$ is finite so that $\kappa (Q)$ is finite. This in turn implies that $S_Q$ is finite, an absurdity. Now if $R_P$ is an infinite field, then $S_Q$ is a product of fields, because $R_P\subseteq S_Q$ is \'etale, an absurdity. It follows that $R_P$ is an infinite SPIR. 
\end{proof}

\begin{corollary}\label{2.10} Let $R\subseteq S$ be an \'etale Gilmer extension, whose nucleus is a FMIR. If  $\mathrm{rk}_{R_P}(S_P) \leq 2$ for each $P\in \mathrm{Supp}_R(S/R)$ such that $R_P$ is an infinite SPIR, then $R \subseteq S$ has FIP. 
\end{corollary}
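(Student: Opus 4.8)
The plan is to reduce to Theorem~\ref{2.9} by localizing with respect to the conductor, exploiting the fact that for a finite flat Gilmer extension the nucleus is isomorphic to the semi-local ring $R_{(R:S)}$. This is exactly what converts the hypothesis ``the nucleus is a FMIR'' into the hypothesis ``the base ring is a FMIR'' required by Theorem~\ref{2.9}.

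First I would record the structural consequences of the hypotheses. Since $R\subseteq S$ is an \'etale Gilmer extension, Proposition~\ref{GUF} gives that it is finite and projective, hence flat (locally free). By Corollary~\ref{SUPF}, $\mathrm{Supp}_R(S/R)=\mathrm{V}_R((R:S))$ is a finite set, and as $R\subseteq S$ is a finite Gilmer extension these primes are all maximal, say $M_1,\dots,M_n$; thus $\mathrm{MSupp}(S/R)$ is finite and the localized extension $R_{(R:S)}\subseteq S_{(R:S)}$ of Definition~\ref{SEM} is defined. Because $R\subseteq S$ is finite, locally free and Gilmer, Lemma~\ref{CONZERO} applies and furnishes the isomorphism $R/(R:S)\cong R_{(R:S)}$. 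Hence the base ring $R_{(R:S)}$ of the localized extension is, up to isomorphism, the nucleus of $R\subseteq S$, which is a FMIR by hypothesis.

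Next I would transport the remaining data to the localized extension. Since $R\subseteq S$ is \'etale it is of finite presentation, so Remark~\ref{LOCAL}(2) shows that $R_{(R:S)}\subseteq S_{(R:S)}$ is again finite and \'etale. As $R_{(R:S)}\cong R/(R:S)$ is an Artinian FMIR, its prime ideals are all maximal and correspond bijectively to the elements $M$ of $\mathrm{Supp}_R(S/R)=\{M_1,\dots,M_n\}$; writing $M'$ for the maximal ideal of $R_{(R:S)}$ corresponding to $M$, localization of a localization (cf.\ Remark~\ref{LOCPROD}) gives the canonical identifications $(R_{(R:S)})_{M'}\cong R_M$ and $(S_{(R:S)})_{M'}\cong S_M$ of $R_M$-modules. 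Consequently the property ``$R_M$ is an infinite SPIR'' and the rank $\mathrm{rk}_{R_M}(S_M)$ are unchanged under passage to $R_{(R:S)}\subseteq S_{(R:S)}$, so the hypothesis $\mathrm{rk}_P(S_P)\leq 2$ whenever $R_P$ is an infinite SPIR holds verbatim for the localized extension.

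At this stage Theorem~\ref{2.9}(2) applies to $R_{(R:S)}\subseteq S_{(R:S)}$, whose base ring is the FMIR $R_{(R:S)}$ and which satisfies the rank condition, and yields that $R_{(R:S)}\subseteq S_{(R:S)}$ has FIP. Finally, Proposition~\ref{PROPN} transfers the FIP property back, so that $R\subseteq S$ has FIP. The only step demanding genuine care is the identification of the nucleus with $R_{(R:S)}$ through Lemma~\ref{CONZERO}, since this is what licenses the application of Theorem~\ref{2.9}; the verification that the rank and infinite-SPIR conditions survive localization is routine bookkeeping once the apparatus of Section~2 is in place.
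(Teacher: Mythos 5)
Your proof is correct and takes essentially the same route as the paper's: localize with respect to the conductor, use Lemma~\ref{CONZERO} to identify $R_{(R:S)}$ with the FMIR nucleus, observe that $R_M\to (R_{(R:S)})_{M'}$ is an isomorphism so the rank and infinite-SPIR hypotheses transfer, apply Theorem~\ref{2.9}, and pull FIP back via Proposition~\ref{PROPN}. The only blemish is your citation of Corollary~\ref{SUPF} (stated for FIP extensions) for the finiteness of $\mathrm{Supp}_R(S/R)$; the appropriate reference is Corollary~\ref{2.5}(1), though the fact is immediate for any finite Gilmer extension since the nucleus is Artinian.
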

\begin{proof}  We can replace $R$ with $R_{(R:S)}$, so that the base ring is a FMIR, because $(R_{(R:S)}: S_{(R:S)})= 0$. Then we can  use  Theorem~\ref{2.9}, because  the spectrum of $R':= R_{(R:S)}$ is the set of all extensions $M'$ of prime ideals $M$ in $\mathrm{Supp}(S/R)$, so that $R_M\to R'_{M'}$ is an isomorphism.\end{proof}

\begin{corollary}\label{2.10} Let $R$ be a ring which is a product of finitely many fields and finite rings and $R\subseteq S$, $S\subseteq T$ two \'etale extensions. Then $S\subseteq T$ has FIP.
\end{corollary}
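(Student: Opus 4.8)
The plan is to bootstrap Theorem~\ref{2.9} twice, using the first étale extension to transfer the hypothesis on the base ring from $R$ to $S$, and then applying the FIP criterion to $S\subseteq T$. First I would observe that $R$, being a finite product of fields and finite rings, is a FMIR: a field is a field and a finite ring is a finite product of finite local rings, so $R$ is a finite product of finite local rings and fields, which is exactly what Theorem~\ref{FMIR} describes. Crucially, none of the local rings $R_P$ (for $P\in\mathrm{Spec}(R)$) is an infinite SPIR, since the localizations of $R$ are precisely infinite fields, finite fields, and finite local rings.

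Next I would feed the étale extension $R\subseteq S$ into Theorem~\ref{2.9}(1). That statement immediately gives that $S$ is again a FMIR, and it supplies the key refinement: for $Q\in\mathrm{Spec}(S)$ with $P:=Q\cap R$, the ring $S_Q$ is an infinite SPIR if and only if $R_P$ is an infinite SPIR. Since we have just noted that no $R_P$ is an infinite SPIR, it follows that \emph{no} localization $S_Q$ is an infinite SPIR either. Hence every local ring of $S$ is a field or a finite ring, so $S$ is itself a finite product of fields and finite rings, i.e.\ a FMIR with no infinite SPIR among its local factors.

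Finally I would apply Theorem~\ref{2.9}(2) to the étale extension $S\subseteq T$, now with base ring $S$. The hypothesis of that part requires only that $\mathrm{rk}_P(T_P)\leq 2$ for those $P\in\mathrm{Supp}_S(T/S)$ for which $S_P$ is an infinite SPIR. By the previous paragraph there are no such prime ideals $P$, so this rank condition is satisfied vacuously. Theorem~\ref{2.9}(2) then yields that $S\subseteq T$ has FIP, as desired.

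The only substantive step is the transfer in the second paragraph, and it is entirely provided by the "if and only if'' correspondence in Theorem~\ref{2.9}(1); the essential point to get right is the direction of that equivalence, namely that an infinite SPIR localization of $S$ would force an infinite SPIR localization of $R$, which is impossible here. Once the absence of infinite SPIR local rings in $S$ is established, the rank hypothesis of Theorem~\ref{2.9}(2) becomes vacuous and the conclusion is automatic, so I do not expect any genuine obstacle beyond correctly invoking the two parts of Theorem~\ref{2.9}.
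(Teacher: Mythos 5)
Your proof is correct and follows what is clearly the intended argument: the paper states this corollary immediately after Theorem~\ref{2.9} with no written proof, precisely because it follows by applying part (1) to $R\subseteq S$ (no localization of $R$ is an infinite SPIR, hence none of $S$ is) and then part (2) to $S\subseteq T$, where the rank condition holds vacuously. Your handling of the direction of the equivalence in part (1) and the vacuousness of the hypothesis in part (2) is exactly right.
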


\begin{remark}\label{2.11}  Let $R$ be a ring with finitely many ideals and $p(X) \in R[X]$ a monic polynomial. We set $R_1:= R[X]/(p(X))$. If the degree of $p(X)$ is two, then $R\subseteq R_1$ has FIP by  \cite[Proposition 4.12]{Gil}, which states that there is a bijection from $[R,S]$ to the set of ideals of $R$.  So from now on, we suppose that $n>2$  and that $p(X)$ is  a separable polynomial, that is, $p(X) a(X) + p'(X)b(X) = R[X]$ for some $a(X), b(X) \in R[X]$.  We have $p(X) = (X-a_1)p_1(X)$ where  $a_1$ is the class of $X$ in $ R_1$ and  some $p_1(X) \in R_1[X]$. Then $R\subseteq R_1$ is faithfully flat, finite, \'etale and its conductor is $0$. Note that we can apply Theorem~\ref{2.8} if there is no infinite SPIR in the product of rings defining $R$, because the rank of $R_1$ over $R$ is constant and $>2$.

\end{remark}

\section{The Local  Case} We intend to  examine subintegral extensions when the base ring is local Henselian (for example Artinian). We recall that this kind of extension is never unramified.
\subsection{The FIP property via local subextensions}
We begin with a generalization of a a result of M. Kosters, established for a local Artinian ring \cite[Lemma 4.12]{K}.
It will allow us to reduce the study of the FIP property of a finite extension $R\subseteq A$, where $(R,M)$ is a local Henselian ring to local subextensions $R\subseteq B$ of $R\subseteq A$, where $(B,N)$ is a local ring. We observe that $R\subseteq B$ is a local morphism. We denote by $[R,A]_{loc}$ the set of all subextensions of $R\subseteq A$ that are local. 

We set $A_X:= \prod \, [A_N\, | \, N\in X]$, for any $X\subseteq \mathrm{Max}(A)$ and denote by $\Pi_A$ the set of all partitions of the finite set $\mathrm{Max}(A)$.
If $\{S_1,\ldots,S_n\}$ is a partition of $\mathrm{Max}(A)$, we set $A_i:= A_{S_i}$, for $i=1,\ldots,n$.

\begin{proposition}\label{4.1} Let $(R,M)$ be a local Henselian ring and $R\subseteq A$ a finite ring extension, so that $A\cong A_{\mathrm{Max}(A)}$.

  \item[\, \,(1)] Let $\{S_1,\ldots,S_n\}$ be a partition of $\mathrm{Max}(A)$ and let $B_i \in [R,A_i]_{loc}$  with maximal ideal $M_i$ for each $i\in \{1,\ldots,n\}$. Then $B:= \prod_{i=1}^n B_i $ belongs to $[R,A]$, $\mathrm{Max}(B) = \{ N_1,\ldots, N_n\}$, where $N_i:= M_i\times \prod_{j\neq i} B_j$. Moreover, the fiber of $N_i$ in $A$ is  $S_i$ and $B_{N_i} \cong B_i$.

\item [\, \,(2)] There is an injective map $\varphi: [R,A] \to \Pi_A\times  \{ \bigcup_{\mathcal P \in \Pi_A}\prod_{X\in \mathcal P} [R,A_X]_{loc}\}$. Moreover, the image of  $ \varphi$ is  $ \{(\mathcal P, \prod_{X\in \mathcal P}\, [R,A_X]_{loc} \, | \mathcal P \in \Pi_A\}$.
\end{proposition}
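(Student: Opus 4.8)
The plan is to exploit the defining property of a Henselian local ring: over $(R,M)$ every module-finite $R$-algebra splits as a finite product of local rings. Concretely, since $R\subseteq A$ is finite, $A$ carries a complete system of orthogonal idempotents $\{e_N\mid N\in\mathrm{Max}(A)\}$ realizing $A\cong\prod_{N\in\mathrm{Max}(A)}A_N$; the same applies to any $T\in[R,A]$, which is again finite over $R$ and hence decomposes as $T\cong\prod_{N'\in\mathrm{Max}(T)}T_{N'}$ with each $T_{N'}$ local. Throughout, localizations of products are handled by Remark~\ref{LOCPROD}, and contractions of maximal ideals along the integral extensions $B_i\subseteq A_i$ (resp. $T\subseteq A$) by lying over.

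For part (1), I would first observe that $A=\prod_{i=1}^nA_i$, because $\{S_1,\dots,S_n\}$ partitions $\mathrm{Max}(A)$ and $A_i=A_{S_i}=\prod_{N\in S_i}A_N$. Then $B=\prod_iB_i$ sits inside $\prod_iA_i=A$ and contains the diagonal image of $R$ since $R\subseteq B_i$ for each $i$; hence $B\in[R,A]$. The maximal ideals of a finite product of local rings are exactly the $N_i=M_i\times\prod_{j\neq i}B_j$, giving $\mathrm{Max}(B)=\{N_1,\dots,N_n\}$, and $B_{N_i}\cong B_i$ by Remark~\ref{LOCPROD}. For the fiber, a prime of $A$ lying over $N_i$ must come from the $A_i$-factor, and since $B_i\subseteq A_i$ is integral with $B_i$ local, every maximal ideal of $A_i$ contracts to $M_i$; as the maximal ideals of $A_i$ are indexed by $S_i$, the fiber of $N_i$ is precisely $S_i$. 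These are all routine product computations.

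For part (2), I would define $\varphi$ by analysis of a given $T\in[R,A]$. Each idempotent $f_{N'}$ of $T$ is an idempotent of $A$, hence a sum $f_{N'}=\sum_{N\in X_{N'}}e_N$ over a subset $X_{N'}\subseteq\mathrm{Max}(A)$; since the $f_{N'}$ form a complete orthogonal system, the $X_{N'}$ form a partition $\mathcal{P}(T)\in\Pi_A$, which is exactly the fiber partition of the surjection $\mathrm{Max}(A)\twoheadrightarrow\mathrm{Max}(T)$. Multiplying $T\subseteq A$ by $f_{N'}$ gives $T_{N'}=f_{N'}T\subseteq f_{N'}A=A_{X_{N'}}$, a local $R$-subalgebra, so that $T_{N'}\in[R,A_{X_{N'}}]_{loc}$. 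Set $\varphi(T):=(\mathcal{P}(T),(T_{N'})_{X_{N'}\in\mathcal{P}(T)})$. Injectivity is immediate: the data recovers every factor $T_{N'}$ and the partition, whence $T=\prod_{N'}T_{N'}$ is reconstructed exactly as in part (1). For the image, given any $\mathcal{P}\in\Pi_A$ and any family $(B_X)_{X\in\mathcal{P}}$ with $B_X\in[R,A_X]_{loc}$, part (1) produces $B=\prod_{X}B_X\in[R,A]$ with $\varphi(B)=(\mathcal{P},(B_X)_X)$; hence $\varphi$ surjects onto the stated set.

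The main obstacle I expect is the compatibility step in part (2): verifying that the idempotent decomposition of an intermediate algebra $T$ is genuinely subordinate to that of $A$, so that $T_{N'}$ embeds into the corresponding factor $A_{X_{N'}}$ and the induced partition coincides with the fiber partition. This rests on $T$ being finite over the Henselian local ring $R$, which forces $T$ to split into local factors whose idempotents already live in $A$; once this is secured, everything else is bookkeeping with products and the identifications proved in part (1).
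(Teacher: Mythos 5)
Part (1) of your proposal is correct and is essentially the paper's own argument: product computations, Remark~\ref{LOCPROD} for $B_{N_i}\cong B_i$, and lying over for the integral extensions $B_i\subseteq A_i$ to pin down the fibers. The genuine gap is in part (2), at its very first step: you assert that any $T\in[R,A]$ ``is again finite over $R$'' and then invoke the Henselian splitting of module-finite algebras to write $T\cong\prod_{N'\in\mathrm{Max}(T)}T_{N'}$. The proposition assumes only that $(R,M)$ is local Henselian, not Noetherian, and an $R$-subalgebra of a module-finite $R$-algebra need not be module-finite. For instance, if $R$ is a non-Noetherian Henselian valuation ring, $I$ a non-finitely generated ideal, and $A:=R[x]/(x^2)$, then $T:=R+Ix$ lies in $[R,A]$ but is not a finite $R$-module. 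Since every subsequent step of your part (2) — the idempotents $f_{N'}$ of $T$ being sums of the $e_N$, the identification $T_{N'}=f_{N'}T\subseteq A_{X_{N'}}$, the coincidence with the fiber partition, and the reconstruction argument for injectivity — rests on that splitting, the proof as written fails outside the Noetherian case.

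The splitting you need is nonetheless true, but for a different reason, and this is precisely where the paper works differently: $T$ is integral over $R$ (a subring of the integral extension $A$) and has finitely many maximal ideals (the contraction map $\mathrm{Max}(A)\to\mathrm{Max}(T)$ is surjective by lying over), and over a Henselian local ring the relevant decomposition persists for integral algebras. The paper extracts this from \cite[Proposition 2, p. 7]{R}: each $T_{N'}$ is integral over $R$ and Henselian, whence the localization of $A$ at $T\setminus N'$ is $\cong A_{X_{N'}}$, giving $T_{N'}\in[R,A_{X_{N'}}]_{loc}$; the equality $T=\prod_{N'}T_{N'}$ is then obtained not by quoting a splitting theorem but by a localization argument (the inclusion $T\subseteq\prod_{N'}T_{N'}$ becomes an equality after localizing at every maximal ideal of $T$, by part (1)). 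If you replace your finiteness claim by this integrality argument — or by writing $T$ as a filtered union of finite $R$-subalgebras and checking that their idempotents stabilize — your idempotent bookkeeping goes through and yields a proof essentially equivalent to the paper's; as written, it is only complete under the extra hypothesis that $R$ is Noetherian (e.g.\ Artinian), which is not the generality the proposition claims.
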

\begin{proof} By the very definition of an Henselian ring, we have that $A\cong  A_{\mathrm{Max}(A)}$ \cite[Chapter 1]{R}. Hence we will replace $A$ by the product.

(1) Since we  have $B_i\subseteq \prod \, [A_N \, | N\in S_i]$, it is clear that $B\subseteq A$, because $\{S_i\}_{i=1}^n$ is a partition of $\mathrm{Max}(A)$.  Then $\mathrm{Max}(B)$ is well known, within the ordering. It follows from Remark~\ref{LOCPROD} that  $B_{N_i}\cong B_i$. The maximal ideals of $A_i$ are lying over $M_i$. Observe  that there is a bijection $S_i \to \mathrm{Max}(A_i)$. We deduce from this fact that each element of $S_i$ is lying over $N_i$. Now if $P$ is a maximal ideal of $A$ which does not belong to $S_i$, there is some $j\neq i$, such that $P\in S_j$, and then $P$ contracts to $N_j\neq N_i$. Hence the fiber of $N_i$ in $A$ is $S_i$. 

(2)  Let $B\in [R,A]$, the map $\mathrm{Max}(A) \to \mathrm{Max}(B)$, defined by $P\mapsto P\cap B$,   defines an equivalence relation whose classes are a partition $\mathcal P =\{S_1,\ldots, S_n\}$ of $\mathrm{Max}(A)$. Set $N_i := P\cap B$ for (each) $P\in S_i$. 

 Now \cite[Proposition 2, p. 7]{R} shows that $B_{N_i}$ is integral over $R$ and is Henselian. Therefore, $A_{N_i} $ is  the product of its local rings and finally we get that $A_{N_i} \cong \prod \,[A_P \, | \, P\cap B= N_i]= A_i$. We deduce from (1) that $B_{N_i} \in [R,A_i]_{loc}$.
We then set $\varphi (B) := (\mathcal P, (B_{N_1},\ldots, B_{N_n}))$, where each $B_{N_i}\in [R,A_i]_{loc}$.  Suppose that $\varphi(B) = \varphi(C)$ for $B, C \in [R,A]$. Set $B_i:=B_{N_i}$ and $B'=\prod_{i=1}^nB_i$. Then, $\mathrm{Max}(B')=\{N'_1,\ldots,N'_n\}$, where $N'_i:=N_i\times \prod_{j\neq i}B_j$. By (1), we have $B'_{N'_i}=B_i=B_{N_i}$. But, $B\subseteq B'$ and $N'_i$ is the only maximal ideal of $B'$ lying over $N_i$. Then, $B'_{N'_i}=B'_{N_i}=B_{N_i}$, so that $B=B'=\prod_{i=1}^nB_i=\prod_{i=1}^nC_i=C$.
 
It follows that $\varphi$ is injective.
\end{proof}
We infer from the above proposition the next result. 

\begin{theorem}\label{4.2} Let  $R\subseteq A$ a finite ring extension, where $R$ is a local ring.  The following statements are equivalent:

\item[\, (1)] $R\subseteq A$ has FIP;
\item[\, (2)]  $[R,A]_{loc}$ is finite and $R/(R:A)$ is Henselian;
\item[\, (3 )]  $[R,A]_{loc}$ is finite and $R/(R:A)$ is 
Artinian.
\end{theorem}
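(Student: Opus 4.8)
The plan is to prove Theorem~\ref{4.2} via the cycle $(1)\Rightarrow(3)\Rightarrow(2)\Rightarrow(1)$, exploiting the fact that the FIP property descends to the conductor quotient and that Proposition~\ref{4.1} reduces everything to local subextensions once the base is Henselian. First I would reduce to the case $(R:A)=0$: since $R\subseteq A$ has FIP iff $R/(R:A)\subseteq A/(R:A)$ has FIP, and since $[R,A]_{loc}$ is controlled by the same quotient, I may assume throughout that the conductor vanishes and that I am testing whether $R$ itself is Henselian/Artinian. Note that $R\subseteq A$ is finite with $R$ local, so $R/(R:A)$ is a subring of the Artinian ring $A/(R:A)$, hence the equivalence $(2)\Leftrightarrow(3)$ is essentially the statement that for such a finite extension, $R/(R:A)$ being Henselian forces it to be Artinian and conversely — this rests on the recalled fact (Corollary~\ref{SUPF} and Proposition~\ref{SUP}(3)) that a finite integral extension has FCP exactly when its nucleus is Artinian.

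For $(1)\Rightarrow(3)$: assuming $R\subseteq A$ has FIP, it has FCP, so by Proposition~\ref{SUP}(3) the nucleus $R/(R:A)$ is Artinian; and $[R,A]_{loc}\subseteq[R,A]$ is automatically finite. The content is in proving $(3)\Rightarrow(2)$ and $(2)\Rightarrow(1)$. For $(2)\Rightarrow(1)$ I would invoke Proposition~\ref{4.1}(2) directly: when $R/(R:A)$ is Henselian, $A$ (after passing to the conductor quotient) decomposes as the product of its localizations at its finitely many maximal ideals, and the injection $\varphi\colon[R,A]\hookrightarrow\Pi_A\times\bigl\{\bigcup_{\mathcal P\in\Pi_A}\prod_{X\in\mathcal P}[R,A_X]_{loc}\bigr\}$ identifies $[R,A]$ with a subset of a \emph{finite} product: the set $\Pi_A$ of partitions of the finite set $\mathrm{Max}(A)$ is finite, and each factor $[R,A_X]_{loc}$ is finite by the hypothesis that $[R,A]_{loc}$ is finite (every $[R,A_X]_{loc}$ embeds into $[R,A]_{loc}$). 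Hence $[R,A]$ is finite, i.e. FIP holds.

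The main obstacle is $(3)\Rightarrow(2)$, where I must upgrade ``$R/(R:A)$ Artinian'' to ``$R/(R:A)$ Henselian.'' After reducing to $(R:A)=0$, the point is that $R$ is a local Artinian ring, and \emph{every} local Artinian (indeed every local, Noetherian, complete or more simply every zero-dimensional local) ring is Henselian. I would cite the standard fact that a local Artinian ring is Henselian (its residue field lifts idempotents trivially since there are no nontrivial idempotents and any finite algebra over it already decomposes as a product of local rings). Thus the implication is immediate once the reduction is in place; the only care needed is to confirm that passing to $R/(R:A)$ does not disturb the local hypothesis, which holds because $R$ local forces $R/(R:A)$ local.

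Consequently the cycle closes: $(1)\Rightarrow(3)$ by the FCP characterization of the nucleus, $(3)\Rightarrow(2)$ by Henselianity of local Artinian rings, and $(2)\Rightarrow(1)$ by the finiteness of the image of $\varphi$ in Proposition~\ref{4.1}. The delicate bookkeeping is entirely in verifying that the conductor reduction is compatible with all three conditions simultaneously, and that each $[R,A_X]_{loc}$ injects into $[R,A]_{loc}$ so that global finiteness of local subextensions feeds the product estimate.
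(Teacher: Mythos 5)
Your overall architecture --- reduce to $(R:A)=0$, run the cycle $(1)\Rightarrow(3)\Rightarrow(2)\Rightarrow(1)$, with $(3)\Rightarrow(2)$ coming from the standard fact that a local Artinian ring is Henselian and $(2)\Rightarrow(1)$ coming from the map $\varphi$ of Proposition~\ref{4.1} --- is exactly the inference the paper intends (the paper gives no proof beyond ``we infer from the above proposition''), and the conductor reduction and $(1)\Rightarrow(3)$ are fine. But your $(2)\Rightarrow(1)$ rests entirely on the parenthetical claim that ``every $[R,A_X]_{loc}$ embeds into $[R,A]_{loc}$,'' and this claim is both unjustified and false. There is no natural way to pass from a local subring $B$ of the direct factor $A_X$ to a local subring of $A=A_X\times A_{\mathrm{Max}(A)\setminus X}$: the obvious candidate $B\times C$ is never local, and gluing $B$ to the complementary factor along the residue field of $R$ is not injective, since it only remembers the maximal ideal of $B$ and not $B$ itself.

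Concretely, take $k=\mathbb{F}_p(u,v)$, $K=k(u^{1/p},v^{1/p})$, $R=k$ and $A=K\times k$. Then $R$ is a field, hence local, Artinian and Henselian with $(R:A)=0$, and $[R,A]_{loc}=\{\Delta(k)\}$ is finite: any local element of $[R,A]$ is a reduced Artinian local ring, i.e.\ a field, and a subfield $F$ of $K\times k$ containing the diagonal $\Delta(k)$ maps isomorphically (and $k$-linearly) onto $k$ under the second projection, forcing $F=\Delta(k)$. So conditions $(2)$ and $(3)$ hold. Yet for $X$ consisting of the maximal ideal of the first factor one has $[R,A_X]_{loc}=[k,K]$, which is infinite (the classical purely inseparable extension with infinitely many intermediate fields), and indeed $R\subseteq A$ fails FIP, since $T\mapsto T\times k$ embeds $[k,K]$ into $[R,A]$. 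Thus your embedding step fails, and the failure is irreparable: finiteness of $[R,A]_{loc}$, read literally as the set of local members of $[R,A]$, only controls the one-block partition in $\varphi$, not the factors $[R,A_X]_{loc}$ attached to finer partitions, and the example above even contradicts the implication $(2)\Rightarrow(1)$ under that literal reading. The only way to make the statement --- and your argument --- correct is to interpret $[R,A]_{loc}$ as the set of all local subextensions of all the partial products, i.e.\ $\bigcup_{\emptyset\neq X\subseteq\mathrm{Max}(A)}[R,A_X]_{loc}$, which is what Proposition~\ref{4.1} actually requires; with that reading your $(2)\Rightarrow(1)$ is immediate and no embedding claim is needed. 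As written, however, this step is a genuine gap. (A minor further point: your aside that $(2)\Leftrightarrow(3)$ amounts to ``Henselian iff Artinian'' is misleading --- a Henselian local ring need not be Artinian, e.g.\ a complete DVR --- but this is harmless since your cycle never uses it.)
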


We note here a generalization of a result of M. Kosters, given in a more general context but with less results, because our context is richer.

\begin{proposition}\label{4.3} (M. Kosters) Let $(R,M)$ be an infinite local ring and $R\subseteq S$ a Gilmer extension. Then $R\subseteq S$ has FIP if and only if $R/M\subseteq S/MS$ has FIP, $MS/M$ is a uniserial $R$-module and $R \subseteq R + \mathrm{Nil}(S)$ has FIP.
\end{proposition}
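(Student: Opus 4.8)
The plan is to reduce to a finite extension over an Artinian local ring and then prove the two implications, the decisive one being the passage between the ring-theoretic FIP property and the module-theoretic uniserial condition. Since $R\subseteq S$ is a Gilmer extension, once it has FIP it has FCP and hence is finite by Proposition~\ref{GUF}; as FIP, finiteness and the Gilmer property are insensitive to passing to $R/(R:S)\subseteq S/(R:S)$ (Proposition~\ref{QUOT}, Proposition~\ref{PROPN}), I may assume $R$ is Artinian local and $R\subseteq S$ is finite, so that $S$ is Artinian and, $M$ being nilpotent, $MS\subseteq\mathrm{Nil}(S)$ with $MS\neq S$ by Nakayama. The residue field $R/M$ is then infinite by the observation preceding Theorem~\ref{FMIR}. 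I would open with the module fact that makes ``uniserial'' the right hypothesis: a finitely generated module over an Artinian local ring with infinite residue field has only finitely many submodules if and only if it is uniserial. Thus the finiteness hidden in FIP can be compatible with the infinitely many elements of $R/M$ only if the ramified layer $MS/M$ is totally ordered.

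For the direct implication, two of the three conditions are immediate. As $R+\mathrm{Nil}(S)\in[R,S]$, the inclusion $[R,R+\mathrm{Nil}(S)]\subseteq[R,S]$ shows that $R\subseteq R+\mathrm{Nil}(S)$ has FIP. For $R/M\subseteq S/MS$ I would invoke Proposition~\ref{QUOT} with the ideal $J:=MS$ of $S$: since $MS\cap R$ is a proper ideal of $R$ containing $M$ it equals $M$, so $R/M\subseteq S/MS$ has FIP. The real work is the necessity of ``$MS/M$ uniserial''. I would argue by contraposition: every $R$-submodule $V$ with $M\subseteq V\subseteq MS$ and $M^2S\subseteq V$ satisfies $V^2\subseteq (MS)^2=M^2S\subseteq V$, hence $R+V$ is a subring of $R+\mathrm{Nil}(S)$, and $V$ is recovered as $(R+V)\cap MS=V$ (using $R\cap MS=M\subseteq V$); so $V\mapsto R+V$ embeds the submodule lattice of $MS/(M+M^2S)$ into $[R,S]$. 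If $MS/M$ were not uniserial this lattice would, over the infinite field $R/M$, be infinite, contradicting FIP. Propagating this layerwise argument from the top layer to the whole module $MS/M$ is the delicate point.

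For the converse I would reconstruct FIP of $R\subseteq S$ from the three finite data, in the spirit of the injection $\varphi$ of Proposition~\ref{4.1}. The idea is that an intermediate ring $T\in[R,S]$ is pinned down by three coordinates: its ``horizontal'' image $\overline{T}\in[R/M,S/MS]$, recording how $T$ meets the factors of $S$ over the closed fibre; its ``subintegral'' part $T\cap(R+\mathrm{Nil}(S))\in[R,R+\mathrm{Nil}(S)]$; and its ``depth'' $MT/M$, an $R$-submodule of the uniserial, hence finite-lattice, module $MS/M$. Finiteness of $[R/M,S/MS]$, of $[R,R+\mathrm{Nil}(S)]$ and of the submodule lattice of $MS/M$ then bounds $|[R,S]|$, once one checks that $T$ is recovered from this triple, i.e. that the resulting map
$$
[R,S]\longrightarrow [R/M,S/MS]\times[R,R+\mathrm{Nil}(S)]\times\{\,R\text{-submodules of }MS/M\,\}
$$
is injective.

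The main obstacle is precisely this injectivity: showing that the horizontal datum, the subintegral part and the ramified depth jointly determine $T$, in a non-reduced and ramified situation where $\mathrm{Nil}(S)$, $MS$ and the idempotent decomposition of $S/MS$ all interact. This is where both the uniseriality of $MS/M$ and the infiniteness of $R/M$ are used essentially, and where the injectivity argument of Proposition~\ref{4.1}(2) must be upgraded from the separable/decomposed setting to one that also carries the ramified information encoded by $MS/M$.
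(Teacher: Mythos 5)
Your proposal is a programme rather than a proof: in each direction it stops at a step that you yourself flag as unresolved, and those are precisely the steps that carry the content of the proposition. (For comparison, the paper does not reprove these steps either: its proof reduces to $(R:S)=0$ and then imports the proof of \cite[Proposition 4.13]{K}, observing that Kosters' step II can be replaced by Theorem~\ref{4.2}.) Concerning the necessity of uniseriality, your embedding $V\mapsto R+V$ is correct for submodules $V$ with $M+M^2S\subseteq V\subseteq MS$, but the contrapositive you draw from it is not valid: finiteness of the lattice of the top layer $MS/(M+M^2S)$ only yields, via Nakayama, that $MS/M$ is a cyclic $R$-module, and a cyclic module over an Artinian local ring need not be uniserial; so non-uniseriality of $MS/M$ does not force that particular lattice to be infinite. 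The repair is available with your own tools: for every $i\geq 1$, a submodule $V$ with $M+M^{i+1}S\subseteq V\subseteq M+M^iS$ satisfies $V^2\subseteq M^2+M^{i+1}S+M^{2i}S\subseteq M+M^{i+1}S\subseteq V$, so $R+V$ is again a ring and is recovered by $(R+V)\cap MS=V$; hence FIP forces every layer $(M+M^iS)/(M+M^{i+1}S)$, a vector space over the infinite field $R/M$, to have dimension at most one, and then Nakayama shows that every nonzero submodule of $N:=MS/M$ equals some $M^iN$, which is uniseriality. You did not carry out this propagation, and as written your argument does not establish the stated condition.

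The converse direction is the serious gap, with no repair offered. Everything rests on the injectivity of $T\mapsto\bigl(\overline T,\ T\cap(R+\mathrm{Nil}(S)),\ MT/M\bigr)$, which you announce as ``the main obstacle'' and do not prove; but this reconstruction of an intermediate ring from its horizontal, nilpotent and ramified data is exactly the hard core of the statement --- it is what Kosters' multi-step argument supplies and what the paper invokes. Checking it in small examples (say $S=R^2$ with $R$ an infinite SPIR) is easy, but nothing in your text addresses the general interaction of idempotents of $S/MS$ with $\mathrm{Nil}(S)$ that the injectivity requires. There is also a logical slip at the outset of this direction: you reduce to a finite extension by deriving finiteness from FIP via Proposition~\ref{GUF}, but in the converse FIP is the conclusion, not a hypothesis. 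Finiteness must instead be extracted from the three assumed conditions (FIP of $R/M\subseteq S/MS$ gives finiteness modulo $MS$, and nilpotency of $MS$, available since the nucleus is Artinian, then gives that $S$ is integral and of finite type, hence finite, over $R$); the proposal does not do this. In sum, you correctly dispose of the two easy thirds of the forward implication, and nothing more.
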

\begin{proof} To prove this, first observe that we can reduce to the case $(R:S)=0$. Then  use some parts of the proof of \cite[Proposition 4.13]{K}, the step II being superfluous because of Theorem~\ref{4.2}.
\end{proof}

Proposition~\ref{4.3} is  trivial if either $(R:S) = M$ or $R\subseteq S$ is infra-integral, because in that case each $T\in [R,S]$ is contained in $R+ \mathrm{Nil}(S)$.  We will examine the condition $MS/M$ is uniserial in the following.

\begin{proposition}\label{4.4} Let $(R,M)$ be an infinite local  ring and an unramified Gilmer extension $R\subseteq S$, whose nucleus is  not a  field. Then $R\subseteq S$ has FIP if and only if $R \subseteq R+MS$ has FIP and $MS/M$ is a uniserial  $R$-module. In case $(S,N)$ is local, then $MS/M= N/M$.
\end{proposition}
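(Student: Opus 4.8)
The plan is to deduce Proposition~\ref{4.4} from Proposition~\ref{4.3} by analyzing the subintegral part $R\subseteq R+\mathrm{Nil}(S)$ under the additional hypothesis that $R\subseteq S$ is unramified with non-field nucleus. First I would reduce to the case $(R:S)=0$, exactly as in Proposition~\ref{4.3}; this is harmless for the FIP property and makes $R$ Artinian by the Gilmer hypothesis. By Proposition~\ref{4.3}, the extension $R\subseteq S$ has FIP if and only if the three conditions hold: $R/M\subseteq S/MS$ has FIP, $MS/M$ is uniserial, and $R\subseteq R+\mathrm{Nil}(S)$ has FIP. The heart of the matter is to show that under the unramified hypothesis, the first and third conditions collapse: the residual condition becomes automatic (an \'etale/unramified algebra over the field $R/M$ has FIP by Theorem~\ref{ET}), and the subintegral extension $R\subseteq R+\mathrm{Nil}(S)$ is forced to be trivial.

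The crucial observation is that $R\subseteq R+\mathrm{Nil}(S)$ is always subintegral, and by Remark~\ref{NOUN}(2) a nontrivial finite subintegral extension is never unramified. Since $R\subseteq S$ is unramified, Proposition~\ref{DUN} gives that every subextension $R+\mathrm{Nil}(S)\subseteq S$ is unramified, but I must control the subintegral part itself. The key step is to argue that the unramifiedness of $R\subseteq S$, combined with Proposition~\ref{RED}(2) (so that $MS=Q_1\cap\cdots\cap Q_n$ is radical in the Artinian ring $S$), forces $\mathrm{Nil}(S)\subseteq R$, hence $R+\mathrm{Nil}(S)=R$, making the third FIP condition vacuous. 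Concretely, $\mathrm{Nil}(S)$ sits inside each $Q_i$, and the radicality of $MS$ together with Property~UN (giving $PS_Q=QS_Q$) should pin down $\mathrm{Nil}(S)$ in terms of the conductor; I would show the subintegral closure of $R$ in $S$ equals $R$ because any proper subintegral minimal step would contradict the unramified property via Remark~\ref{NOUN}(2). With that, Proposition~\ref{4.3} reduces precisely to ``$R\subseteq R+MS$ has FIP and $MS/M$ is uniserial.''

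It then remains to replace the residual condition ``$R/M\subseteq S/MS$ has FIP'' by the condition ``$R\subseteq R+MS$ has FIP.'' Here I would use that $R/M\subseteq S/MS$ is an \'etale algebra over the field $R/M$ (since $R\subseteq S$ is unramified and $MS$ is radical), so by Theorem~\ref{ET} it automatically has FIP and $S/MS$ is a FMIR; thus the residual condition is free. The genuine content is transferred to $R\subseteq R+MS$: I would show via a Noether-isomorphism/quotient argument (in the spirit of Proposition~\ref{QUOT}) that FIP for $R\subseteq R+MS$ encodes both the residual FIP data and the module structure of $MS$, and that when the nucleus is not a field the relevant intermediate algebras all lie between $R$ and $R+MS$. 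The last sentence, that $MS/M=N/M$ when $(S,N)$ is local, follows directly from Property~UN: localizing at the unique maximal ideal $N$ gives $MS_N=NS_N$, and since $S$ is local Artinian with $MS$ radical we get $MS=N$.

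The main obstacle I expect is the second paragraph: rigorously showing that the unramified hypothesis eliminates the subintegral contribution, i.e.\ that $R\subseteq R+\mathrm{Nil}(S)$ is trivial, and cleanly reconciling this with the possibility that $S$ itself has nilpotents coming from the ramified part of $R$ (the hypothesis says the \emph{nucleus} is not a field, but $R$ may be non-reduced). The delicate point is that ``unramified'' constrains $S$ over $R$ but not the internal nilpotents of $R$, so I would need to carefully separate $\mathrm{Nil}(R)$ from $\mathrm{Nil}(S)$ and verify that the extension $R\subseteq R+\mathrm{Nil}(S)$ adds nothing new over $R$ precisely because any added element would produce a ramified minimal step (Theorem~\ref{CANMIN}(1)) contradicting unramifiedness.
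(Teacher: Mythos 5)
Your overall skeleton coincides with the paper's: reduce to $(R:S)=0$, apply Proposition~\ref{4.3}, dispose of the residual condition via Theorem~\ref{ET}, and deduce the last statement from Property~UN (that final part of your argument is correct). But the central step of your proof is false. You claim that unramifiedness forces $\mathrm{Nil}(S)\subseteq R$, hence $R+\mathrm{Nil}(S)=R$, so that the third condition of Proposition~\ref{4.3} becomes vacuous. This rests on a misreading of Proposition~\ref{DUN}: for $T\in[R,S]$ with $R\subseteq S$ unramified, it is the \emph{upper} extension $T\subseteq S$ that is unramified, not $R\subseteq T$. Consequently Remark~\ref{NOUN}(2) produces no contradiction from a nontrivial subintegral part: a subintegral subextension of an unramified extension is simply not unramified, and that coexistence is precisely the phenomenon Section 4 of the paper studies. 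Remark~\ref{2.8}(3) is an explicit counterexample to your claim within the hypotheses of Proposition~\ref{4.4}: with $k$ infinite, $R=k[z]$ ($z^2=0$), $T=R[x]$ ($zx=x^2=0$), $S=T[y]$ ($y^2=y$, $zy=x$), the extension $R\subset S$ is unramified and Gilmer with $(R:S)=0$ and nucleus $R$ not a field, yet $x\in\mathrm{Nil}(S)\setminus R$, so $R+\mathrm{Nil}(S)\supseteq T\neq R$; here the seminormalization of $R$ in $S$ is $T$, not $R$.

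The correct identification --- and this is essentially the whole content of the paper's proof --- is $\mathrm{Nil}(S)=MS$, not $\mathrm{Nil}(S)\subseteq R$. After reducing to $(R:S)=0$, the ring $R$ is Artinian local, so $M$ is nilpotent and $MS\subseteq\mathrm{Nil}(S)$; conversely, since $R\subseteq S$ is unramified, the fiber $R/M\subseteq S/MS$ is \'etale over the field $R/M$, hence $S/MS$ is reduced and $\mathrm{Nil}(S)\subseteq MS$. With $\mathrm{Nil}(S)=MS$, the third condition of Proposition~\ref{4.3} literally reads ``$R\subseteq R+MS$ has FIP'', and combined with the automatic FIP of $R/M\subseteq S/MS$ (Theorem~\ref{ET}) the stated equivalence drops out at once. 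In particular your third paragraph, which tries to transfer the (already automatic) residual FIP condition onto $R\subseteq R+MS$ by a quotient argument, is both unnecessary and unsupported: the condition ``$R\subseteq R+MS$ has FIP'' is exactly Kosters' third condition, not a reformulation of the residual one.
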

\begin{proof} We know that if $R\subseteq S$ is unramified, then $R/M \subseteq S/MS$ is \'etale and then $\mathrm{Nil}(S) = MS$, because $S/MS$ does
be reduced. Moreover, an \'etale algebra over a field has FIP. For the last statement use Property~UN.
\end{proof}

\subsection{The subintegral part of an \'etale extension}

Let $R$ be ring and $n>1$ a positive integer. We have seen in Lemma~\ref{2.1} that $R\subseteq R^n$ is an infra-integral \'etale extension, which is not seminormal if $R$ is not reduced. In this case, there is a subintegral part $R\subset \substack{+\\ R^n}R$ of the extension $R\subset R^n$. We consider the inverse problem. Given a subintegral extension $R\subset T$, does there exist some infra-integral \'etale extension $R\subset S$ such that $T\in[R,S]$? Assuming that $R$ is a local Artinian ring, Theorem~\ref{2.51} can be used. We first give a general result.

\begin{proposition}\label{4.40} Let $R\subset T$ be a ring extension where $R$ is a local Artinian ring. There exists an infra-integral \'etale extension $R\subset S$ such that $T\in[R,S]$ if and only if there exists some integer $n$ such that $T\in[R,R^n]$ (up to an isomorphism of $R$-algebras). 
\end{proposition}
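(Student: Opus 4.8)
The plan is to read off both implications directly from the structure theorem for infra-integral \'etale extensions over an Artinian base, Theorem~\ref{2.4}(4). The pivotal simplification is that, $R$ being \emph{local} Artinian, its spectrum consists of the single prime $M$ (the maximal ideal of $R$), and localization at $M$ is invisible: every element of $R\setminus M$ is a unit of $R$, hence of any extension, so that $R_M=R$ and $S_M=S$ for any extension $R\subseteq S$. This turns the ``local'' output of Theorem~\ref{2.4}(4) into a genuinely global identification of $S$.

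For the forward implication I would begin with an infra-integral \'etale extension $R\subset S$ having $T\in[R,S]$. Since $R$ is Artinian, Theorem~\ref{2.4}(1) makes $S$ a projective finite $R$-module, and Theorem~\ref{2.4}(4) applied at the unique prime $P=M$ yields an $R$-algebra isomorphism $S=S_M\cong (R_M)^{n(M)}=R^{n(M)}$. Writing $n:=n(M)$, which is $\geq 2$ because $R\subset S$ is proper, this map commutes with the structure morphisms, hence carries the bottom $R\subseteq S$ onto the diagonal $R\subseteq R^n$ and induces a poset isomorphism $[R,S]\cong[R,R^n]$ fixing $R$. The image of $T$ is therefore an element of $[R,R^n]$; that is, $T\in[R,R^n]$ up to an isomorphism of $R$-algebras.

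For the converse I would simply take $S:=R^n$. By Proposition~\ref{1.5} the diagonal extension $R\subset R^n$ is \'etale, indeed a trivial \'etale cover, and by Lemma~\ref{2.1} it is infra-integral; since $T\neq R$ forces $n\geq 2$, this $S$ is a proper infra-integral \'etale extension of $R$. As $T$ is $R$-isomorphic to an element of $[R,R^n]=[R,S]$, we obtain the desired $S$ with $T\in[R,S]$ up to $R$-algebra isomorphism, and the equivalence follows.

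I do not expect a serious obstacle: once Theorem~\ref{2.4}(4) is in hand the argument is essentially formal. The only points deserving care are the verification $S_M=S$, which is immediate from $R$ being local so that $R\setminus M$ consists of units, and the consistent handling of the phrase ``up to an isomorphism of $R$-algebras'', which is harmless precisely because the $R$-algebra isomorphism $S\cong R^n$ restricts to the identity on $R$ and so matches intermediate algebras with intermediate algebras. Notably, no finiteness or FIP hypothesis on $T$ enters; this proposition is a purely structural statement preparing the ground for the FIP analysis in Proposition~\ref{4.42}.
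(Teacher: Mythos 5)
Your proof is correct and follows essentially the same route as the paper: the paper also obtains $S\cong R^n$ from the structure theory of infra-integral \'etale extensions over an Artinian base (it cites Theorem~\ref{2.51}, which rests on Corollary~\ref{2.5} and hence on the very statement Theorem~\ref{2.4}(4) that you invoke directly), and handles the converse exactly as you do, via Lemma~\ref{2.1} and Proposition~\ref{1.5} together with transport of structure along the $R$-algebra isomorphism. Your explicit remarks that $S_M=S$ over the local base and that the isomorphism $S\cong R^n$ fixes $R$ and so matches $[R,S]$ with $[R,R^n]$ are details the paper leaves implicit.
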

\begin{proof} Assume that there exists some integer $n$ such that $T\in[R,R^n]$ (up to an isomorphism $\varphi$ of $R$-algebras). Set $S:=\varphi(R^n)$. Since $R\subset R^n$ is an infra-integral \'etale extension by Lemma~\ref{2.1}, so is $R\subset S$ and we have $T\in[R,S]$. Conversely, assume that there exists an infra-integral \'etale extension $R\subset S$ such that $T\in[R,S]$. Then, $R\subset S$ is a Gilmer extension since $R$ is Artinian. Applying Theorem~\ref{2.51}, we get that $S\cong R^n$ for some integer $n$, giving that $T\in[R,R^n]$ (up to an isomorphism of $R$-algebras).  
\end{proof} 

The following example shows how to build an extension $R\subset S$ satisfying the conditions of Proposition~\ref{4.40}, given a particular minimal ramified  (and then subintegral) extension $R\subset T$.  

\begin{example}\label{4.41} Let $(R,M)$ be a SPIR, with $M=Rt$, satisfying $t^2=0,\ t\neq 0$ and let $R\subset T$ be a minimal ramified extension such that $T=R+Rx$, with $tx=x^2=0$ (it is enough to take $T:=R[X]/(X^2,tX)$). Then, $P:=Rx+Rt$ is the maximal ideal of $T$ and $M=(R:T)=(R:x)$. There is an  injective morphism of $R$-agebras  $\psi: T\to R^2$ such that $\psi(a+bx)=(a+bt,a)$. Indeed, $\psi$ is well defined. Let $z\in T$, with $z=a+bx=c+dx,\ a,b,c,d\in R$. Since $(d-b)x=a-c\in R$, it follows that $d-b\in M$, so that there exists $\lambda\in R$ such that $d-b=\lambda t$. Then, we get that $a-c=\lambda tx=0$, giving $a=c$ and $b=d-\lambda t$, so that $a+bt=c+(d-\lambda t)t=c+dt$. The same reasoning shows that $\psi$ is injective, since $\psi(a+bx)=(a+bt,a)=(0,0)$ implies first, that $a=0$, and then $bt=0$, so that $b\in M$, giving $bx=0$.  Hence, $\psi(T)$ is an $R$-subalgebra of $R^2$. Identifying $T$ and $\psi(T)$, so that $1=(1,1)$ and $x=(t,0)$, we get that $T\subset R^2$ is a minimal decomposed extension, with $R^2=T[y]$, where $y=(1,0)$ satisfies $y^2-y=0\in P$ and $Py=Rxy+Rty=Rx\subseteq P$. It follows that $T=\substack{+\\ R^2}R$, so that $T\in [R,R^2]$, where $R\subset R^2$ is an infra-integral \'etale extension.
\end{example}

According to Proposition~\ref{4.40}, we are led to the following problem. Given a subintegral FIP extension $R\subset T$, what are the conditions in order that $T\in[R,R^n]$ for some integer $n$? We first observe that $R$ must be non reduced, since  $R\subset R^n$ is not seminormal (Lemma~\ref{2.1}). In order that $R\subset R^n$ has FIP,  $R$ is necessarily an FMIR (Theorem~\ref{PROD}), and Proposition~\ref{4.40} forces $R$ to be either a non-reduced finite local ring  or an infinite SPIR (in which case $n=2$). 

\begin{proposition}\label{4.42} Let $R\subset T$ be a subintegral FIP extension where $(R,M)$ is either a finite local ring which is not a field, or a SPIR. 
\begin{enumerate}
\item There exists  some integer $n$ (with $n=2$ if $R$ is an infinite SPIR) such that $T=\substack{+\\ R^n}R$ if and only if $T=R+MR^n$.

\item There exists  some integer $n$ (with $n=2$ if $R$ is an infinite SPIR)  such that $T\in[R,R^n]$  if and only if $T=R+N$ for some $R$-submodule $N$ of $MR^n$ containing $M$ satisfying $N^2\subseteq N$. 
\end{enumerate}
\end{proposition}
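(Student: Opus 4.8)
The plan is to reduce both statements to a single structural computation concerning the seminormalization of the diagonal extension, namely the identity $\substack{+\\ R^n}R = R + MR^n$. First I would record that, since $R$ is Artinian local with $\mathrm{Nil}(R) = M$, the ideal $MR^n$ of $R^n$ generated by the diagonal image of $M$ is exactly the product ideal $M\times\cdots\times M$, which coincides with $\mathrm{Nil}(R^n) = \mathrm{Nil}(R)^{\times n}$. Then $R + MR^n$ is local with maximal ideal $MR^n$, and $(R+MR^n)/MR^n \cong R/M$, so $R \subseteq R + MR^n$ is integral, spectrally bijective with trivial residual extension, hence subintegral.

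Next I would show that $R + MR^n$ is the \emph{largest} subintegral subextension. Given any $B \in [R, R^n]$ with $R \subseteq B$ subintegral, $B$ is local with unique maximal ideal $N$ and $\kappa(N) = R/M$; for each coordinate projection $\pi_i : R^n \to R$ the composite $B \hookrightarrow R^n \xrightarrow{\pi_i} R \to R/M$ is a ring map to a field whose kernel is a prime of $B$ over $M$, hence equals $N$. Thus every $b = (b_1,\dots,b_n) \in B$ has all coordinates congruent mod $M$, so $b \in R + MR^n$ and $B \subseteq R + MR^n$. This settles part (1): under the identification $\substack{+\\ R^n}R = R + MR^n$ the two conditions coincide, and the restriction $n = 2$ for an infinite SPIR is forced because $R \subset T$ has FIP; indeed by Theorem~\ref{PROD} (or directly, by counting the submodules of $MR^n/M$ over the infinite residue field) the extension $R \subseteq R + MR^n$ has FIP only when $n \leq 2$.

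For part (2), the forward implication uses that $R \subset T$ is subintegral and $T \in [R, R^n]$, whence $T \subseteq \substack{+\\ R^n}R = R + MR^n$ by maximality. Setting $N := T \cap MR^n$, every element of $T$ is a diagonal element plus a nilpotent, so $T = R + N$; moreover $N$ is an $R$-submodule of $MR^n$, it contains the diagonal image of $M$ (as $M \subseteq R \cap MR^n \subseteq T \cap MR^n = N$), and $N^2 \subseteq N$ since $N$ is an ideal of the ring $T$. Conversely, if $T = R + N$ with $N$ an $R$-submodule of $MR^n$ containing $M$ and satisfying $N^2 \subseteq N$, then $R\cdot N \subseteq N$ together with $N^2 \subseteq N$ makes $T$ closed under multiplication, so $T \in [R, R^n]$; and $M \subseteq N \subseteq \mathrm{Nil}(R^n)$ forces $R \cap N = M$, so $R \subseteq T$ is subintegral exactly as above.

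The main obstacle I expect is the infinite SPIR case and the claim that one may take $n = 2$. Here I would argue that the FIP hypothesis on $R \subset T$ bounds the size of $N$: over the infinite residue field $R/M$, finiteness of $[R,T]$ forces $N/M$ to have only finitely many submodules, which by Theorem~\ref{PROD} compels the rank to collapse to $n = 2$; then $T$ can be re-embedded into $R^2$ with image $R + N'$, $N' \subseteq MR^2$. The delicate point is to carry out this re-embedding intrinsically, independent of the original $n$, and to verify it respects the conditions $M \subseteq N'$ and $(N')^2 \subseteq N'$; this is where Theorem~\ref{PROD} and the explicit structure of $MR^2$ computed in Example~\ref{4.41} do the decisive work.
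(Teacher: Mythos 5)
Your proposal is correct, and for the crucial identity $\substack{+\\ R^n}R=R+MR^n$ it takes a genuinely different route from the paper. The paper extracts this identity from its seminormality machinery: since $T=\substack{+\\ R^n}R$ makes $T\subset R^n$ seminormal, the conductor $(T:R^n)$ is a radical ideal of $R^n$, hence contains $\mathrm{Nil}(R^n)=MR^n$, forcing $T=R+MR^n$; the converse applies Proposition~\ref{2.2} to $T\subset R^n$, which tacitly requires that extension to have FIP (true, since modulo $MR^n$ it becomes $R/M\subset(R/M)^n$, but the paper leaves this unsaid). You instead verify directly that $R+MR^n$ is local with maximal ideal $MR^n=\mathrm{Nil}(R^n)$ and residue field $R/M$, hence subintegral over $R$, and that it is the largest subintegral member of $[R,R^n]$: for subintegral $B$ with maximal ideal $N$, each composite $B\hookrightarrow R^n\xrightarrow{\pi_i}R\to R/M$ has kernel $N$, so $\pi_i(N)\subseteq M$ for every $i$, giving $N\subseteq MR^n$ and $B=R+N\subseteq R+MR^n$. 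Your argument is more elementary, needs neither Proposition~\ref{2.2} nor any FIP hypothesis for the identification itself, and makes the maximality transparent; the paper's route is shorter given its toolkit and produces $(T:R^n)=MR^n$ as a by-product. Part (2) is handled essentially identically in both proofs. Concerning the parenthetical ``$n=2$ when $R$ is an infinite SPIR'': your counting argument for part (1) (FIP fails for $R\subset R+MR^n$ once $n\geq 3$ over an infinite residue field) is sound and actually goes beyond the paper's proof, which disposes of this point only in the informal discussion preceding the proposition via Theorem~\ref{PROD} and Proposition~\ref{4.40}; and the re-embedding of $T$ into $R^2$ that you flag as the delicate step in part (2) is likewise not carried out in the paper's proof, so your acknowledged incompleteness there matches, rather than falls below, the paper's own level of detail.
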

\begin{proof} Since $R\subset T$ is a subintegral FIP extension, it follows that $T$ is a local Artinian ring. Let $N$ be its maximal ideal, so that $T=R+N$ because $R/M\cong T/N$. Moreover, $R$ is not reduced.

(1) Assume that $T=\substack{+\\ R^n}R$. Then, $T\subset R^n$ is a seminormal extension with $(T: R^n)=N=MR^n$ in view of Proposition~\ref{2.2}, since $N$ is the only prime ideal of $T$ and $MR^n$ is the intersection of the prime (maximal) ideals of $R^n$. So, we get that $T=R+MR^n$. 

Conversely, assume that $T=R+MR^n$. Since $(T: R^n)=MR^n$ is a radical ideal of $R^n$,  then $T\subset R^n$ is a seminormal extension by Proposition~\ref{2.2}, so that $T=\substack{+\\ R^n}R$ since $R\subset T$ is  subintegral.

(2) Assume that there exists some integer $n$ such that $T\in[R,R^n]$. Then, $T\subseteq \substack{+\\ R^n}R$ since $R\subset T$ is  subintegral. It follows that $M\subseteq N\subseteq MR^n$ in view of (1), with $N$ an $R$-module. But, as $N$ is an  ideal of $T$, we have $N^2\subseteq N$.

Conversely, assume that $T=R+N'$ for some $R$-submodule $N'$ of $MR^n$ containing $M$ satisfying $N'^2\subseteq N'$.  Then, $T\subset R^n$, giving $T\in[R,R^n]$. Moreover, $N'^2\subset N'$ implies that $TN'=RN'+N'^2=N'$. Hence, $N'$ is an ideal of $T$ such that $N'\cap R=M$. Moreover, $N'$ is a prime ideal by a Noether's isomorphism for rings $R/M \cong T/N'$,  so that $N'=N$.  
\end{proof}

\subsection{Subintegral extensions over local rings}

We now give some  results about (necessarily local) subintegral extensions $R\subseteq S$ having  FIP in case they are Gilmer extensions. We need a result about an extension $R\subseteq S$, where $(R/(R:S),M/(R:S))$ is a local Artinian ring. We will set $R_k: = R+ M^{n-k}S$, giving rise to a sequence $R_0:= R \subseteq R_1\subseteq \cdots\subseteq R_{n-1}= R+MS  \subseteq S$, where $n \geq 1$ is the nilpotency index of $M/(R:S$). It is clear that each extension $R_k \subseteq R_{k+1}$ is a $\Delta_0$-extension in the sense of \cite{HP}, that is each $R_k$-submodule of $R_{k+1}$, containing $R_k$, is an element of $[R_k,R_{k+1}]$. It follows that each element of $R_{k+1}$ is a zero of a monic polynomial of $R_k[X]$, with degree $2$ ({\it  loc.cit.}). We will need some lemmata. In view of \cite[Proposition 3.7(c)]{DPP2},  there is a preserving order bijection $[R, S]\to[R/(R:S) ,S/(R:S)]$ given by $T\mapsto T/(R:S)$.

\begin{proposition}\label{4.5} Let $(R,M)$ be a local ring such that $R/M$ is infinite,  and let $R\subset S$ be a finite, subintegral Gilmer extension.
 The following statements are equivalent:
\begin{enumerate}
\item $R\subset S$ has FIP;
\item $R\subset S$ is chained;
\item If $n>1$, then $[R,R_{n-1}] =  \{R_k \, | \, k=0,\ldots , n-1 \}$ and $|[R_{n-2},S]|\leq 4$. If $n=1$, then $|[R,S]|\leq 4$;
\end{enumerate}

If one of the previous statements holds, then $|[R,R_{n-1}]| = n$ and, when $n>1$,  each $R_k\subset R_{k+1}$ is a minimal ramified extension, for $k=0, \ldots, n-2$. Moreover, there exists $\alpha\in S$ such that $S=R_{n-1}[\alpha]$ with $\alpha^3\in MS$, and one of the three following situations is satisfied: either $R_{n-1}=S$, or $R_{n-1}\subset S$ is a minimal ramified extension, or there exists $T\in[R_{n-1},S]$ such that both $R_{n-1}\subset T$ and $T\subset S$ are minimal ramified extensions. 
\end{proposition}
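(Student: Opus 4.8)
The plan is to reduce to an Artinian local base and then treat separately the two layers of the tower $R\subseteq R_{n-1}=R+MS\subseteq S$: the ``bottom'' $R\subseteq R_{n-1}$ by a uniseriality argument, and the ``top'' $R_{n-1}\subseteq S$ by reduction to a field. First I would normalize. Since $R\subseteq S$ is a finite Gilmer extension it has FCP by Proposition~\ref{SUP}(3), and via the order-isomorphism $[R,S]\cong[R/(R:S),S/(R:S)]$ recalled above I may assume $(R:S)=0$, so that $R$ is Artinian local with $M^n=0\neq M^{n-1}$; subintegrality makes $S$ local with $S=R+N$, $N=\mathrm{Nil}(S)$. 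The implication $(2)\Rightarrow(1)$ is then immediate, because a chain in an FCP extension is finite, so $[R,S]$ is finite. The genuine content is to extract the full structure from $(1)$ and to recover $(1)$ (or $(2)$) from $(3)$.

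For the bottom tower I would record that $M\cdot R_{k+1}\subseteq R_k$, so each $R_{k+1}/R_k$ is an $(R/M)$-vector space, and that the inclusions are \emph{strict}: if $R_k=R_{k+1}$ then, with $P:=M^{n-k-1}S$, one gets $P\subseteq R+MP$, whence $P=(P\cap R)+MP$ and $P\subseteq R$ by Nakayama, forcing $P\subseteq(R:S)=0$ and contradicting $M^{n-1}\neq0$. Using the $\Delta_0$ structure recorded above, $[R_k,R_{k+1}]$ is the set of $(R/M)$-subspaces of $R_{k+1}/R_k$; since $R/M$ is infinite, $(1)$ forces $\dim(R_{k+1}/R_k)\leq1$, so together with strictness each $R_k\subset R_{k+1}$ is minimal, hence ramified by Theorem~\ref{CANMIN}. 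Thus $R_0\subset\cdots\subset R_{n-1}$ is a composition series of $R_{n-1}/R$, of length $n-1$. Now Proposition~\ref{4.3} applies ($R$ is infinite) and gives that $MS/M$ is a \emph{uniserial} $R$-module; as $MS\cap R=M$, we have $R_{n-1}/R\cong MS/M$, so its submodules are totally ordered and are exactly the $n$ terms of its unique composition series. Every $T\in[R,R_{n-1}]$ thus equals some $R_k$, giving $[R,R_{n-1}]=\{R_0,\dots,R_{n-1}\}$ and $|[R,R_{n-1}]|=n$.

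For the top I would pass to $\bar S:=S/MS$. Since $MS$ is an ideal of $S$ contained in $R_{n-1}$ and $M\bar S=0$, the ring $\bar S$ is a local finite $(R/M)$-algebra with residue field $R/M$, and the correspondence for a common ideal (as in Proposition~\ref{QUOT}) gives $[R_{n-1},S]\cong[R/M,\bar S]$. Writing $\bar S=(R/M)\oplus\bar N$ with $\bar N$ nilpotent, an intermediate subalgebra is $(R/M)\oplus V$ with $V\subseteq\bar N$ a subspace satisfying $V^2\subseteq V$. Because $R/M$ is infinite, FIP forces $\dim(\bar N/\bar N^2)\leq1$ (otherwise every line of a $2$-dimensional subspace of $\bar N/\bar N^2$ lifts to a subalgebra $\bar N^2+(\text{line})$, producing infinitely many), and then $\dim\bar N\leq2$ (if $\bar N$ were monogenic of length $\geq3$, a power $\bar N^{\,j}$ with $2j\geq\dim\bar N+1$ is a space of dimension $\geq2$ squaring to $0$, whose infinitely many lines are subalgebras). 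Hence $\bar S=(R/M)[\bar\alpha]$ with $\bar\alpha^3=0$, and $[R_{n-1},S]$ is a chain with at most three elements, realizing the three cases of the ``moreover''. Lifting $\bar\alpha$ to $\alpha\in S$ yields $S=R_{n-1}[\alpha]$ with $\alpha^3\in MS$ by Nakayama.

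The hard part is gluing the two chains, that is, showing $[R,S]$ is totally ordered and $|[R_{n-2},S]|\leq4$; a priori there may be \emph{skew} subalgebras $T$ comparable to neither $R_{n-1}$ nor the $R_k$. I would rule these out as follows: if $T\not\subseteq R_{n-1}$, then $T$ contains some $\beta$ with $\bar\beta\neq0$ in $\bar N$, so $\beta\equiv c_1\alpha+c_2\alpha^2\pmod{MS}$; since $M\subseteq R\subseteq T$, the products $M\beta$ and $M\beta^2$ lie in $T\cap MS$, and reading them through the uniserial module $MS/M$ should pin $T\cap R_{n-1}$ to a forced term $R_j$ of the canonical filtration, contradicting skewness and showing $T$ is comparable to $R_{n-1}$ and to each $R_k$. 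This gives $[R,S]=[R,R_{n-1}]\cup[R_{n-1},S]$, a chain, proving $(1)\Rightarrow(2)$ and $(1)\Rightarrow(3)$, with $|[R_{n-2},S]|\leq4$ coming from one minimal step $R_{n-2}\subset R_{n-1}$ followed by a top chain of length $\leq2$ (the case $n=1$ being exactly the field classification of the previous paragraph). For $(3)\Rightarrow(2)$ I would exploit that the bound $|[R_{n-2},S]|\leq4$ forces the finite interval $[R_{n-2},S]$, which already contains the chain $R_{n-2}\subset R_{n-1}\subset\cdots\subset S$, to coincide with it, and combine this with $[R,R_{n-1}]=\{R_k\}$ and the same congruence analysis to conclude that $[R,S]$ is chained, hence finite by FCP, recovering $(1)$.
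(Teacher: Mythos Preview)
Your reduction to $(R:S)=0$ and your analysis of the two layers $R\subseteq R_{n-1}$ and $R_{n-1}\subseteq S$ are essentially correct and recover most of the ``moreover'' clause. The genuine gap is the \emph{gluing step}, and it affects both $(1)\Rightarrow(2)$ and $(3)\Rightarrow(1)$.

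For $(1)\Rightarrow(2)$: knowing that $[R,R_{n-1}]$ and $[R_{n-1},S]$ are each chains does not by itself force $[R,S]$ to be a chain; you must exclude ``skew'' $T$ with $T\not\subseteq R_{n-1}$ and $R_{n-1}\not\subseteq T$. Your argument observes that $T\cap R_{n-1}$ is some $R_j$ (which is automatic from uniseriality and says nothing against skewness) and then asserts that the products $M\beta,M\beta^2$ ``should pin'' things down. But you never show $MS\subseteq T$, and no concrete mechanism is given; the step is a hope, not a proof. The paper does not attempt this combinatorial gluing at all: it invokes \cite[Proposition~4.13]{DPP3}, which for a subintegral FCP extension over an infinite local base gives $|[R,S]|=\ell[R,S]+1$, hence chained directly.

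For $(3)\Rightarrow(1)$: your argument is circular. You propose to reuse ``the same congruence analysis,'' but that analysis relied on the uniseriality of $MS/M$, which you obtained from Proposition~\ref{4.3} \emph{under hypothesis $(1)$}. From $(3)$ alone you only know that the \emph{subalgebras} between $R$ and $R_{n-1}$ are the $R_k$; since not every $R$-submodule $N$ with $M\subseteq N\subseteq MS$ satisfies $N^2\subseteq N$, this does not yield uniseriality of $MS/M$. Moreover, $|[R_{n-2},S]|\le4$ together with a chain of three or four elements inside does not force $[R_{n-2},S]$ itself to be a chain. The paper closes this direction with a genuine splitting lemma you are missing: \cite[Proposition~5.15]{DPP2} shows that $R\subset S$ has FIP if and only if \emph{both} $R\subseteq R_{n-1}$ and $R_{n-2}\subseteq S$ have FIP, and then \cite[Lemmas~5.12--5.14]{DPP2} identify these two conditions with the two clauses of $(3)$ (with the field case $n=1$ handled by \cite[Lemma~3.6(b)]{ADM}). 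Absent an independent proof of that splitting, your route from $(3)$ back to $(1)$ does not close.
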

\begin{proof} We know that $R\subset S$ has FCP by \cite[Theorem 4.2]{DPP2}. 
Then (1) $\Rightarrow$ (2) by \cite[Proposition 4.13]{DPP3}. Indeed, this Proposition says that $|[(R,S)]|=\ell[R,S]+1$, where $\ell[R,S]$ is the supremum of the lengths of any maximal chain of $R$-subalgebras of $S$. Now,  (2) $\Rightarrow$ (1) because $[R,S]$ is chained, so that $[R,S]$ is composed of the elements of the unique finite maximal chain going from $R$ to $S$. 

For the equivalence (1) $\Leftrightarrow$ (3), we may assume that $(R:S)=0$, in view of a preceding remark. We first assume that $M\neq 0$, so that $R$ is not a field. In view of \cite[Proposition 5.15]{DPP2}, (1) is equivalent to $R\subseteq R_{n-1}$ and $R_{n-2}\subseteq S$ have FIP. But $R\subseteq R_{n-1}$ has FIP if and only if $[R,R_{n-1}] =  \{R_k \, | \, k=0,\ldots , n-1 \}$ by \cite[Lemma 5.12]{DPP2} and $R_{n-2}\subseteq S$ has FIP if and only if $|[R_{n-2},S]|\leq 4$ by \cite[Lemma 5.14]{DPP2}, so that (1) $\Leftrightarrow$ (3).

Assume now that $M=0$, so that $R$ is a field. Then, $n=1$ and (1)   $\Leftrightarrow |[R,S]|\leq 4\Leftrightarrow$ (3) by the proof of \cite[Lemma 3.6 (b)]{ADM}. 

We come back to the general case.

If one of the previous statements holds, then $|[R,R_{n-1}]| = n$ and, when $n>1$, each $R_k\subset R_{k+1}$ is a minimal ramified extension, for $k=0, \ldots, n-2$ by the proof of (3). Moreover, \cite[Lemma 5.13]{DPP2} gives the last part of the statement, first in $S/(R:S)$, and then in $S$.
 \end{proof}

 \begin{lemma}\label{4.6}  Let $(R,M)$ be a   local ring such that $R/M$ is infinite and $R\subset S$  a finite subintegral Gilmer extension, whose nucleus is  not a field and has  $n$ as nilpotency  index. Let $M_i := M + M^{n-i}S$, then   if $R\subset R+MS$ has FIP, there exists some $x_i \in M_i \setminus M_{i-1}$ such that $ M_i = M+ Rx_i$,  for $i= 1,\ldots , n-1$.
\end{lemma}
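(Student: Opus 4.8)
The plan is to argue after the standard reduction to $(R:S)=0$, using the order-isomorphism $[R,S]\cong[R/(R:S),S/(R:S)]$ recalled before Proposition~\ref{4.5}; then $R$ is Artinian local with $M^n=0$ but $M^{n-1}\neq 0$, and each $M^{n-k}S$ is a nilpotent ideal of $S$ (since $M^nS=0$). The first structural point I would establish is that $M_k=M+M^{n-k}S$ is precisely the maximal ideal of $R_k=R+M^{n-k}S$: because $R\subseteq R_k$ is subintegral, $R_k$ is local with residue field $R/M$, and $M^{n-k}S$, being nilpotent, lies in the radical, forcing the maximal ideal to be $M+M^{n-k}S$. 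In particular $M_{i-1}\subseteq M_i$, and the contraction of $M_i$ to $R_{i-1}$ is the proper ideal $M_i\cap R_{i-1}=M_{i-1}$.

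Next I would feed in the hypothesis that $R\subseteq R+MS=R_{n-1}$ has FIP. By \cite[Lemma 5.12]{DPP2} (the input used in the proof of Proposition~\ref{4.5}) this forces the chain $R=R_0\subsetneq R_1\subsetneq\cdots\subsetneq R_{n-1}$ to exhaust $[R,R_{n-1}]$, so that each $R_{i-1}\subset R_i$ is a minimal extension; being a subextension of the subintegral extension $R\subset S$, it is ramified, with conductor the maximal ideal $M_{i-1}$ of $R_{i-1}$. Since $R_i=R+M^{n-i}S\neq R_{i-1}=R+M^{n-i+1}S$, the submodule $M^{n-i}S$ is not contained in $R_{i-1}$, so I may choose $x_i\in M^{n-i}S\setminus R_{i-1}$. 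Then $x_i\in M_i$ and $x_i\notin M_{i-1}$ (as $M_{i-1}\subseteq R_{i-1}$), giving $x_i\in M_i\setminus M_{i-1}$. Because $R_{i-1}\subset R_i$ is minimal, $x_i$ generates it as an algebra, and the $\Delta_0$ (degree-two) structure of such an extension, together with $M_{i-1}R_i\subseteq R_{i-1}$, yields the module identity $R_i=R_{i-1}+Rx_i$.

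With this I would compute $M_i$ in two moves. Writing a general $y\in M_i$ as $y=a+rx_i$ with $a\in R_{i-1}$, $r\in R$, the fact that $y,x_i\in M_i$ forces $a\in M_i\cap R_{i-1}=M_{i-1}$, whence $M_i=M_{i-1}+Rx_i$. Then the direct computation $M_{i-1}=M+M^{n-i+1}S=M+M(M+M^{n-i}S)=M+MM_i$ (using $M^2\subseteq M$ and $M^nS=0$) rewrites this as $M_i=(M+Rx_i)+MM_i$. Passing to the finitely generated $R$-module $\overline{M_i}:=M_i/(M+Rx_i)$ gives $\overline{M_i}=M\,\overline{M_i}$, and Nakayama's lemma ($R$ local with radical $M$) yields $\overline{M_i}=0$, that is, $M_i=M+Rx_i$.

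The steps requiring the most care are the identification of $M_k$ as the maximal ideal of $R_k$ together with the contraction formula $M_i\cap R_{i-1}=M_{i-1}$, and the verification that the FIP hypothesis genuinely delivers the \emph{strict} chain with minimal ramified steps (so that $R_i=R_{i-1}+Rx_i$ is available and $M_i\setminus M_{i-1}\neq\varnothing$); this is where distinctness of the $R_k$ is essential and where Lemma~5.12 of \cite{DPP2} does the real work. Once those structural facts are in place, the identity $M_{i-1}=M+MM_i$ followed by Nakayama is the conceptual heart of the argument, replacing any explicit bookkeeping of generators, and I expect that Nakayama reduction to be the cleanest way to pass from $M_i=M_{i-1}+Rx_i$ to the sharper $M_i=M+Rx_i$.
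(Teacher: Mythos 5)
Your proof is correct, and it shares the paper's overall skeleton: both deduce from the FIP hypothesis (via \cite[Lemma 5.12]{DPP2}, which the paper packages through Theorem~\ref{CANMIN} and Proposition~\ref{4.5}) that each step $R_{i-1}\subset R_i$ is a minimal ramified extension with conductor the maximal ideal $M_{i-1}$ of $R_{i-1}$; both reduce the problem to the identity $M_i=M_{i-1}+Rx_i$; and both finish with exactly the same computation $(M+Rx_i)+MM_i=M_{i-1}+Rx_i=M_i$ followed by Nakayama. Where you genuinely diverge is in how $M_i=M_{i-1}+Rx_i$ is obtained. The paper argues length-theoretically: minimality and ramification give $\mathrm{L}_{R_{i-1}}(M_i/M_{i-1})=1$ by Theorem~\ref{MIN}, and Northcott's length formula \cite[Theorem 13, p.168]{N} (using that the residual extension is trivial, since the extension is subintegral) transfers this to $\mathrm{L}_R(M_i/M_{i-1})=1$, so that $M_i=M_{i-1}+Rx_i$ for \emph{every} $x_i\in M_i\setminus M_{i-1}$. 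You instead pick a specific $x_i\in M^{n-i}S\setminus R_{i-1}$, use minimality together with the $\Delta_0$ (degree-two) structure recalled just before Proposition~\ref{4.5} to get $R_i=R_{i-1}+R_{i-1}x_i$, shrink this to $R_i=R_{i-1}+Rx_i$ via the conductor property $M_{i-1}R_i\subseteq R_{i-1}$ (since $R_{i-1}=R+M_{i-1}$), and then intersect with $M_i$ using $M_i\cap R_{i-1}=M_{i-1}$. Your route is more elementary, avoiding the length formula entirely, at the harmless cost of proving the statement only for one well-chosen $x_i$ rather than for all of $M_i\setminus M_{i-1}$; since the lemma only asserts existence, that is all that is needed. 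One small remark: the strictness $R_{i-1}\subsetneq R_i$, which you attribute to \cite[Lemma 5.12]{DPP2} (consistently with the count $|[R,R_{n-1}]|=n$ in Proposition~\ref{4.5}), in fact follows from the nilpotency index alone: if $M^{n-i}S\subseteq R_{i-1}=R+M^{n-i+1}S$, then multiplying repeatedly by $M$ gives $M^{n-i}S\subseteq R+M^nS=R$, whence $M^{n-i}S\subseteq(R:S)=0$ and $M^{n-i}=0$, contradicting that the index is $n$; so this step does not actually depend on the FIP hypothesis, though citing the lemma is equally valid.
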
 
\begin{proof}  We can apply Theorem~\ref{CANMIN} to $R\subset R+MS$ which states that $R_{i-1}\subset R_i$  is a minimal ramified extension, so that  the length of the $R_{i-1}$-module $ M_i/M_{i-1}$ is $1$  \cite[Theorem 3.3]{Pic}. We deduce from \cite[Theorem 13, p.168]{N}, by the Northcott   extension formula for length that the length over $R$ of $ M_i/M_{i-1}$ is $1$ and then $M_i =  M_{i-1} + Rx_i$ for any $x_i \in M_i\setminus M_{i-1}$. Set $N = M+ Rx_i$, then $N+ MM_i = M+Rx_i + M^2 + M^{n-i +1}S = (M+ M^{n-i+1} S) + Rx_i = M_{i-1} +Rx_i =  M_i$. Then the Nakayama Lemma shows that $N= M_i$.
\end{proof}

 \begin{lemma}\label{4.7} Let $(R,M)$ be a  local ring such that $R/M$ is infinite ring and $R\subset S$ a subintegral finite Gilmer extension.

 The following statements are equivalent:
 \begin{enumerate}
\item $MS/M$ is an uniserial $R$-module; 
\item $R\subseteq R+MS$ is a chained extension;
\item ${\mathrm L}_R(MS/M)=n-1$, where $n$ is the nilpotency index of $M/(R:~S)$;
\item $(R/I,M/I)$  is a SPIR, for $I:= (M :MS)= (R: (R+MS))$ and there exists some $x\in MS$ such that $MS=M+Rx$.
\end{enumerate}

If one of the above statement holds, then $I=((R:S):M))$.
\end{lemma}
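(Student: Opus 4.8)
The plan is to reduce to zero conductor and recast the four conditions as statements about the finitely generated module $E:=MS/M$ over the Artinian local ring $R$. By the order-isomorphism $[R,S]\to[R/(R:S),S/(R:S)]$ recalled before Proposition~\ref{4.5}, and since each of the data $MS/M$, the subextension $R\subseteq R+MS$, the integer $n$ and the ideal $I$ descends compatibly along $R\to R/(R:S)$, I may assume $(R:S)=0$; thus $R$ is Artinian local with $M^n=0\neq M^{n-1}$, and (the nucleus being assumed not a field) $n\geq 2$. Write $T:=R+MS$; this is a finite subintegral Gilmer subextension of $R\subseteq S$, local with maximal ideal $MS$ and residue field $R/M$. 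With $M_i:=M+M^{n-i}S$ one gets a filtration $M=M_0\subseteq M_1\subseteq\cdots\subseteq M_{n-1}=MS$ whose successive quotients are $R/M$-vector spaces (because $MM_i\subseteq M_{i-1}$); set $d_i:=\dim_{R/M}(M_i/M_{i-1})$. The first thing I would prove is that each $d_i\geq 1$: if $M^jS\subseteq M$ for some $1\leq j\leq n-1$ then $M^jS$ is an ideal of $S$ lying in $R$, hence in $(R:S)=0$, so $M^j=0$, contradicting $M^{n-1}\neq 0$; an easy iteration shows that $M_{i-1}=M_i$ would force exactly such an inclusion. I would also record that $\mathrm{Ann}_R(E)=(M:MS)=I$ and that $I=\{r\in R:rMS\subseteq R\}$, using $MS\cap R=M$.

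The core step is to identify the canonical filtration with the $M$-adic filtration of $E$. A direct computation gives $M^kE=M_{n-1-k}/M$ inside $E$, so the $M$-adic filtration of $E$ has graded pieces of dimensions $d_{n-1},\dots,d_1$ and $\mathrm{L}_R(E)=\sum_i d_i$, a chain of exactly $n-1$ strict steps by the paragraph above. From this I would deduce (1)$\Leftrightarrow$(3). If $E$ is uniserial then no graded piece can have dimension $\geq 2$ (a two-dimensional space over $R/M$ carries incomparable subspaces, which lift to incomparable submodules of $E$), so every $d_i=1$ and $\mathrm{L}_R(E)=n-1$. Conversely, if all $d_i=1$ then $E/ME$ is one-dimensional, so $E$ is cyclic by Nakayama, and for any submodule $W$ one takes the largest $k$ with $W\subseteq M^kE$; then $W$ surjects onto the one-dimensional $M^kE/M^{k+1}E$, whence $W+M^{k+1}E=M^kE$ and $W=M^kE$ by Nakayama. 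Thus the submodules of $E$ are exactly the $M^kE$, so $E$ is uniserial. Combined with $d_i\geq 1$, condition (3) is equivalent to ``all $d_i=1$'', giving (1)$\Leftrightarrow$(3).

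Next I would attach (2) and (4). For (1)$\Rightarrow$(2): every $B\in[R,T]$ equals $R+W$ with $W:=MS\cap B$ its maximal ideal, an $R$-submodule with $M\subseteq W\subseteq MS$ and $W^2\subseteq W$; since $B\mapsto W$ is an order-embedding into the (chained) set of submodules of $MS$, the extension $R\subseteq T$ is chained. For (2)$\Rightarrow$(3): being integral, finite and Gilmer, $R\subseteq T$ has FCP by Proposition~\ref{SUP}(3), so a chained $[R,T]$ is finite and $R\subseteq T$ has FIP; Lemma~\ref{4.6} then provides $M_i=M+Rx_i$, forcing each $d_i=1$, i.e. (3). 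For (1)$\Leftrightarrow$(4): when $E$ is uniserial it is cyclic, so $MS=M+Rx$ (the second half of (4)), and $E\cong R/I$ (via $\mathrm{Ann}_R(\bar x)=\{r:rx\in M\}=I$) is a uniserial local Artinian ring, that is a SPIR $(R/I,M/I)$ whose maximal ideal is principal because $\dim_{R/M}(M/I)/(M/I)^2\leq 1$; conversely, if $R/I$ is a SPIR and $MS=M+Rx$, then $E=R\bar x\cong R/I$ is uniserial since the ideals of a SPIR are totally ordered. This closes the cycle (1)$\Leftrightarrow$(2)$\Leftrightarrow$(3)$\Leftrightarrow$(4).

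For the final assertion I would unwind the colons: $((R:S):M)=\{r:rM\subseteq(R:S)\}=\{r:rMS\subseteq R\}=I$, the last equality being the identification noted at the end of the first paragraph. The main obstacle I anticipate is the passage between the ring-theoretic condition (2) and the module-theoretic conditions: it is exactly here that the hypothesis $R/M$ infinite is indispensable (so that a graded piece of dimension $\geq 2$ yields \emph{infinitely} many incomparable intermediate objects, killing both uniseriality and chainedness through FIP), and where one must invoke Lemma~\ref{4.6}, i.e. the fact that FIP of $R\subseteq R+MS$ forces the canonical tower $R=R_0\subset\cdots\subset R_{n-1}$ to exhaust $[R,R+MS]$. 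A secondary care point is that $R/I$ is a \emph{genuine} SPIR (not a field) precisely when its length $n-1$ exceeds $1$; this is where the running assumption that the nucleus is not a field enters condition (4).
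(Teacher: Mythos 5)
Your argument is correct in its core and takes a partly different, more self-contained route than the paper. The paper proves (1)$\Rightarrow$(2) by the same order-embedding $B\mapsto MS\cap B$ of $[R,R+MS]$ into the set of $R$-modules between $M$ and $MS$ that you use, but it then obtains (2)$\Leftrightarrow$(3) by citing an external lemma from another of the authors' papers, and it reserves Lemma~\ref{4.6} for (2)$\Rightarrow$(4), where it constructs the surjection $\psi\colon R\to MS/M$, $a\mapsto a\overline{x_{n-1}}$, identifies $MS/M\cong R/I$, and shows $M/I$ is principal. You instead identify the filtration $M_i=M+M^{n-i}S$ with the $M$-adic filtration of $E:=MS/M$ (your computation $M^kE=M_{n-1-k}/M$ is correct and is not made explicit in the paper), prove strictness $d_i\geq 1$ from $(R:S)=0$, deduce (1)$\Leftrightarrow$(3) by a purely module-theoretic argument, redeploy Lemma~\ref{4.6} to get (2)$\Rightarrow$(3), and derive (1)$\Leftrightarrow$(4) from cyclicity of a uniserial module via Nakayama. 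This removes the dependence on the external reference, and your colon computation $((R:S):M)=\{r\in R\mid rMS\subseteq R\}=I$ for the final assertion is cleaner than the paper's (it also shows this identity holds unconditionally, not only when (1)--(4) hold).

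Two points need attention. First, a genuine (if small) gap: you run the whole proof under the assumption that the nucleus is not a field, but Lemma~\ref{4.7} has no such hypothesis --- that assumption belongs to Lemma~\ref{4.6}. The paper first disposes of the case $M=(R:S)$ (equivalently, the nucleus is a field), observing that then $MS=M$ and all four conditions are trivially satisfied, and only afterwards assumes $M\neq(R:S)$; your proof must do the same, both to cover the full statement and to legitimize your appeal to Lemma~\ref{4.6}. Second, the SPIR issue you flag at the end is real, but your proposed resolution is off by one: ``nucleus not a field'' gives $n\geq 2$, while $R/I$ fails to be a SPIR in the paper's strict sense (which excludes fields) exactly when $n=2$, since then $\mathrm{L}_R(R/I)=n-1=1$. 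Indeed, with $K$ an infinite field, $R=K[t]/(t^2)$ and $S=K[t,y]/(t^2,\,y^3,\,y^2-ty)$, one checks $(R:S)=0$, $n=2$, and $MS/M\cong K$ is simple, so (1)--(3) hold, yet $I=M$ and $R/I=K$ is a field. To be fair, the paper's own proof makes the identical leap (concluding ``SPIR'' from ``local Artinian with principal maximal ideal'' without excluding the field case), so this is a defect of the statement as written rather than something your argument introduces; but your claim that the running assumption repairs it is not correct, and (1)$\Rightarrow$(4) genuinely needs $n>2$ (or a reading of ``SPIR'' that admits fields) to hold.
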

  \begin{proof} As in a previous proof, we may first assume that $(R:S)=0$, since $(R:S)\subseteq I\subseteq M\subseteq MS$. We also may assume that $M\neq (R:S)$. Indeed, if $M= (R:S)$, the conditions (1), (2), (3) and (4) are trivially satisfied. In particular, this holds when $R$ is a field.

  Assume that $MS/M$ is an uniserial $R$-module and let $T\in [R, R+MS]$.  Then $(T,N)$ is a local Artinian ring and $R\subseteq T$ is subintegral. It follows that  $T/N\cong R/M$ and $T= R+N$ with $R\cap N = M$, and $N\in \llbracket M,MS\rrbracket$ where $\llbracket M, MS\rrbracket $ denotes the set of $R$-modules between $M$ and $MS$. Let $\varphi : [R,R+MS] \to \llbracket M,MS\rrbracket$ be the map defined by $T\mapsto N$. Then $\varphi$ is an injective map which is order preserving. It follows that $[R,R+MS]$ is a chain.
  
 We have (2) $\Leftrightarrow$(3) in view of \cite[Lemma 4.1]{Pic 6} since $R$ is not a field. 

Assume that $R\subset R+MS$ is a chained extension, and then FIP. We will denote by $\bar x$ a typical element  of $N:= MS/M\cong M(S/R)$. First observe that $I$ is the annihilator of the $R$-module $N =M_{n-1}/M = R\overline{x_{n-1}}$.  The map $\psi : R \to N$ defined by $a\mapsto a\overline{x_{n-1}}$ is  a surjective morphism of $R$-modules. Then $I =\ker \psi$ is an ideal of $R$ and $N\cong R/I$ as an $R$-module. We denote by $\overline{\psi}$ the induced isomorphism $R/\ker\psi \to N$. But by Lemma~\ref{4.6}, $M+ M^2S = M + Rx_{n-2}$, so that $(M+M^2S)/ M \cong R\overline{x_{n-2}}= MN=M \psi(R) =\psi(M) = \overline{\psi} (M/I)$. We deduce that $M/I = \overline {\psi}^{-1}(R\overline{x_{n-2}})=(R/I)\overline{\psi}^{-1}(\overline{x_{n-2}})$ is monogenic as an $R/I$-module, whence is a principal ideal of $R/I$. Since $R$ is local Artinian, we get that $R/I$ is a SPIR. And there exists some $x=x_{n-1}\in MS$ such that $MS=M+Rx$ by Lemma~\ref{4.6}.

 Conversely, assume that  $(R/I,M/I)$  is a SPIR, for $I:= (M :MS)= (R: (R+MS))$ and there exists some $x\in MS$ such that $MS=M+Rx$. The preceding proof shows that we have an isomorphism of $R$-modules $R/I\cong MS/M$. Since $R/I$ is a SPIR, its ideals are linearly ordered. As they are also its $R$-submodules, the $R$-submodules of $MS/M$ are linearly ordered. 
  
 Assume that  one of the above statement holds. Since $MS$ is the maximal ideal of $R+SM$, we get that $IMS\subseteq R$, so that $IM\subseteq (R:S)=0$, giving $IM=0$, and $I\subseteq (0:M)$.  But, $(0:M)MS=0$ gives $(0:M)=I$.
 
 Coming back to the general case, where $(R:S)\neq 0$, we obtain $I=((R:S):M)$. 
 \end{proof}
 
 \begin{example}\label{4.8} We are going to show that the two conditions of (4) of Lemma~\ref{4.7} are necessary. 
 Let $K$ be an infinite field. Set $R:=K[T]/(T^3)$ and $S:=K[T,Y]/(T^3,Y^3,T^2Y^2)$. Let $t$ (resp. $y$) be the class of $T$ (resp. $Y$) in $R$ (resp. $S$). Then,  $(R,M)$ is an infinite  SPIR, where $M=Rt$, and $R\subset S$ a subintegral finite extension, such that $R/(R:S)$ is an Artinian ring. In fact, $(R:S)=0$ and $M^3=0,\ M^2\neq 0$, so that the nilpotency index of $M$ is 3. Using notations given before Proposition~\ref{4.5} and in Lemma~\ref{4.6}, we get $M_2:=MS=St$, and $M_1:=M+M^2S=Rt+St^2$. We see easily that  $\{1,t,t^2\}$ is a basis of the $K$-vector space $R$ and  $\{1,t,t^2,y,ty,t^2y,y^2,ty^2\}$ is a basis of the $K$-vector space $S$. This gives that $M_2$ is a $K$-vector space with basis $\{t,t^2,ty,t^2y,ty^2\}$ and   $M_1$ is a $K$-vector space with basis $\{t,t^2,t^2y\}$, giving $\dim_K(M_2)=5$ and $\dim_K(M_1)=3$, so that $\dim_K(M_2/{M_1})=2={\mathrm L}_{R/M}(M_2/{M_1})={\mathrm L}_R(M_2/{M_1})$, since $MM_2\subseteq M_1$. It follows that $R_1\subseteq R_2$ is not a minimal ramified extension (\cite[Lemma 5.12]{DPP2}), and $R\subset R+MS$ is not a chained extension, because it has not FIP. So, the condition that $(R/I,M/I)$  is a SPIR is not sufficient  to guarantee  that $R\subset R+MS$ be  chained.
\end{example}
   
   \begin{theorem}\label{4.9} Let $(R,M)$ be a local ring such that $R/M$ is infinite and let $R\subset S$ be a finite and subintegral Gilmer extension.
     The following statements are equivalent:
\begin{enumerate}
\item $R\subset S$ has FIP;
\item $R\subset S$ is chained;
\item If $n>1$, then $[R,R_{n-1}] =  \{R_k \, | \, k=0,\ldots , n-1 \}$ and $|[R_{n-2},S]|\leq 4$. If $n=1$, then $|[R,S]|\leq 4$;

\item If $n>1$, then $MS/M$ is an uniserial $R$-module and $R_{n-2} \subset S$ is chained. \end{enumerate}
\end{theorem}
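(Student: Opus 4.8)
The plan is to prove Theorem~\ref{4.9} by reducing it to the already-established Proposition~\ref{4.5} and Lemma~\ref{4.7}. The equivalences (1)~$\Leftrightarrow$~(2)~$\Leftrightarrow$~(3) are, in fact, \emph{verbatim} the content of Proposition~\ref{4.5}: under the standing hypotheses ($(R,M)$ local with $R/M$ infinite, and $R\subset S$ finite subintegral Gilmer), Proposition~\ref{4.5} already asserts that $R\subset S$ has FIP iff it is chained iff the condition (3) on $[R,R_{n-1}]$ and $|[R_{n-2},S]|$ holds. So the only genuinely new work is to fold in the fourth characterization (4), and the natural strategy is to show (3)~$\Leftrightarrow$~(4) (for $n>1$), after which the full cycle closes automatically.

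\medskip

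\noindent\textbf{Reducing (4) to the vocabulary of (3).} First I would dispose of the degenerate case: when $n=1$ the maximal ideal of the nucleus is already trivial (more precisely $M=(R:S)$), every condition (1)--(3) collapses to $|[R,S]|\le 4$, and statement (4) is vacuous because it is phrased only for $n>1$; so we may assume $n>1$ and, by the order-bijection $[R,S]\to[R/(R:S),S/(R:S)]$ recalled before Proposition~\ref{4.5}, reduce to $(R:S)=0$ with $(R,M)$ local Artinian and $M\neq 0$. The heart of the matter is to match up the two halves of (3) with the two halves of (4). For the first half: the condition $[R,R_{n-1}]=\{R_k\mid k=0,\dots,n-1\}$ says exactly that $R\subseteq R+MS$ is chained (indeed $R_{n-1}=R+MS$), and by Lemma~\ref{4.7} this is equivalent to ``$MS/M$ is a uniserial $R$-module''. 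That is precisely the first clause of (4). So the plan is to invoke Lemma~\ref{4.7}, specifically the equivalence (1)~$\Leftrightarrow$~(2) there, to convert the combinatorial description of $[R,R_{n-1}]$ into the uniseriality statement.

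\medskip

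\noindent\textbf{Matching the second halves.} For the remaining clause, (3) demands $|[R_{n-2},S]|\le 4$ while (4) demands that $R_{n-2}\subset S$ be chained; I would argue these are equivalent \emph{given} that the first clauses hold. The extension $R_{n-2}\subset S$ sits above the uniserial/chained tower, and by the structure theory in Proposition~\ref{4.5}—which shows that under FIP one has $S=R_{n-1}[\alpha]$ with at most two minimal ramified steps from $R_{n-1}$ to $S$, and each $R_k\subset R_{k+1}$ minimal ramified—the interval $[R_{n-2},S]$ is built from a bounded number of ramified minimal extensions. A chained extension with few minimal steps has bounded cardinality of intermediate algebras; conversely, once $MS/M$ is uniserial the only obstruction to chainedness of $R_{n-2}\subset S$ is having ``too many'' intermediate rings, so $|[R_{n-2},S]|\le 4$ forces the chain structure. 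Concretely I expect to cite \cite[Lemma 5.14]{DPP2} (the source of the bound $4$ in Proposition~\ref{4.5}) together with the fact that FIP~$\Leftrightarrow$~chained for the whole extension, to get $R_{n-2}\subset S$ chained iff $|[R_{n-2},S]|\le 4$.

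\medskip

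\noindent\textbf{Closing the loop and the main obstacle.} Once (3)~$\Leftrightarrow$~(4) is in hand, the equivalence with (1) and (2) is immediate from Proposition~\ref{4.5}, completing the cycle. The step I expect to be the real obstacle is the clean separation in (4): showing that the global chainedness/FIP of $R\subset S$ decomposes as ``$MS/M$ uniserial'' \emph{plus} ``$R_{n-2}\subset S$ chained'', i.e.\ that these two local conditions, concerning disjoint pieces of the tower, are jointly equivalent to the single condition on $[R,R_{n-1}]$ and $[R_{n-2},S]$. This requires knowing that the behavior of the bottom part $R\subseteq R_{n-1}$ (governed by uniseriality via Lemma~\ref{4.7}) and of the top part $R_{n-2}\subseteq S$ genuinely do not interfere, which ultimately rests on the gluing result \cite[Proposition 5.15]{DPP2} already used inside Proposition~\ref{4.5}. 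Verifying that the overlap at $R_{n-2}$ is handled consistently—so that uniseriality of $MS/M$ plus chainedness of $R_{n-2}\subset S$ really reproduces condition (3) without double-counting or gap—is where I would spend the most care.
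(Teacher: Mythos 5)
Your proposal is correct and takes essentially the same route as the paper, whose entire proof consists of citing Proposition~\ref{4.5} for (1) $\Leftrightarrow$ (2) $\Leftrightarrow$ (3) and then deducing the equivalence with (4), for $n>1$, from Lemma~\ref{4.7} together with \cite[Proposition 5.15]{DPP2} --- the same ingredients you use, the only cosmetic difference being that the paper links (4) to (2) while you link it to (3). One small repair to your second clause: the equivalence ``$R_{n-2}\subset S$ chained $\Leftrightarrow |[R_{n-2},S]|\leq 4$'' should come from applying the FIP $\Leftrightarrow$ chained equivalence of Proposition~\ref{4.5} to the subextension $R_{n-2}\subset S$ itself (legitimate, since $R_{n-2}$ is local with infinite residue field isomorphic to $R/M$, and $R_{n-2}\subset S$ is again finite, subintegral and Gilmer), together with \cite[Lemma 5.14]{DPP2}, rather than to ``the whole extension'' as written.
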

\begin{proof} We have already proved that (1) $\Leftrightarrow$ (2) $\Leftrightarrow$ (3).
Now (2) $\Leftrightarrow$ (4) by Lemma~\ref{4.7} and  \cite[Proposition 5.15]{DPP2} when $n>1$. 
\end{proof}

We note here that an integral Gilmer extension $R \subseteq S$ is spectrally injective if and only if  $ \substack{+\\ S}R= \substack{t\\ S}R$ \cite[Theorem 3.11]{Pic 1}. Moreover, if $R$ is local and $R/(R:S)$ is Artinian, then $R\subseteq S$ is spectrally injective if and only if $S$ is a local ring. Indeed, $\mathrm{D}_S((R:S)) \to \mathrm{D}_R((R:S))$ is a bijective map.

\begin{proposition}\label{4.10} Let  $R\subset S$ be a (finite)  local \'etale  Gilmer extension,  that is not seminormal and whose nucleus is an infinite Artinian ring.
Then $R\subset S$ has FIP if and only if $R\subset  \substack{+\\ S}R$ is chained.
\end{proposition}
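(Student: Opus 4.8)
The plan is to exploit the canonical decomposition $R\subseteq \substack{+\\ S}R\subseteq \substack{t\\ S}R \subseteq S$ together with Proposition~\ref{SUP}(4), which reduces the FIP property of $R\subset S$ to the FIP property of its three factors. I would then show that the top two factors \emph{always} have FIP under the present hypotheses, so that the entire burden falls on the subintegral factor $R\subseteq \substack{+\\ S}R$, to which Theorem~\ref{4.9} applies verbatim.

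First I would record the elementary reductions. Since $R$ is local with maximal ideal $M$ and $R\neq S$, we have $M\in\mathrm{Supp}(S/R)$, and the nucleus $R/(R:S)$ is a local Artinian ring which is assumed infinite; hence its residue field, which is also $R/M$, is infinite by the characterization recalled before Theorem~\ref{FMIR}. Moreover, as subextensions of the finite Gilmer extension $R\subset S$, each of the three factors is again a finite Gilmer extension (by the remark following Definition~\ref{defG}).

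Next I would dispose of the two upper factors. By construction $\substack{t\\ S}R\subseteq S$ is $t$-closed, and by Proposition~\ref{DUN} (applied with $T=\substack{t\\ S}R\in[R,S]$, recalling that \'etale implies unramified) it is unramified; being also finite and Gilmer, it has FIP by Proposition~\ref{2.3}. For the middle factor $\substack{+\\ S}R\subseteq \substack{t\\ S}R$ I would observe that it is seminormal and infra-integral by the very definitions of the seminormalization and the $t$-closure, and finite Gilmer; the seminormal infra-integral result of Section~5 (Proposition~\ref{5.5}, quoted in the introduction) then gives that it is unramified and has FIP. Thus both upper factors have FIP unconditionally.

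Finally, the subintegral factor $R\subseteq \substack{+\\ S}R$ is finite, subintegral and Gilmer, with $(R,M)$ local and $R/M$ infinite, so Theorem~\ref{4.9} yields that it has FIP if and only if it is chained. Combining this with Proposition~\ref{SUP}(4) gives that $R\subset S$ has FIP if and only if $R\subseteq \substack{+\\ S}R$ has FIP, if and only if $R\subseteq \substack{+\\ S}R$ is chained; the non-seminormality hypothesis guarantees $R\neq \substack{+\\ S}R$, so this last extension is proper and the statement is non-vacuous. The only delicate points are the hypothesis checks for the two upper factors --- in particular the unramifiedness of $\substack{t\\ S}R\subseteq S$ obtained through Proposition~\ref{DUN}, and the transfer of the ``infinite residue field'' condition from the nucleus to the base of the subintegral factor --- after which the argument is purely formal.
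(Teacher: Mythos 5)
Your proof is correct, and its skeleton is the same as the paper's: reduce along the canonical decomposition, handle the $t$-closed part $\substack{t\\ S}R\subseteq S$ by Proposition~\ref{DUN} together with Proposition~\ref{2.3}, transfer ``infinite nucleus'' into ``$R/M$ infinite'' via the recalled fact that a local Artinian ring is infinite exactly when its residue field is, and conclude for the subintegral part by Theorem~\ref{4.9}, the non-seminormality ensuring $R\neq\substack{+\\ S}R$. The genuine difference lies in the middle factor $\substack{+\\ S}R\subseteq\substack{t\\ S}R$. The paper never has to confront it: it uses the locality hypothesis on $S$, which by the remark immediately preceding the proposition makes $R\subset S$ spectrally injective, so that $\substack{+\\ S}R=\substack{t\\ S}R$; the decomposition then collapses to two factors $R\subset\Sigma\subseteq S$, combined by \cite[Theorem 5.9]{DPP2}. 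You instead keep all three factors, combine them with Proposition~\ref{SUP}(4), and dispose of the middle one with Proposition~\ref{5.5}; this forward reference is harmless, since Proposition~\ref{5.5} is proved independently of Section 4 and the paper itself invokes it just after this proposition for ``the lacking parts of the canonical decomposition''. The trade-off: your route never uses that $S$ is local, only that $R$ is, so it actually proves a slightly more general statement; the paper's route is shorter and makes visible the structural fact that, under its hypotheses, the seminormalization and the $t$-closure coincide --- information your argument does not recover (and which is why the paper also checks that $(R:\Sigma)\neq M$, a point your appeal to Theorem~\ref{4.9} absorbs silently, that theorem covering the case where the nucleus of the subintegral part is a field).
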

\begin{proof}  We denote by $M$ the maximal ideal of $R$. We know that $R\subset S$ is finite and Gilmer, then integral and FCP. Denote by $\Sigma$ the seminormalization of $R$ in $S$. Then  $R\subset \Sigma $ is subintegral and also finite. Because of the spectral injectivity, $\Sigma$ is also the $t$-closure of $R\subset S$. Moreover $R/(R:\Sigma)$ is Artinian infinite and not a field. 
Indeed,  in case    $(R:\Sigma) = M$, we get that $M$ is a  maximal ideal of $\Sigma$, and then $R/M = \Sigma/M$ implies that $R=\Sigma$, an absurdity. Now $\Sigma/(\Sigma: S)$ is local and Artinian. It follows that the $t$-closed and unramified finite extension $\Sigma \subseteq S$ has FIP, thanks to Proposition~\ref{2.3}. Since $R\subseteq S$ has FIP if and only if $R\subset \Sigma$ and $\Sigma \subseteq S$ have FIP \cite[Theorem 5.9]{DPP2}, $R\subset S$ has FIP if and only if $R\subset \Sigma$ has FIP. To conclude, use Theorem~\ref{4.9}.
\end{proof}

The lacking parts of the canonical decomposition, when the base ring is local, are particular cases of results of Section 5, namely Proposition~\ref{5.5} and Corollary~\ref{5.6}.

\section{\'etale and seminormal extensions}

Lemma~\ref{2.1} shows that the seminormality of $R \subseteq R^n$ is equivalent to the  reduction of $R$. We are thus lead to examine \'etale extensions that are seminormal and what happens when $R\subseteq S$ is seminormal, a case we excluded in 
Proposition~\ref{4.10}.

 \begin{proposition}\label{5.2} Let $R\subseteq S$ be a 
   finite extension, whose nucleus  is zero-dimensional. Then $R\subseteq S$ is seminormal if and only if  
 $S/(R:S)$ is absolutely flat if and only if $S/(R:S)$ is reduced.
 \end{proposition}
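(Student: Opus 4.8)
The plan is to establish the two equivalences separately, running the cycle as (seminormal) $\Rightarrow$ ($S/(R:S)$ reduced) $\Rightarrow$ (seminormal), together with the soft equivalence (reduced) $\Leftrightarrow$ (absolutely flat). Write $I:=(R:S)$. The first thing I would record is that, because $R\subseteq S$ is finite, $S$ is integral over $R$, so $S/I$ is integral over the nucleus $R/I$; as the latter is zero-dimensional by hypothesis, $S/I$ is zero-dimensional as well. Since a commutative ring is absolutely flat exactly when it is reduced and zero-dimensional, this at once gives the equivalence of ``$S/I$ absolutely flat'' and ``$S/I$ reduced''. Hence only the seminormality equivalence needs genuine work.

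For the direction (seminormal) $\Rightarrow$ ($S/I$ reduced) I would simply invoke the recalled fact that if $R\subseteq S$ is seminormal then its conductor $I$ is a radical ideal of $S$; radicality of $I$ in $S$ is precisely the reducedness of $S/I$.

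The substantive direction is ($S/I$ reduced) $\Rightarrow$ (seminormal), and the idea is to localize and then kill the conductor. Since seminormalization commutes with localization, seminormality can be tested on the fibers $R_M\subseteq S_M$ for $M\in\mathrm{Max}(R)$; and since reducedness is local while the conductor of a finite extension commutes with localization, I may assume $R$ is local. Passing to $\bar R:=R/I$ and $\bar S:=S/I$, I obtain a finite extension with zero conductor, with $\bar R$ local and zero-dimensional, and with $\bar S$ reduced by assumption. Because $\bar R$ embeds in the reduced ring $\bar S$, it is itself reduced, and a local reduced zero-dimensional ring is a field; so $\bar R=:k$ is a field. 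Seminormality is then elementary: given $b\in\bar S$ with $b^2,b^3\in k$, either $b^2=0$, whence $b=0\in k$ by reducedness of $\bar S$, or $b^2$ is a nonzero element of the field $k$, hence invertible, and $b=b^3(b^2)^{-1}\in k$. Thus $\bar R\subseteq\bar S$ is seminormal; lifting, any $b\in S$ with $b^2,b^3\in R$ has image $\bar b\in\bar R$, i.e. $b\in R+I=R$, so $R\subseteq S$ is seminormal.

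The hard part will be the bookkeeping of the two reductions — first localize at a maximal ideal of $R$, then quotient by the conductor — and verifying their compatibility: that the nucleus stays local and zero-dimensional after localizing, that $(R_M:S_M)=I_M$ with $S_M/I_M=(S/I)_M$ still reduced, and that seminormality genuinely descends through the localization step via the cited commutation with localization. Once these are in place, the decisive structural point — that the localized, conductor-quotiented base ring collapses to a \emph{field} — renders the seminormality check trivial. I would also double-check that the recalled implication ``seminormal $\Rightarrow$ radical conductor'' is available in the present finite (not necessarily FIP) setting, since that is what powers the easy direction.
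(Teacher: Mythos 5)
Your proof is correct, and although it shares the paper's overall skeleton (localize at maximal ideals, pass to the conductor quotient, observe that the base ring collapses to a field), both directions are powered by different mechanisms than the paper's. For (seminormal $\Rightarrow$ absolutely flat/reduced), the paper works \emph{after} the reduction to $(R:S)=0$ with $R$ local: the conductor, being radical and zero, is the intersection of the finitely many maximal ideals of the zero-dimensional semi-local ring $S$, and the Chinese Remainder Theorem exhibits $S$ as a finite product of fields, giving absolute flatness explicitly; you instead get reducedness of $S/(R:S)$ globally in one line from the radical-conductor property, and recover absolute flatness from the abstract equivalence ``absolutely flat $=$ reduced $+$ zero-dimensional'', transferring zero-dimensionality from the nucleus to $S/(R:S)$ by integrality. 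For the converse, the paper, after noting that $R$ is a field, concludes by citing \cite[Lemma 4.8]{DPP2} (semiprimeness of the conductors $(R:T)$ in $T$ for $T\in[R,S]$ implies seminormality); your two-case check of the definition over a field (if $b^2=0$ then $b=0$ by reducedness; if $b^2\neq 0$ it is invertible and $b=b^3(b^2)^{-1}$) replaces that citation by an elementary, self-contained argument, which is a genuine simplification. Your residual worry about the easy direction is unfounded: the implication ``seminormal $\Rightarrow$ $(R:S)$ radical in $S$'' is recalled in Section 1 of the paper with no finiteness or FIP hypothesis, and holds in full generality (if $b^2\in(R:S)$, then for every $s\in S$ both $(bs)^2$ and $(bs)^3$ lie in $R$, so $bs\in R$, i.e.\ $b\in(R:S)$; downward induction handles higher powers). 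The only bookkeeping point to make explicit is the case of a maximal ideal $M$ not containing $(R:S)$: there $R_M=S_M$, the extension is trivially seminormal, and your phrase ``the nucleus stays local'' does not literally apply --- but this degenerate case is glossed over by the paper's reduction as well.
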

 \begin{proof} 
   
 We can assume that $(R:S)=0$ and that $R$ is local with maximal ideal $M$. Indeed, seminormality and absolute flatness commute with localization. Moreover, $S/(R:S)$ is reduced if and only if it is locally reduced.  Then, $(R,M)$ is a zero-dimensional local ring and $S$ is a zero-dimensional semi-local ring. 
 
 Suppose that $R\subseteq S$ is seminormal. We get that $(R:S)=0=N_1 \cap \cdots \cap N_n = M$ where $\{N_1, \ldots,N_n\}$ is the set of all maximal ideals of $S$. In fact, $R\subseteq S$ is finite, so that there are finitely many maximal ideals in $S$ ($\mathrm{Max}(S)=\mathrm{Spec}(S)$ since $S$ is zero-dimensional). Then, $S\cong \prod_{i=1}^nS/N_i$ is a product of finitely many fields. It follows that $S$ is absolutely flat and reduced. 
 
 To show the converse, assume that $S$ is absolutely flat.  It is enough to observe that $R$ is a field since a reduced local zero-dimensional ring, because $S$ is a reduced zero-dimensional ring. Then, $0=(R:S)$ is semiprime in $S$, and   $0=(R:T)$ is semiprime in $T$ for each $T\in[R,S]$. It follows that $R\subseteq S$ is seminormal \cite[Lemma 4.8]{DPP2}. In particular, this holds if $S$ is reduced. 
  \end{proof}
     
   The next result generalizes 
    Proposition~\ref{5.2}
   
 \begin{proposition}\label{5.4} Let $ R \subseteq  S$ be an \'etale ring extension such that $R$ is Artinian. Then  $S/(R:S)$ is reduced if and only if $R\subseteq S$ is  seminormal and in this case $R\subseteq S$ has FIP.
Moreover, $|[R,S]|\leq\prod_{M\in\mathrm{MSupp}(S/R)}B_{n_M} $, where $n_M=\mathrm{rk}_{R/M}(S/MS)$.
 \end{proposition}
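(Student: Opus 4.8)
The plan is to dispatch the three assertions of the proposition in turn, exploiting that the Artinian hypothesis makes $R\subseteq S$ a Gilmer extension: its nucleus $R/(R:S)$ is a quotient of the Artinian ring $R$, hence Artinian. Consequently Proposition~\ref{GUF} already gives that $R\subseteq S$ is finite and projective and that $S$ is Artinian, which is the standing structure I would use throughout.

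For the first equivalence I would simply invoke Proposition~\ref{5.2}. Since the nucleus $R/(R:S)$ is Artinian, hence zero-dimensional, that proposition applies verbatim to the finite extension $R\subseteq S$ and yields that $R\subseteq S$ is seminormal if and only if $S/(R:S)$ is reduced (equivalently, absolutely flat). To obtain FIP in this situation, I would observe that if $S/(R:S)$ is reduced then so is its subring, the nucleus $R/(R:S)$; as $R\subseteq S$ is unramified (being \'etale) and Gilmer with reduced nucleus, Theorem~\ref{2.7} applies directly and gives that $R\subseteq S$ is finite and has FIP.

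It remains to establish the cardinality bound, where the real work lies. Using the order-isomorphism $[R,S]\cong[R/(R:S),S/(R:S)]$ recalled before Proposition~\ref{4.5}, I would first reduce to the case $(R:S)=0$; here $R$ and $S$ are reduced Artinian rings, hence finite products of fields, and for $M\in\mathrm{MSupp}(S/R)$ the integer $n_M=\mathrm{rk}_{R/M}(S/MS)$ is unaffected by this reduction, because $(R:S)\subseteq M\subseteq MS$ forces $S/MS$ to be computed identically before and after. Writing $R=\prod_{M\in\mathrm{Max}(R)}\kappa(M)$ and correspondingly $S=\prod_M S_M$ with $\kappa(M)=R_M\subseteq S_M$ \'etale, the idempotents of $R$ split every intermediate algebra, giving $[R,S]\cong\prod_M[\kappa(M),S_M]$; since $R_M=S_M$ for $M\notin\mathrm{MSupp}(S/R)$, only the supported maximal ideals contribute, so $|[R,S]|=\prod_{M\in\mathrm{MSupp}(S/R)}|[\kappa(M),S_M]|$.

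The crux is then the single-factor estimate $|[\kappa(M),S_M]|\leq B_{n_M}$ for the \'etale algebra $\kappa(M)\subseteq S_M$ of dimension $n_M$ over the field $\kappa(M)$. I would prove this by base change to a splitting field $\Omega\supseteq\kappa(M)$, for which $S_M\otimes_{\kappa(M)}\Omega\cong\Omega^{n_M}$. Because $\kappa(M)\to\Omega$ is faithfully flat, the assignment $T\mapsto T\otimes_{\kappa(M)}\Omega$ defines an injection $[\kappa(M),S_M]\hookrightarrow[\Omega,\Omega^{n_M}]$, faithful flatness providing $T=(T\otimes_{\kappa(M)}\Omega)\cap S_M$ and hence injectivity; combined with the count $|[\Omega,\Omega^{n_M}]|=B_{n_M}$ of Proposition~\ref{1.5}, this gives the factorwise bound, and multiplying over $M$ yields the asserted inequality. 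The main obstacle is precisely this last step: controlling the intermediate algebras of a possibly non-split \'etale algebra, which forces the passage to a splitting field together with the faithfully flat descent argument; the remaining ingredients are direct applications of the results already assembled in Sections~2 and~3.
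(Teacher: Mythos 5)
Your proposal is correct and follows essentially the same route as the paper: the equivalence comes from Proposition~\ref{5.2}, FIP comes down to the \'etale-over-a-field case (Theorem~\ref{ET}, which you reach via Theorem~\ref{2.7} rather than by localizing directly), and the bound is obtained exactly as in the paper by base change to a splitting field, the faithfully flat descent injection $[K,S]\hookrightarrow[\Omega,\Omega^{n}]$, and the Bell-number count of Proposition~\ref{1.5}. The only differences are presentational: you globalize by splitting off idempotents of the reduced Artinian base instead of invoking the localization results (\cite[Theorem 3.6]{DPP2}, Proposition~\ref{SUP}(2)), and you prove the descent injectivity $T=(T\otimes\Omega)\cap S$ directly where the paper cites \cite[Lemma 2.1]{DPP3}.
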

\begin{proof}  Observe that $R\subseteq S$ is finite  by Corollary~\ref{2.5}. Then $R\subseteq S$ is seminormal  if and only if $S/(R:S)$ is reduced by Proposition~\ref{5.2}. To show that $R\subseteq S$ has FIP,  we can assume that $R$ is a field by using Proposition~\ref{SUP}(2). Indeed, we may assume that $(R:S)=0$ and that $R$ is local. Now $R\subseteq S$ has property FIP by Theorem~\ref{ET}.  By \cite[Definition 1, Ch. 5, p.28]{Bki A}, there is a base change $R \to L$, where $L$ is a field extension of $R$, such that $L\otimes_R S \cong L^n$. Then $n$ is necessarily $ \mathrm{dim}_R(S)$.  The result follows from  Proposition~\ref{1.5} and \cite[Lemma 2.1]{DPP3}, since the base change $R\to L$ is faithfully flat.
  The last statement is a consequence of \cite[Theorem 3.6]{DPP2}.
\end{proof}

 We first offer two results, showing that the involved morphisms do be unramified when $R \subset S$ is seminormal.

\begin{proposition}\label{5.5}  Let $R\subset S$ be a seminormal and infra-integral finite Gilmer extension. Then $R\subset S$ is unramified and has FIP. 
\end{proposition}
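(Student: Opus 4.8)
The plan is to strip the extension down to a trivial \'etale cover $R\subset R^n$ by a sequence of standard reductions, exploiting the fact that the combination ``seminormal $+$ zero-dimensional nucleus'' forces the base ring, after quotienting by the conductor, to be a field. All the work is then done by the results already proved for diagonal extensions.

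First I would eliminate the conductor. Since $(R:S)$ is a common ideal of $R$ and $S$, Lemma~\ref{Omega} gives $\Omega(S|R)\cong\Omega\big((S/(R:S))\,|\,(R/(R:S))\big)$, so the unramified property is unaffected by passing to $R/(R:S)\subseteq S/(R:S)$; the FIP property is likewise unaffected (as recalled at the start of Section~1), and seminormality, infra-integrality, finiteness and the Gilmer property all descend to this quotient, whose base ring is the Artinian nucleus and whose conductor is $0$. Hence I may assume $(R:S)=0$ with $R$ Artinian. Next I localize: the FIP property can be tested at the finitely many maximal ideals of $\mathrm{MSupp}(S/R)$ by Proposition~\ref{SUP}(2), while the unramified property is local on $\mathrm{Spec}(S)$ via Property~UN, so I reduce to $(R,M)$ local Artinian with zero conductor.

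The key step is then to observe that seminormality forces $R$ to be a field. Indeed, the nucleus is now zero-dimensional, so Proposition~\ref{5.2} applies and shows that $S=S/(R:S)$ is reduced; since $R\subseteq S$, the ring $R$ is reduced, Artinian and local, hence a field. With $R$ a field, the finite reduced $R$-algebra $S$ is a finite product of finite field extensions of $R$, and infra-integrality makes every residual extension an isomorphism, so $S\cong R^n$ for some integer $n$. By Proposition~\ref{1.5} the diagonal extension $R\subseteq R^n$ is a trivial \'etale cover, in particular unramified, and by Theorem~\ref{ET} (equivalently, by Proposition~\ref{1.5}, $R$ being an FMIR) it has FIP. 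Reassembling through Proposition~\ref{SUP}(2) for FIP and through the local nature of $\Omega(S|R)$ for the unramified property recovers both conclusions for the original extension.

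The main obstacle is bookkeeping rather than ingenuity: one must check at each reduction that \emph{every} hypothesis is genuinely preserved, and in particular that seminormality and infra-integrality survive both the passage to $R/(R:S)$ and the localizations at the maximal ideals of the support. The conceptual heart, once these reductions are in place, is the recognition that seminormality together with a zero-dimensional nucleus is exactly the input Proposition~\ref{5.2} needs to collapse the base to a field; after that, the identification $S\cong R^n$ and the cited results on trivial covers finish the argument with no further computation.
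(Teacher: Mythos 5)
Your proof is correct and takes essentially the same route as the paper's: reduce to a local Artinian base, use seminormality (radicality of the conductor in $S$) to identify the extension modulo its conductor with a diagonal extension $k\subseteq k^n$ over a field, and conclude by Proposition~\ref{1.5}/Theorem~\ref{ET} together with Lemma~\ref{Omega} and Proposition~\ref{SUP}(2). The only difference is packaging: you kill the conductor first and invoke Proposition~\ref{5.2} to collapse $R$ to a field, whereas the paper localizes first and derives $(R:S)=M=P_1\cap\cdots\cap P_n$ directly from the radical-conductor property, which is the same argument that underlies Proposition~\ref{5.2}.
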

\begin{proof} We can suppose that $(R,M)$ is a local ring since $|\mathrm{Supp}(S/R)|< \infty$. Because $R\subseteq S$ is seminormal,  $(R:S)$ is a semi-prime ideal in $S$.
It follows that $(R:S)= M$ and also $(R:S) = P_1\cap\cdots \cap P_n$, where $P_1,\ldots, P_n$ are the maximal ideals of $S$ containing $(R:S)$. Since $R\subseteq S$ is infra-integral,  $R/(R:S)\subseteq S/(R:S)$ identifies to $R/M \subseteq (R/M)^n$. Since this last extension is \'etale and has FIP, then $R\subseteq S$ is unramified by Lemma~\ref{Omega} and has FIP, because $\mathrm{Supp}(S/R) = \{M\}$.
\end{proof}

\begin{corollary}\label{5.6}  Let $R\subset S$ be a seminormal finite unramified Gilmer extension. Then $R\subseteq \substack{t\\ S}R$  and $\substack{t\\ S}R \subseteq S$ are unramified and have FIP. They are \'etale if $R \subset S$ is \'etale.
\end{corollary}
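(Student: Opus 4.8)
The plan is to use the seminormality hypothesis to collapse the canonical decomposition of $R \subset S$, and then dispatch each of the two resulting steps to an earlier result: the infra-integral step to Proposition~\ref{5.5} and the $t$-closed step to Proposition~\ref{2.3}.

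First I would note that since $R \subseteq S$ is seminormal and $\substack{+\\ S}R$ is by definition the smallest $C \in [R,S]$ with $C \subseteq S$ seminormal, we have $\substack{+\\ S}R = R$. Writing $T := \substack{t\\ S}R$, the canonical decomposition of $R \subseteq S$ is therefore $R = \substack{+\\ S}R \subseteq T \subseteq S$. I would then check that the lower step $R \subseteq T$ is seminormal and infra-integral. Infra-integrality is immediate, since $T = \substack{t\\ S}R$ is the greatest subalgebra with $R \subseteq T$ infra-integral. For seminormality I would use the compatibility of seminormalization with intermediate rings, namely $\substack{+\\ T}R = \substack{+\\ S}R \cap T = R \cap T = R$, so that $R$ is seminormal in $T$. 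Since subextensions of a finite Gilmer extension are again finite Gilmer, $R \subseteq T$ is a seminormal and infra-integral finite Gilmer extension, and Proposition~\ref{5.5} gives at once that it is unramified and has FIP.

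Next I would treat the upper step $T \subseteq S$. It is $t$-closed by the very definition of the $t$-closure, and it is unramified by Proposition~\ref{DUN} applied to the unramified extension $R \subset S$ with the intermediate ring $T$. As $T \subseteq S$ is also a finite Gilmer extension (integral, being finite), Proposition~\ref{2.3} applies and yields the FIP property. Finally, for the \'etale statement, assuming $R \subset S$ is \'etale I would invoke Lemma~\ref{Seppro}(3) with the intermediate unramified extension $R \subseteq T$: it gives that both $R \subseteq T$ and $T \subseteq S$ are \'etale, which is exactly the last assertion.

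The one delicate point, and the main obstacle, is the verification that the lower step $R \subseteq T$ is seminormal, since seminormality does not in general descend to subextensions. The argument hinges on the identity $\substack{+\\ T}R = \substack{+\\ S}R \cap T$ (equivalently, on the standard fact, already used in the proof of Theorem~\ref{CANMIN}, that the middle step $\substack{+\\ S}R \subseteq \substack{t\\ S}R$ of any canonical decomposition is both seminormal and infra-integral). Everything else is bookkeeping, matching each step of the collapsed tower $R \subseteq T \subseteq S$ to the relevant prior proposition.
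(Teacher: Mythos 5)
Your proof is correct and follows essentially the same route as the paper's: the lower step $R\subseteq \substack{t\\ S}R$ is dispatched to Proposition~\ref{5.5}, the upper step $\substack{t\\ S}R\subseteq S$ to Proposition~\ref{DUN} and Proposition~\ref{2.3} after checking the Gilmer condition, and the \'etale claim to Lemma~\ref{Seppro}(3). The only (cosmetic) difference is that the point you flag as the main obstacle --- seminormality of $R\subseteq \substack{t\\ S}R$ --- is immediate from the paper's definition, since any $b\in \substack{t\\ S}R\subseteq S$ with $b^2,b^3\in R$ already lies in $R$, so no seminormalization identity is actually needed.
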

\begin{proof} The statement about $R\subset \substack{t\\ S}R$ follows from Proposition~\ref{5.5}. To prove the statement for $\substack{t\\ S}R \subset S$, it is enough to show that  this extension is a Gilmer extension, thanks to Proposition~\ref{2.3}. But the Gilmer condition follows from $(R:S)\subseteq (\substack{t\\ S}R :S)$
and the finiteness of $R\subseteq \substack{t\\ S}R $. For the last property, use  Lemma~\ref{Seppro}.
\end{proof}

\begin{theorem}\label{5.7}  Let $R\subseteq S$ be an \'etale extension. The following statements hold:

\begin{enumerate}

\item  $R\subseteq S$ has FIP,  is seminormal and finite  if and only if its nucleus is Artinian and reduced.

\item $R\subseteq S$ has FIP, is finite  and  and its nucleus is reduced if and only if $R\subseteq S$ is a seminormal Gilmer extension.
\end{enumerate}
\end{theorem}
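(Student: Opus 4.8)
The plan is to prove (1) first and then read off (2) from it, using throughout two structural facts from the excerpt: an \'etale Gilmer extension is automatically finite and projective (Corollary~\ref{2.5}), and the FIP property transfers back and forth between $R\subseteq S$ and its nucleus $R/(R:S)\subseteq S/(R:S)$. The key reduction is that, since $(R:S)$ is an ideal common to $R$ and $S$, one has $S/(R:S)=S\otimes_R\bigl(R/(R:S)\bigr)$, so the nucleus extension is itself \'etale; this lets me feed reducedness of the base into Proposition~\ref{RED}(1) to control reducedness of $S/(R:S)$, which is exactly the quantity that Proposition~\ref{5.2} ties to seminormality.

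For the direct implication of (1), assume $R\subseteq S$ has FIP, is seminormal and finite. Finiteness makes the extension integral, so FIP forces FCP, and Proposition~\ref{SUP}(3) then shows the nucleus is Artinian. To see the nucleus is reduced I would invoke Proposition~\ref{2.2}: seminormality of the integral FIP extension is equivalent to $(R:S)$ being semiprime in $S$, so any $r\in R$ with $r^{n}\in(R:S)$ already lies in $(R:S)$, i.e.\ $R/(R:S)$ has no nonzero nilpotents. For the converse, assume the nucleus is Artinian and reduced; then $R\subseteq S$ is an \'etale Gilmer extension, hence finite by Corollary~\ref{2.5}. The base-changed extension $R/(R:S)\subseteq S/(R:S)$ is \'etale over a reduced ring, so $S/(R:S)$ is reduced by Proposition~\ref{RED}(1). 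Since the nucleus is zero-dimensional, Proposition~\ref{5.2} yields seminormality of $R\subseteq S$, while Proposition~\ref{5.4}, applied to the Artinian reduced base $R/(R:S)$, yields FIP of the nucleus and hence of $R\subseteq S$.

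Part (2) is then largely bookkeeping against (1). For the direct implication, FIP together with finiteness again forces an Artinian nucleus as above, so with the reducedness hypothesis the nucleus is Artinian and reduced; part (1) supplies seminormality, and an Artinian nucleus is precisely the Gilmer condition. For the converse, a seminormal Gilmer \'etale extension is finite by Corollary~\ref{2.5}, and Proposition~\ref{5.2} converts seminormality into reducedness of $S/(R:S)$, whence its subring $R/(R:S)$ is reduced; now having an Artinian reduced nucleus, part (1) returns FIP.

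The step I expect to demand the most care is the converse half of (1): one must correctly recognize the nucleus extension as a base change in order to transport reducedness through Proposition~\ref{RED}(1), and then align the zero-dimensionality hypothesis of Proposition~\ref{5.2} with the Artinian hypothesis of Proposition~\ref{5.4}. Once those two are matched, the remaining implications are routine consequences of the equivalences between finiteness, integrality, FCP and the Gilmer condition.
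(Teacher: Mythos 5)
Your proposal is correct and follows essentially the same route as the paper: both parts are reduced to the nucleus extension (Artinian nucleus via FIP $\Rightarrow$ FCP $\Rightarrow$ Gilmer, reduced nucleus via the semiprime/radical conductor characterization of seminormality), and (2) is then read off from (1). The only differences are cosmetic choices of internal citations --- where you chain Proposition~\ref{RED}(1), Proposition~\ref{5.2} and Proposition~\ref{5.4} for the converse of (1), the paper invokes Theorem~\ref{2.7} directly (your version is in fact slightly more explicit at the two points where the paper is terse: justifying that the nucleus is Artinian, and that seminormality actually follows).
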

\begin{proof}

(1) Assume that $R/(R:S)$ is Artinian and reduced. In that case $R/(R:S)\subseteq S/(R:S)$ is finite and \'etale. In view of Theorem~\ref{2.7}, this extension has FIP and is seminormal  and finite. The same properties hold for $R\subseteq S$.

Conversely, Assume that $R\subseteq S$ has FIP and is seminormal and finite. We know that $R\subseteq S$ is a Gilmer extension, because it is finite. Moreover, the seminormality of $R\subseteq S$ entails that $(R:S)$ is semi-prime in $S$ \cite[Lemma 4.8]{DPP2}, whence in $R$.

(2) If $R\subseteq S$ has FIP and is finite, then $R/(R:S)$ is Artinian, which gives by (1) that $R\subseteq S$ is seminormal when moreover, $R/(R:S)$ is reduced.

Conversely, the seminormality of $R\subseteq S$ entails that $R/(R:S)$ is reduced since $(R:S)$ is a radical ideal. It follows by (1) that $R\subseteq S$ has FIP when $R/(R:S)$ is Artinian.
\end{proof}

Consider the four conditions of  Theorem~\ref{5.7}: 

(a): finite FIP, (b): Gilmer, (c): seminormal, and (d): reduced nucleus.

 We proved that (a )$\land$ (c) $\Leftrightarrow$ (b) $\land$ (d), and  (a) $\land$ (d) $\Leftrightarrow$ (b) $\land$ (c).  In fact, (a) $\Rightarrow $ (b) (a finite FIP extension is Gilmer), and (c) $\Rightarrow$ (d) (a seminormal extension has a reduced nucleus). But ignoring these two results,  the examples below show that these four conditions   are logically independent. 

(b) $\not\Rightarrow$ (a) and (d) $\not\Rightarrow$ (a): Take $K\subset K[X]$, where $K$ is a field and $X$ an indeterminate.

(a) $\not\Rightarrow$ (c), (b) $\not\Rightarrow$ (c) and (d) $\not\Rightarrow$ (c): Take $R\subset S$ minimal ramified. 

(c) $\not\Rightarrow$ (b) and (d) $\not\Rightarrow$ (b): Let $R$ be a PID and set $S:=R_p$, where $p$ is a prime element.

(a) $\not\Rightarrow$ (d): Take $R\subset R^2$, where $R$ is a SPIR.

(b) $\not\Rightarrow$ (d): Let $K$ be a field. Take $R:=K[X]/(X^2)$ and $S:=R[Y]$ and $X,Y$  indeterminates.

(c) $\not\Rightarrow$ (a): Let $R:= \mathbb{Z}/2\mathbb{Z}$  and $\{X_i\}$ an infinite family of indeterminates. Set $S:=R[X_i]/(X_i^2-X_i,X_iX_j)$.  

As a consequence of the above theorem, we see that a seminormal 
integral \'etale extension has FIP if and only if it has FCP (use \cite[Theorem 4.2]{DPP2}). Moreover, it follows from Theorem~\ref{2.7} that a unramified extension $R\subseteq S$, whose conductor is an intersection of finitely many maximal ideals of $R$, has FIP. We intend now to look at unramified extensions that have FIP. To begin with we consider integral minimal extensions.

We  give three  lemmata, before giving another characterizations of seminormal  \'etale extensions having FIP.

\begin{lemma} \label{Artun} Let $R\to S$ be a finite ring morphism with finite separable residual extensions, such that $R$  is Artinian reduced and $S$ is reduced. Then $R\to S$ is \'etale. 
\end{lemma}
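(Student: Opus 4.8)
The plan is to verify the three ingredients of étaleness—flatness, finite presentation, and the unramifiedness criterion UN—one at a time, exploiting that a reduced Artinian ring is a finite product of fields. First I would record the structural facts. Since $R$ is Artinian and reduced, every prime of $R$ is maximal and each localization $R_P=\kappa(P)$ is a field, so $PR_P=0$. Because $R\to S$ is finite with $R$ Artinian, $S$ is Artinian as well; as $S$ is reduced, every localization $S_Q$ at $Q\in\mathrm{Spec}(S)$ is a reduced Artinian local ring, hence a field, and therefore $S_Q=\kappa(Q)$ and $QS_Q=0$.

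Flatness and finite presentation come almost for free. An $R$-module over the finite product of fields $R$ decomposes along the factors into vector spaces (by Remark~\ref{LOCPROD}), hence is free; in particular $S$ is a flat (indeed projective) $R$-module. Finite presentation of the algebra $R\to S$ then follows from finiteness together with the Noetherianity of $R$: a finite-type algebra over a Noetherian ring is of finite presentation.

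For unramifiedness I would check Property~UN directly. Fix $Q\in\mathrm{Spec}(S)$ and put $P:=Q\cap R$. By the first paragraph $PS_Q=0$ since $PR_P=0$, while $QS_Q=0$ since $S_Q$ is a field; thus $PS_Q=QS_Q$, giving the first half of UN. The residual extension $\kappa(P)\to\kappa(Q)$ is finite and separable by hypothesis, giving the second half. Since $R\to S$ is of finite type, UN forces it to be unramified. Combining with flatness and finite presentation yields that $R\to S$ is étale.

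The only real subtlety—and the place where the reducedness of $S$ is indispensable—is the identity $PS_Q=QS_Q$ in UN: the left-hand side vanishes automatically because $P$ is maximal and $R_P$ is a field, so the criterion can only hold if $QS_Q=0$ as well, i.e.\ if $S_Q$ is a field. This is precisely guaranteed by $S$ being reduced (and Artinian). I therefore expect no genuine obstacle beyond carefully bookkeeping these local reductions; the statement is essentially the assembly of the local structure of $R$ as a product of fields with the hypothesis of separable residue extensions.
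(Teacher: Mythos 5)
Your proof is correct, and its overall skeleton matches the paper's: both arguments exploit that a reduced Artinian ring is a finite product of fields to get flatness (the paper says ``$R$ is absolutely flat and Noetherian'', you decompose modules along the idempotents), both get finite presentation from finiteness plus Noetherianity, and both reduce unramifiedness to the separability hypothesis. Where you genuinely diverge is in \emph{which} criterion for unramifiedness you verify: the paper localizes at primes $P$ of $R$, identifies $S_P$ as a finite product of finite separable field extensions of $\kappa(P)$, and concludes $\Omega(S|R)_P\cong\Omega(S_P|R_P)=0$ for all $P$, hence $\Omega(S|R)=0$; you instead localize at primes $Q$ of $S$ and check Property UN directly, observing that $PS_Q=0=QS_Q$ because $R_P$ and $S_Q$ are fields. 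Your route is slightly more elementary in that it avoids the Kähler-differential machinery and the fact that $\Omega$ commutes with localization, using only the UN characterization that the paper records for finite-type algebras; the paper's route is more compact and makes visible exactly where reducedness of $S$ enters (in the local structure of $S_P$), whereas in your version it enters through $S_Q$ being a field. One small inaccuracy to fix: a module over a finite product of fields decomposes into vector spaces over the factors and is therefore \emph{projective}, but not necessarily \emph{free} (e.g.\ $K\times 0$ over $K\times K'$); since you only need flatness, this does not affect the argument.
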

\begin{proof} Since $R$ is absolutely flat and Noetherian, we get that $R\to S$ is flat and $R\to S$ is of finite presentation. Let $P$ be a prime ideal of $R$, then $R_P$ is a field and then $S_P$ is Artinian, whence $S_P\cong K_1\timesÊ\cdots \times K_n$ a product of fields that are finite extensions of $\kappa (P)$. The hypotheses show that these field extensions are separable. Therefore, $0=\Omega(S_P|R_P) \cong \Omega(S|R)_P $ for each $P$ \cite[Proposition 11, p.34]{R}. It follows that $\Omega (S|R)=0$ and $R\to S$ is unramified, whence \'etale. 
\end{proof}

\begin{lemma}\label{unra} Let $R\subset S$ be a seminormal, integral extension which has FIP. Then any minimal morphism $A\subset B$ which appears in a decomposition into minimal morphism of $R\subseteq S$ is either inert or decomposed, whence unramified in case $R\subset S$ is \'etale.
\end{lemma}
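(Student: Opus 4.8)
The plan is to reduce to the situation where $S$ is a finite product of fields, in which seminormality becomes transparent, and then to read off the type of each minimal step from the reducedness of the intermediate rings. First I would use that an integral extension with FIP is finite with Artinian nucleus (Corollary~\ref{SUPF}), so that, after replacing $R\subseteq S$ by $R/(R:S)\subseteq S/(R:S)$ — which preserves FIP, seminormality and the type of every minimal subextension, since $(R:S)\subseteq A\subseteq B$ forces $(A:B)\supseteq(R:S)$ — I may assume $(R:S)=0$ and $R$ Artinian. Seminormality then makes $(R:S)=0$ a radical ideal of $S$, so $S$ is reduced; being finite over the Artinian ring $R$, it is Artinian, hence a finite product of fields. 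The same reasoning applies to every $B\in[R,S]$: it is finite over $R$ and reduced, hence itself a finite product of fields, so that every ideal of $B$ is semiprime.

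Next I would treat a minimal step $A\subset B$ occurring in a decomposition of $R\subseteq S$, with conductor $M:=(A:B)$. By Theorem~\ref{MIN} this step is inert, decomposed, or ramified, and in the ramified case $B/M\cong(A/M)[X]/(X^2)$ carries a nonzero nilpotent. But $B$ is a product of fields, so $B/M$ is reduced; this contradiction rules out the ramified case and leaves $A\subset B$ inert or decomposed. Equivalently, one may phrase this via Proposition~\ref{2.2}: each $A\subseteq B$ is an integral FIP extension which is seminormal, so $(A:B)$ is semiprime in $B$, whence $A\subset B$ cannot be ramified.

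Finally, for the \'etale case I would invoke Proposition~\ref{MOmega}: a decomposed minimal extension is always unramified, so only the inert steps require an argument. For an inert step $A\subset B$ the residual extension is $A/M\subseteq B/M$, with $M$ maximal in both rings. Choosing $Q\in\mathrm{Spec}(S)$ over $M$ and setting $P:=Q\cap R$, Property UN for the unramified extension $R\subset S$ makes $\kappa(P)\subseteq\kappa(Q)$ a finite separable field extension; since $\kappa(P)\subseteq A/M\subseteq B/M\subseteq\kappa(Q)$, the permanence properties of separable field extensions give that $A/M\subseteq B/M$ is separable, and Proposition~\ref{MOmega} then yields that the inert step is unramified as well.

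The main obstacle is the passage from seminormality of $R\subseteq S$ to control over an \emph{arbitrary} intermediate step $A\subset B$, that is, the ``upper stability'' of seminormality, which is not a formal consequence of the definition. The reduction to a product of fields is exactly what dissolves this difficulty, since there every intermediate ring is again a product of fields and hence has all conductors semiprime. A secondary point to handle with care is the residual-separability bookkeeping in the \'etale inert case, where one must genuinely use Property UN and the stability of separability under subextensions of $\kappa(P)\subseteq\kappa(Q)$.
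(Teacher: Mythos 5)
Your proof is correct and takes essentially the same route as the paper: seminormality plus FIP forces the conductor of each minimal step to be a radical ideal of $B$ (because the intermediate rings modulo $(R:S)$ are Artinian reduced, hence finite products of fields), which rules out the ramified case, and Proposition~\ref{MOmega} then gives the \'etale conclusion. The only differences are cosmetic: you reduce to $(R:S)=0$ and argue via reducedness of $B/M$, where the paper keeps the conductor and uses absolute flatness of $B/(R:S)$ to see that $M$ is an intersection of maximal ideals; and you spell out the residual-separability argument (Property UN plus permanence of separability in subextensions of $\kappa(P)\subseteq\kappa(Q)$) for the inert steps, which the paper leaves implicit in its appeal to Proposition~\ref{MOmega}.
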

\begin{proof} We observe that $R\subset S$ is finite because it has FIP.  Let $I$ be the conductor of $R\subset S$ and $M$ the conductor of $A\subset B$. By seminormality, $I$ is semi-prime in $S$ and is therefore a finite intersection of maximal ideals, because $R/I$ and $S/I$ are Artinian reduced. Observe that $I\subseteq M$ in $B$. Then $M$ is in $B$ a finite intersection of maximal ideals $M_1,\ldots, M_n$ because $M/I$ is an ideal of the absolutely flat ring $B/I$. It follows then that $A\subset B$ cannot be ramified, for if not there is a maximal ideal $N$ in $B$ such that $N^2\subseteq M \subset N$, and then $N\in \{ M_1, \ldots,  M_n\}$, an absurdity. The conclusion follows from Proposition~\ref{MOmega}
\end{proof}

Finally, we give a characterization of seminormal \'etale extensions that have FIP. We first rewrite a result of Ferrand-Olivier.

\begin{lemma}\label{5.1} Let $R\subseteq S$ be a ring extension, with conductor $\cap_{i=1}^nM_i,$

\noindent $ M_i\in\mathrm{Max}(R)$ for each $i$. Then $R_{M_i}$ is a field for each $i$ if and only if $R\subseteq S$ is flat.
\end{lemma}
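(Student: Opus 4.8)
The plan is to exploit the fact that flatness is a local property on $\mathrm{Spec}(R)$, while the field condition concerns only the maximal ideals $M_1,\dots,M_n$, and then to reduce everything to the local case, where the conductor becomes the maximal ideal. First I would observe that for a prime $P\notin\{M_1,\dots,M_n\}$ the ideal $(R:S)=\cap_{i=1}^nM_i$ is not contained in $P$, so there is an element $c\in(R:S)\setminus P$; since $cS\subseteq R$ and $c$ is a unit in $R_P$, a short computation (writing $s/u=cs/(cu)$ with $cs\in R$ and $cu\notin P$) shows $S_P\subseteq R_P$, whence $R_P=S_P$. Such primes are therefore harmless: the extension is trivially flat there and they play no role in the field condition. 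Consequently $R\subseteq S$ is flat if and only if $R_{M_i}\subseteq S_{M_i}$ is flat for every $i$, and this is precisely matched by the list of residue fields appearing in the statement.

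Next I would treat the local case. Write $(A,\mathfrak m):=(R_{M_i},M_iR_{M_i})$ and $B:=S_{M_i}$. Localizing the inclusion $(R:S)S\subseteq R$ gives $\mathfrak m=(R:S)_{M_i}\subseteq (A:B)$, and since $(A:B)$ is a proper ideal of $A$ whenever $A\neq B$, maximality of $\mathfrak m$ forces $(A:B)=\mathfrak m$. The easy implication is then immediate: if $A$ is a field, then $B$ is an $A$-vector space, hence free and flat, so $R_{M_i}\subseteq S_{M_i}$ is flat. Combined with the first paragraph, this proves that the field condition implies flatness.

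For the converse I would argue that a flat local extension whose conductor equals the maximal ideal forces $A$ to be a field, which is exactly \cite[Lemme 4.3.1]{FO}; alternatively one reproves it as follows. Since $\mathfrak m=(A:B)$ annihilates $B/A$, the quotient $B/A$ is a vector space over $k:=A/\mathfrak m$, say $B/A\cong k^{(\Lambda)}$ with $\Lambda\neq\emptyset$ because $A\neq B$. Tensoring $0\to A\to B\to B/A\to 0$ with $k$ and using $\mathrm{Tor}_1^A(k,A)=0$ together with flatness $\mathrm{Tor}_1^A(k,B)=0$, the connecting map embeds $\mathrm{Tor}_1^A(k,B/A)$ into $k\otimes_A A=k$; since $1\notin\mathfrak m$ the induced map $k\to B/\mathfrak m B$ is injective, forcing $\mathrm{Tor}_1^A(k,B/A)=0$. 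As $\mathrm{Tor}_1^A(k,k)\cong\mathfrak m/\mathfrak m^2$ and $B/A$ is a nonzero free $k$-module, this yields $\mathfrak m=\mathfrak m^2$, and Nakayama gives $\mathfrak m=0$, i.e.\ $A$ is a field.

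The main obstacle I anticipate is the bookkeeping in the reduction step: one must ensure that each $M_i$ genuinely lies in $\mathrm{Supp}(S/R)$, so that $A\neq B$ and $(A:B)=\mathfrak m$ rather than $(A:B)=A$; and in a non-Noetherian setting one must justify the final Nakayama step, where $\mathfrak m$ need not be finitely generated. Invoking \cite[Lemme 4.3.1]{FO} for the local case cleanly sidesteps this last point, which is presumably why the statement is phrased as a rewriting of the Ferrand--Olivier result.
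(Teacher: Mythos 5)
Your proposal is correct and is essentially the paper's own proof: reduce to the localizations $R_{M_i}\subseteq S_{M_i}$ (primes outside $\{M_1,\ldots,M_n\}$ being harmless), get one direction from freeness of a vector space over a field, and settle the converse by \cite[Lemme 4.3.1]{FO}, which is exactly what the paper does; your Tor-theoretic reproof is an optional extra whose Nakayama step you rightly avoid by citing Ferrand--Olivier. The point you flag but leave open --- that each $M_i$ indeed lies in $\mathrm{Supp}(S/R)$, so that $R_{M_i}\neq S_{M_i}$ and the localized conductor is $M_iR_{M_i}$ --- is glossed over in the paper's proof as well; to close it, note that $S/R$ is a module over $R/(R:S)\cong\prod_{j=1}^nR/M_j$, so $(S/R)_{M_i}=0$ would force $\bigcap_{j\neq i}M_j$ to annihilate $S/R$, giving $\bigcap_{j\neq i}M_j\subseteq(R:S)=\bigcap_{j=1}^nM_j\subseteq M_i$, which is impossible for distinct maximal ideals.
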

\begin{proof}  As $\mathrm{Supp}(S/R) = \{M_1,\ldots,M_n \}$, if  $R_{M_i}$ is a field for each $i$, then $R\subseteq S$ is flat. Conversely if $R\subseteq S$ is flat, so is  $R_{M_i}\subseteq S_{M_i}$ with conductor $M_iR_{M_i}$ for each $i$. To complete the proof use \cite[Lemme 4.3.1]{FO}.
\end{proof}

\begin{theorem}\label{5.10} Let $R\subset S$ be an extension of finite presentation. The following statements are equivalent.

\begin{enumerate}

\item $R\subset S$ is a  seminormal  \'etale extension, having FIP;

\item  $R\subset S$ has finite separable residual field extensions and is finite,  $\mathrm{MSupp}(S/R)$ is finite,  $R':= R_{(R:S)} $ is a reduced  Artinian ring and $S':= S_{(R:S)}$ is reduced;

\item same conditions as in $\mathrm{(2)}$,  except that $S'$ is reduced  is replaced with $(R':S')= 0$.
\end{enumerate}

If either $\mathrm{(1)}$, $\mathrm{(2)}$ or $\mathrm{(3)}$  is verified, then each minimal morphism $A \subset B$ with conductor $\mathcal{C}$, appearing in the canonical decomposition of $R \subset S$ is such that $A_{(A:B)}=A_{\mathcal{C}}$ is a field.

\end{theorem}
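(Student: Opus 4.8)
The plan is to reduce the whole statement to the semi-local extension $R'\subseteq S'$, where $R':=R_{(R:S)}$ and $S':=S_{(R:S)}$, and then to read off the three conditions from Theorem~\ref{5.7} together with Lemma~\ref{Artun}. The standing hypothesis that $R\subset S$ is of finite presentation is exactly what legitimizes this reduction for the \'etale property: since $R'$ is semi-local, hence an $\mathcal S$-ring, Remark~\ref{LOCAL}(2) adds ``finite and \'etale'' to the list of properties in Proposition~\ref{PROPN} shared by $R\subseteq S$ and $R'\subseteq S'$, while finite, FIP, flat, unramified and seminormal are already transferred by Proposition~\ref{PROPN} itself. In each of (1), (2), (3) the set $\mathrm{MSupp}(S/R)$ is finite (assumed in (2), (3), and in (1) coming from $\mathrm{FIP}\Rightarrow|\mathrm{Supp}(S/R)|<\infty$ via Proposition~\ref{SUP}(1)), so the localization is always defined, and by Lemma~\ref{CONZERO} a finite flat (in particular \'etale) extension has $(R':S')=0$ with $R'\cong R/(R:S)$ equal to the nucleus.

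First I would prove (1)$\Rightarrow$(2). \'Etale yields flat, of finite presentation, unramified, and by Property UN finite separable residual field extensions; FIP yields $|\mathrm{Supp}(S/R)|<\infty$ and, the extension being unramified with FCP, finiteness with Artinian nucleus (Proposition~\ref{GUF} and Proposition~\ref{SUP}(3)). Seminormality makes $(R:S)$ radical, hence the nucleus reduced. By Lemma~\ref{CONZERO} then $R'\cong R/(R:S)$ is reduced Artinian and $(R':S')=0$; since $R'\subseteq S'$ is \'etale over the reduced ring $R'$, Proposition~\ref{RED}(1) gives $S'$ reduced. This is exactly (2).

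Next, (2)$\Rightarrow$(3) is the easy direction: with $R'$ reduced Artinian, $S'$ reduced, the morphism finite and the residual extensions finite separable, Lemma~\ref{Artun} yields that $R'\subseteq S'$ is \'etale, whence flat, and Lemma~\ref{CONZERO} (using $\mathrm{MSupp}(S'/R')=\mathrm{Max}(R')$, so that $R'_{(R':S')}=R'$) gives $(R':S')=0$. The delicate implication is (3)$\Rightarrow$(1): from $(R':S')=0$ together with $R'$ reduced Artinian and separable residual extensions I must recover that $R'\subseteq S'$ is \'etale, equivalently (Proposition~\ref{5.2}, whose nucleus $R'$ is zero-dimensional) that $S'$ is reduced. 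This is the main obstacle, because the vanishing of the conductor is by itself a weak hypothesis; the argument has to use the reducedness of the base together with separability of all residual extensions, decomposing $R'=\prod K_i$ into fields and controlling each finite fibre $S'\otimes_{R'}K_i$ so as to exclude nilpotents. Once $S'$ is shown reduced, Lemma~\ref{Artun} makes $R'\subseteq S'$ \'etale, Proposition~\ref{5.2} makes it seminormal, and pulling back through Remark~\ref{LOCAL}(2) and Proposition~\ref{PROPN} returns that $R\subseteq S$ is \'etale and seminormal; FIP then follows from Theorem~\ref{2.7} (equivalently Theorem~\ref{5.7}), the nucleus being reduced Artinian.

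Finally, for the supplementary claim I would argue as follows. Once (1) holds, $R\subset S$ is seminormal, so $\substack{+\\ S}R=R$ and by Theorem~\ref{CANMIN} (or Lemma~\ref{unra}) every minimal morphism $A\subset B$ occurring in the canonical decomposition is inert or decomposed, in particular unramified, and its conductor $\mathcal C=(A:B)$ is a maximal ideal of $A$ lying over a prime $\mathfrak p:=\mathcal C\cap R\in\mathrm{Supp}(S/R)$. Because the nucleus of $R\subset S$ is reduced Artinian, $R_{\mathfrak p}$ is a field, and $S_{\mathfrak p}$ is reduced (localization of the reduced ring $S/(R:S)$ at a point of the support); hence the intermediate ring $A_{\mathfrak p}$, a finite algebra over the field $R_{\mathfrak p}$ lying inside the reduced ring $S_{\mathfrak p}$, is a reduced Artinian ring, i.e.\ a finite product of fields. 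Localizing at the maximal ideal $\mathcal C$ then gives that $A_{(A:B)}=A_{\mathcal C}$ is a field, as asserted. Alternatively, each such $A\subset B$ being flat with conductor the single maximal ideal $\mathcal C$, Lemma~\ref{5.1} gives the same conclusion at once.
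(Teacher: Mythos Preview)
Your argument for $(1)\Rightarrow(2)$ and $(2)\Rightarrow(3)$ is correct and in fact more direct than the paper's. The paper cycles $(1)\Rightarrow(3)\Rightarrow(2)\Rightarrow(1)$; for $(1)\Rightarrow(3)$ it does not use Lemma~\ref{CONZERO} but instead decomposes $R\subset S$ into minimal subextensions, shows each is unramified via Lemma~\ref{unra}, then applies Lemma~\ref{Seppro}(3) inductively so that every intermediate step is \'etale, and concludes via Lemma~\ref{5.1} that each $R_M$ is a field, whence $R'$ is reduced Artinian. Your route through seminormality $\Rightarrow (R:S)$ radical $\Rightarrow$ nucleus reduced, together with Lemma~\ref{CONZERO}, achieves the same thing without the chain decomposition. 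Your treatment of the supplementary claim (first approach, via $R_{\mathfrak p}$ a field and $A_{\mathfrak p}$ a reduced finite $R_{\mathfrak p}$-algebra) is also correct; note that your alternative via Lemma~\ref{5.1} implicitly needs $A\subset B$ flat, which is exactly what the paper obtains from the inductive use of Lemma~\ref{Seppro}(3).

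Where you leave a genuine gap is $(3)\Rightarrow(1)$: you isolate correctly that one must show $S'$ reduced, but you do not prove it, only announcing that one should ``control each finite fibre so as to exclude nilpotents.'' The paper's argument at the corresponding step $(3)\Rightarrow(2)$ is: since $R'$ is absolutely flat, $R'\subseteq S'$ is seminormal; hence $(R':S')$ is a radical ideal of $S'$, and being $0$ this forces $S'$ reduced. Your instinct that this is ``the main obstacle, because the vanishing of the conductor is by itself a weak hypothesis'' is well founded: take $R=K$ a field and $S=K[X]/(X^2)$. This is a finite extension of finite presentation with trivial (hence separable) residual extension; $\mathrm{MSupp}(S/R)=\{(0)\}$, $R'=K$ is reduced Artinian, and $(R':S')=(K:K[\varepsilon])=0$, so condition~(3) holds. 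Yet $S'=K[\varepsilon]$ is not reduced and the extension is neither seminormal nor \'etale, so (1) and (2) fail. In particular the implication ``$R'$ absolutely flat $\Rightarrow R'\subseteq S'$ seminormal'' is false for this example. So the step you could not complete appears to be genuinely uncompletable with the stated hypotheses; the difficulty you flagged is a real one, not a defect of your argument.
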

\begin{proof} Suppose that (1) holds. We observe that  $R\subseteq S$ is finite and $\mathrm{Supp}(S/R)$ is finite because it has FIP.   Write a decomposition $R\subset \cdots \subset A \subset B \subset \cdots \subset S$ of $R\subset S$, where $A\subset B$ is minimal. Observe that $R\subset A$ is unramified  by Lemma~\ref{unra} because a composite of unramified ring morphism is unramified. Then apply inductively Lemma~\ref{Seppro}(3).  By using Lemma~\ref{5.1}, we get that $A_{\mathcal C}$ is a field for $\mathcal C= (A:B)$. In particular $R_M$ is a field  for  $M\in \mathrm{MSupp}(S/R)$. The ring $R'$ is  semi-local  with maximal ideals $M'_1, \ldots,M'_n$ that are the minimal prime ideals of $R'$.  Since $R'\to \prodÊ\, [ R'_{M'_i} \, | i= 1\ldots, n]$ is injective and its target is a product of fields, $R'$ is reduced and  is therefore  Artinian by the Chinese Remainder Theorem. In fact, $\dim(R')=0$, and $R'$ reduced give that $R'\cong\prod_{i=1}^nR'_{M'_i}$.  

By seminormality $(R':S')  $ is semiprime in $S'$ and then in $R'$. Also $(R':S' )\subseteq \mathrm{Rad}(R') =\mathrm{Nil}(R')= 0$. 
To complete the proof, we see that $R\subset S$ has finite separable residual extensions, because $R\subset S$ is \'etale. So (3) holds. 

If (3) holds then (2) holds because the absolute flatness of  $R'$  implies that  $R'\subseteq S'$ is seminormal. Therefore, its conductor is semiprime  in $S'$ and zero, so that $S'$ is reduced. 

To end, assume that (2) is valid.  We clearly  get that $R'\subseteq S'$ is seminormal and flat since $R'$ is absolutely flat. It follows that $R\subseteq S$ is seminormal and flat. Now $R \subseteq S$ is \'etale. It is enough to prove that $R'\subseteq S'$ is \'etale.
 
The conclusion follows from Lemma~\ref{Artun} and Remark~\ref{LOCAL}(2).  Finally,  $R'\subseteq S'$ has FIP by  Theorem~\ref{2.7}. We deduce that $R\subseteq S$ has FIP. Then, (1) holds.
\end{proof}

Observe that if $R\subset S$ is infra-integral, the separability hypotheses on residual extensions are verified.  Actually, we need only hypotheses of separability on residual extensions of $ \substack{t\\ S}R \subseteq S$. A minimal extension $A\subset B$ with conductor $M$ appearing in this last extension is inert, so that $A/M\subset B/M$ is the only residual extension of $A\subset B$  that may not  be separable. Since it is a  minimal extension of fields, it is either separable of purely inseparable. 

 Moreover,  the separability  of a residual extension $\kappa (P) \subseteq \kappa (Q)$ is verified if $(R:S) \nsubseteq Q$  since this extension is an isomorphism. 

  Also, reduction hypotheses are valid when $S$ is reduced.  They are also valid in case $R\subset S$  is \'etale
  and $R$ is reduced (Proposition~\ref{RED}). 
  
\begin{remark}\label{} Let $A$ be a ring, $p(X)\in A[X]$ and $B:=A[X]/(p(X))$, with $p(X)$ monic, so that $f:A\to B$ is faithfully flat. It is easy to show, using \cite[Lemma 2.6]{GP2}, that $f$ is infra-integral if and only if $p(X)$ splits in each $\kappa(P)[X]$ for $P\in\mathrm{Spec}(A)$, so that each fiber morphism  $\kappa(P)\to\kappa(P)\otimes_AB$ is of the form $\kappa(P)\to\kappa(P)^n$ for some integer $n$. It follows that  $f$ is \'etale if $f$ is infra-integral. Note that the conductor of $f$ is $0$. Therefore, when $f$ is infra-integral,  $f$ has FIP and is seminormal if and only if $A$ is Artinian reduced. The same result holds for a standard-\'etale algebra of the type $R\to R[X]/(p(X))$ where $p(X) \in R[X]$ is a monic polynomial whose derivative is invertible in $R[X]/(p(X))$.
\end{remark}

\end{document}